\documentclass[letterpaper,11pt]{article}
\usepackage[margin=1in]{geometry}  
\usepackage{setspace}  
\doublespacing

\usepackage{natbib}
 \bibpunct[, ]{(}{)}{,}{a}{}{,}%

\usepackage{amsfonts}
\usepackage{amsthm}
\usepackage{mathrsfs}
\usepackage{amsmath}
\usepackage{amssymb}
\usepackage{latexsym}
\usepackage{indentfirst}
\usepackage{footmisc}
\usepackage{endnotes}
\usepackage{algorithm}
\usepackage{algorithmic}
\usepackage{float}
\usepackage{graphicx}
\usepackage{epstopdf}
\usepackage{longtable}
\usepackage{enumerate}
\usepackage{enumitem}
\usepackage{authblk}
\usepackage{bbm}
\usepackage{booktabs}
\usepackage{url}

\usepackage{multirow}
\usepackage{bm}
\usepackage{slashbox}
\usepackage{array}
\usepackage{subfigure}
\usepackage{caption}
\usepackage{color}

\usepackage{pifont}  


\newcommand{\pfs}{\mathrm{PFS}}
\newcommand{\pcs}{\mathrm{PCS}}

\newcommand{\Erm}{\mathrm{E}}
\newcommand{\Arm}{\mathrm{A}}
\newcommand{\Mrm}{\mathrm{M}}
\def\st{\mathrm{s.t.}}

\def\F{\mathcal{F}}

\def\X{\mathbf{X}}
\def\Y{\mathbf{Y}}

\def\bbf{\mathbf{f}}

\def\x{\mathbf{x}}
\def\z{\mathbf{z}}

\def\uf{\mathrm{f}}

\def\Ccal{\mathcal{C}}

\def\Ncal{\mathcal{N}}
\def\Xcal{\mathcal{X}}

\def\Gcal{\mathcal{G}}
\def\Rcal{\mathcal{R}}

\def\Ucal{\mathcal{U}}
\def\Vcal{\mathcal{V}}

\def\1{\mathbbm{1}}

\def\bbeta{\boldsymbol{\beta}}

\def\alphab{\boldsymbol{\alpha}}

\def\bSigmap{\boldsymbol{\Sigma}^{(p)}}
\def\blam{\boldsymbol{\lambda}}

\def\Dcal{\mathcal{D}}
\def\bepsilon{\boldsymbol{\epsilon}}
\def\btheta{\boldsymbol{\theta}}
\def\dfrak{\tilde{\mathrm{d}}}
\def\sfrak{\tilde{\mathrm{s}}}
\def\v{\mathbf{v}}

\def\PP{\mathbb{P}}
\def\EE{\mathbb{E}}
\def\RR{\mathbb{R}}

\newtheorem{assumption}{ASSUMPTION}
\newtheorem{lemma}{LEMMA}
\newtheorem{remark}{REMARK}
\newtheorem{theorem}{THEOREM}
\newtheorem{proposition}{Proposition}

\allowdisplaybreaks

\newcommand{\argmin}{\arg\min}

\title{A Contextual Ranking and Selection Method for Personalized Medicine}

\author[a]{Jianzhong Du}
\author[b]{Siyang Gao}
\author[c]{Chun-Hung Chen}
\affil[a]{School of Management, Fudan University, Shanghai, China, jianzhodu2-c@my.cityu.edu.hk}
\affil[b]{Department of Systems Engineering and School of Data Science, City University of Hong Kong, Hong Kong, China, siyangao@cityu.edu.hk}
\affil[c]{Department of Systems Engineering \& Operations Research, George Mason University, Fairfax, VA 22030, USA, cchen9@gmu.edu}


\date{\vspace{-8ex}}

\begin{document}

\maketitle

\begin{abstract}
\textbf{Problem definition:} Personalized medicine (PM) seeks the best treatment for each patient among a set of available treatment methods. Since a specific treatment does not work well on all patients, traditionally, the best treatment was selected based on the doctor's personal experience and expertise, which is subject to human errors. In the meantime, stochastic models have been well developed in the literature for a lot of major diseases. This gives rise to a simulation-based solution for PM, which uses the simulation tool to evaluate the performance for pairs of treatment and patient biometric characteristics, and based on that, selects the best treatment for each patient characteristics. \textbf{Methodology/results:} In this research, we extend the ranking and selection (R\&S) model in simulation-based decision making to solving PM. The biometric characteristics of a patient is treated as a context for R\&S, and we call it contextual ranking and selection (CR\&S). We consider two formulations of CR\&S with small and large context spaces respectively and develop new techniques for solving them and identifying the rate-optimal budget allocation rules. Based on them, two selection algorithms are proposed, which can be shown to be numerically superior via a set of tests on abstract and real-world examples. \textbf{Managerial implications:} This research provides a systematic way of conducting simulation-based decision-making for PM. To improve the overall decision quality for the possible contexts, more simulation efforts should be devoted to contexts in which it is difficult to distinguish between the best treatment and non-best treatments, and our results quantify the optimal tradeoff of the simulation efforts between the pairs of contexts and treatments.

\emph{Key words:} personalized medicine, contextual ranking and selection, simulation optimization, OCBA, convergence rate
\end{abstract}




\section{Introduction}
\label{sec:1}

Personalized medicine (PM) (also known as precision medicine or P4 medicine) is an emerging healthcare problem. Benefiting from the advance of medical knowledge and technology, patients usually have access to a set of competing and sometimes complementary medical treatment methods for their diseases. However, the treatment used for a patient should be carefully chosen, because the effectiveness of the treatment might heavily depend on the patient's biometric characteristics. For instance, the highly active antiretroviral therapy (a standard treatment for AIDS) has substantially different profiles in efficacy and toxicity across subgroups of patients, influenced by the virus level at the time of receiving treatments and the gender and behavior pattern of the patient \citep{cai2011}. In this research, we call such biometric characteristics contexts. PM aims to determine the best treatments for contexts that might appear in practice, and thus provides tailored treatment for each patient. This is substantially different from the traditional evaluation of treatment performance (World Health Organization \citeyear{who2003}, Chap. 1).

PM involves evaluating the effectiveness of medical treatments under different contexts. Typically there are two ways to do it, by trial-based and model-based approaches. The trial-based approach applies statistical analysis to a series of well-designed clinical trials, and is capable of supporting personalized medicine with a large set of contexts \citep{schork2015}. However, this approach suffers from major ethical issues. Statistical analysis and inference rely on comparing the results of the treatment group and control group. It is unethical if the patients in the control group have worsening progression and are not allowed to take experimental drugs \citep{mok2011}. In addition, this approach is further complicated by the prohibitively large amount of resource needed for following up the test results and making decisions. These drawbacks of the trial-based approach can limit its implementation in the real world \citep{hamburg2010}.

The model-based approach employs mathematical models to depict the progression of the disease, and based on it, assesses the effectiveness of the treatment \citep{Garnett2011}. It does not involve making experiments on humans, and thus can avoid the abovementioned ethical and resource-related issues in trial-based approaches. The evaluation model is generally stochastic due to the uncertainty in the model structure and transition and the estimation of the model parameters. From the personalized perspective, the effectiveness of a treatment is also random among individuals, even under the same patient context \citep{chickhealth2006}. There has been a rich body of literature on the application of stochastic models to healthcare problems, e.g., the epidemiological transmission dynamics \citep{chickhealth2001,chickhealth2008,alonso2007} and HIV preventions for susceptible populations \citep{tan2012}.

In this research, we will focus on the model-based approach for PM and study the problem of efficiently identifying the best treatment under all the possible patient contexts within a finite computing time. In view that stochastic models for practical problems can be large-scaled, complex and not analytical, we will use the generic tool of simulation to evaluate the performance of them.

In simulation experiments, designs (a terminology in systems engineering, analogous to treatments in medical decision problems) are simulated for multiple replications. Their performance estimators (typically sample means) are compared and the estimated best design is selected. This practice imposes two challenges for the purpose of PM. First, for a given patient context, the probability that we correctly select the true best treatment is always less than one with a finite simulation budget. The randomness in the model will cause a non-best treatment to occasionally outperform the best, leading us to a suboptimal decision. Second, the difficulties in correctly selecting the best treatment vary among contexts. The best treatment under some contexts is easy to identify, while for some other contexts, it can be highly difficult.

To address these two challenges, in this research, we propose to utilize the simulation budget to maximize the chance of identifying the best treatment under each possible context. It is achieved by smartly controlling the number of simulation replications allocated to each pair of context and treatment, so as to concentrate the computing efforts on contexts where the best treatment is more difficult to identify. By doing so, the best treatment under each context can be correctly selected with a higher confidence in a limited time. We call this problem contextual ranking and selection (CR\&S).

CR\&S is closely related to two streams of literature. The first is ranking and selection (R\&S). R\&S is a well-established model in the field of simulation optimization. It aims to allocate the simulation budget to a set of competing system designs in order to efficiently select the best one. Representative R\&S methods include the optimal computing budget allocation (OCBA) \citep{chen2000,fu2007}, value of information procedures (VIP) \citep{frazier2008,chick2010} and indifference-zone (IZ) mehtods \citep{kim2001,nelson2001}. However, these procedures do not consider contexts, and thus cannot be applied to CR\&S \citep{goodwin2022real}.

Recently, \cite{shen2019} considered the R\&S problem in the presence of continuous contexts and used the linear models to predict the design performance. \cite{li2018data} generalized the method of \cite{shen2019} to handle high-dimensional context spaces. However, these two studies pursue a different goal from this research, which is to provide performance guarantees for the designs (treatments) selected, instead of optimizing the design performance. Compared to them, the OCBA-type method is less conservative, in the sense that it can achieve better design performance with a less simulation budget \citep{branke2007}, at the cost of losing the performance guarantee on the designs selected. Therefore, our model and method are more appropriate when efficiency is important (e.g., when the simulation budget available is relatively small).

In addition, \cite{hu2017sequential} and \cite{pearce2017efficient} considered the large-scale problem of CR\&S and used the method of Bayesian optimization to solve it. They employed the stochastic kriging model for predictions of the design performance and focused on how to search the design and context spaces, rather than developing the budget allocation rules. The performances of their algorithms were only studied empirically. \cite{ding2022knowledge} extended the algorithm in \cite{pearce2017efficient} and showed that the new algorithm is consistent. Compared to these work, this research aims at the budget allocations of the small-scale and large-scale problems and shows that the proposed budget allocation rules and selection algorithms are asymptotically optimal, which is a stronger result than consistency.

The second stream of literature related to CR\&S is the best arm identification (BAI). BAI is more studied in the fields of statistics and machine learning, but it is a very similar model as R\&S, aiming to identify the best arm from a finite set by adaptively pulling the arms and learning their rewards without consideration of contexts \citep{audibert2010best,kaufmann2016complexity,russo2020simple}. Recently, BAI has also been extended to the context environment, known as contextual bandits \citep{liz2022instance}. In bandit problems, the sampling object is typically the real system, while in R\&S, the sampling object is the simulation model. The different sampling objects do not cause any differences when solving BAI and R\&S problems, but make the structures of contextual bandits and CR\&S problems fundamentally different. In contextual bandits, contexts are associated with the real system and are thus out of the experimenter's control, i.e., the experimenter can only decide which arm to sample given the context that appears, but cannot decide which context to appear or sample \citep{tewari2017ads}. In contrast, CR\&S considers an entirely simulated environment, in which contexts are also input variables to the simulation model and are controlled by the experimenter. As a result, the experimenter needs to decide both the context and design (the context-design pair) to sample. It leads to a different and more complex decision problem.

Our contributions in this research are four-fold. First, we study three measures for evaluating the evidence of correct selection over the context space. These measures are extensions of the probability of correct selection (PCS) used in R\&S to the contextual setting, and are capable of depicting the quality of the estimated best treatment under all the possible contexts. We show that the three measures are asymptotically equivalent, in the sense that they have the same rate function.

Second, we propose two formulations for the PM problem. Both formulations optimize the rate function of the three measures under a simulation budget constraint. One formulation samples all treatment-context pairs and is suitable for a small context space. In the other formulation, treatment performance and context are assumed to have linear relationship. This is suitable for a large context space.

Third, for both formulations, we develop the rate-optimal selection rules and devise easily implementable selection algorithms, called CR\&S Algorithms 1 and 2. We show that the two algorithms can recover the rate-optimal selection rules.

Last, we conduct extensive numerical experiments to assess the performances of the two algorithms. We first test them on a set of benchmark functions and demonstrate their superiority in solving different types of problems. Next, we apply the algorithms to two real-world PM problems and obtain the medical decision maps for them.

The rest of the paper is organized as follows. Section \ref{sec:3} introduces the basic notation and assumptions. Section \ref{sec:4} studies three objective measures of CR\&S and their rate functions. Sections \ref{sec:discrete} and \ref{sec:lin} consider the PM problem with small and large context spaces respectively. They formulate and solve the selection problems, develop selection algorithms for implementation and theoretically study the their performances. Numerical examples and computational results are provided in Section \ref{sec:7}, followed by conclusions and discussion in Section \ref{sec:con}.

\section{Preliminaries}
\label{sec:3}

Suppose there are $k$ different treatments. The performance of each treatment depends on $\X=(X_1,\ldots,X_d)^\top$, a vector of random contexts with support $\Xcal\subseteq \mathbb{R}^d$. For each treatment $i=1,2,\ldots,k$, let $Y_{il} (\x)$ be the $l$th simulation sample from treatment $i$ and context $\x$, and $y_i (\x)$ be the mean performance of this treatment. We have $Y_{il} (\x)= y_i (\x)+\epsilon_{il} (\x)$, where $\epsilon_{il} (\x)$ is the random noise incurred in the simulation. Denote $n_i(\x)$ as the number of simulation replications for treatment $i$ and context $\x$. The sample mean $\bar{Y}_{i} (\x)=\frac{1}{n_i(\x)}\sum_{l=1}^{n_i(\x)}Y_{il} (\x)$. Without loss of generality, we let the best treatment $i^*(\x)$ under context $\x$ be the treatment with the smallest mean performance.

Throughout the paper, we assume that $\Xcal$ has a finite number of $m$ possible contexts $\x_1,\x_2,...,\x_m$. This setting aligns with context spaces that are finite in nature. For infinite context spaces (continuous or discrete and unbounded), we usually do not need to find the best treatment for each context; instead, a common practice is to classify the values of context variables into a number of categories/levels. For example, when treating diabetic patients, a key context variable is the body mass index (BMI) of the patients and it takes real values. Two possible ways to process BMI is to classify it into categories $<$18.5 underweight, 18.5-24.9 normal weight, 25.0-29.9 overweight and $\geq$30.0 obesity \citep{whobmi}, or more accurately, into levels $<$18, 18, 19, ..., 29, 30 and $>$30. To this end, the finite setting provides great flexibility in the level of contextual discrepancy we want to distinguish when formulating the problem.

Specifically, we consider two cases for the context space. The first case is when the context space is small, and we have time to simulate all the treatment-context pairs. The second case is when the context space is large, and we only have time to simulate treatments under a fraction of contexts. In this case, we further assume that all contexts lie on a grid, and the relationship between the treatment performance and contexts can be described by linear models, so the performances of treatments under un-simulated contexts can be interpolated. In this research, we call them \emph{small-scale problem} and \emph{large-scale problem} respectively.

Suppose $n$ is our total simulation budget (number of simulation replications), and $n_{i,j}$ is the number of simulation replications we allocate to treatment $i$ under context $\x_j$. Let $\alpha_{i,j}=n_{i,j}/n$ and $\alphab = (\alpha_{1,1},\alpha_{2,1},\dots,\alpha_{k,1}, \alpha_{1,2}, \alpha_{2,2}, \dots, \alpha_{k,2},\dots, \alpha_{1,m}, \alpha_{2,m}, \dots, \alpha_{k,m})$ be the vector of $\alpha_{i,j}$. We make the following technical assumptions in our analysis.

\begin{assumption}\label{ass:1}
	The best treatment $i^*(\x)$ is unique for all $\x\in\Xcal$.
\end{assumption}

\begin{assumption}\label{ass:2}
	$Y_{il} (\x)$'s are independent across different $i$, $l$ and $\x$.
\end{assumption}

\begin{assumption}\label{ass:3}
	$Y_{il}(\x)$'s are normally distributed with mean $y_i(\x)$ and variance $\sigma^2(\x)$.
\end{assumption}

Assumption \ref{ass:1} assumes that the best treatment under each of the $m$ contexts is unique, because two treatments with the same mean performance cannot be distinguished. The assumptions of independence and normality of samples in Assumptions \ref{ass:2} and \ref{ass:3} are standard in the simulation optimization literature \citep{law2000}. The independence between simulation samples can be achieved by using independent sequences of random numbers in different simulation runs. The normality assumption is typically satisfied in simulation because the output is obtained from an average performance or batch means. According to the Central Limit Theorem, it is approximately normal.

\section{Objective Measures}
\label{sec:4}

In this section, we discuss three objective measures for PM. Next, we analyze the rate functions of the three measures and establish their equivalence.

Suppose performance $y_i(\x)$ of treatment $i$ under context $\x$ is estimated by $\hat{y}_i(\x)$. For context $\x$, a correct selection happens when the estimated best treatment $\hat{i}^*(\x)$ is identical to the real best treatment $i^*(\x)$. However, the correct selection can never be guaranteed in practice with a finite simulation budget. Under a fixed context $\x$, traditional R\&S typically assesses the quality of the selection for the best treatment by the probability of correct selection (PCS)
\begin{equation*}
	\pcs(\x)=\PP (\hat{i}^*(\x)=i^*(\x))=\PP\left(\bigcap_{i=1,i\neq i^*(\x)}^k \Big(\hat{y}_{i^*(\x)} (\x)<\hat{y}_{i} (\x)\Big)\right),
\end{equation*}
and seeks to either maximize this probability or guarantee a pre-specified level for it. The probability here is taken with respect to the random noises in the simulation samples.

In CR\&S, each context $\x$ is associated to a R\&S problem. We want to provide the best treatments for all the $m$ contexts, and therefore need measures for evaluating the quality of the selection over the entire context space $\Xcal$. To fulfill this need, we consider the following three measures based on PCS:
\begin{align*}
	\pcs_{\Erm}&=\EE[\pcs(\X)]=\sum_{j=1}^m p_j \pcs(\x_j),\\
	\pcs_{\Mrm}&=\min_{\x\in\Xcal}\pcs(\x),\\
	\pcs_{\Arm}&=\PP\left(\bigcap_{j=1}^m\bigcap_{i=1,i\neq i^*(\x_j)}^k \Big(\hat{y}_{i^*(\x_j)} (\x_j)<\hat{y}_{i} (\x_j)\Big)\right).
\end{align*}

In $\pcs_{\Erm}$, $p_j$ is the probability of $\X=\x_j$, $j=1,2,...,m$. $\pcs_{\Erm}$ describes the expected probability of correct selection over $\Xcal$, where the expectation is taken with respect to the randomness of $\X$. $\pcs_{\Mrm}$ shows the worst-case performance of $\pcs(\x)$ over $\Xcal$. This measure is, in some sense, similar to the worst-case performance in robust optimization \citep{bertsimas2011} and R\&S with input uncertainty \citep{fan2018}.

$\pcs_{\Arm}$ is defined in a different way from the two measures above. It is not based on $\pcs(\x)$; instead, it requires correctness for all the comparisons of interest, i.e., comparisons between the estimated best treatment and the alternatives under all the possible contexts. $\pcs_{\Arm}$ sets the highest standard for the quality of the selection among the three, and is appropriate to be used by conservative decision makers. It is obvious that $\pcs_{\Arm}\leq\pcs_{\Mrm}\leq\pcs_{\Erm}$. Intuitively, $\pcs_{\Erm}$ and $\pcs_{\Mrm}$ are the average and the minimum probabilities of the best treatment being identified among all the patient contexts; $\pcs_{\Arm}$ is the probability of the best treatment being identified for all the contexts. Note that $\pcs_{\Arm}$ is newly proposed for CR\&S, while $\pcs_{\Erm}$ and $\pcs_{\Mrm}$ have been used and discussed in \cite{shen2019} as measures for R\&S with covariates.

Due to the lack of analytical expressions of $\pcs_{\Erm}$, $\pcs_{\Mrm}$ and $\pcs_{\Arm}$, it is challenging to find the exact optimizers of them. As a result, it is common to instead pursue their asymptotic optimizers (optimizers as $n\rightarrow\infty$) in the R\&S literature \citep{chen2000,frazier2008,ryzhov2016}. Asymptotic optimizers become close to the real optimizers when the simulation budget $n$ is large, and often demonstrate very good empirical performance when $n$ is small \citep{branke2007}. To find asymptotic optimizers of $\pcs_{\Erm}$, $\pcs_{\Mrm}$ and $\pcs_{\Arm}$, we can look for solutions that maximize the asymptotic performance of the three measures, i.e., solutions that maximize the rates at which they converge to 1. The following theorem characterizes these rates of the three measures.

\begin{theorem}\label{theo:1}
	Define probabilities of false selection $\pfs_{\Erm}=1-\pcs_{\Erm}$, $\pfs_{\Mrm}=1-\pcs_{\Mrm}$ and $\pfs_{\Arm}=1-\pcs_{\Arm}$. Under Assumptions \ref{ass:1}-\ref{ass:3}, the three measures $\pfs_{\Erm}$, $\pfs_{\Mrm}$ and $\pfs_{\Arm}$ converge exponentially and have the same rate function $\Rcal(\alphab)$. That is,
	\begin{align*}
		\lim_{n\rightarrow\infty}\frac{1}{n}\log \pfs_{\Erm}=
		\lim_{n\rightarrow\infty}\frac{1}{n}\log \pfs_{\Mrm}=
		\lim_{n\rightarrow\infty}\frac{1}{n}\log \pfs_{\Arm}=-\Rcal(\alphab).
	\end{align*}
	Moreover, it can be shown that
	$
	\Rcal(\alphab)=\min\limits_{j\in\{1,...,m\}}\min\limits_{i\in\{1,...,k\},i\neq i^*(\x_j)}-\lim\limits_{n\rightarrow\infty}\frac{1}{n}\log \PP\left(\hat{y}_{i^*(\x_j)}(\x_j)\geq\hat{y}_{i} (\x_j)\right).
	$
\end{theorem}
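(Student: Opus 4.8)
The plan is to prove the three rate limits in a chain, showing first that $\pfs_{\Arm}$ and $\pfs_{\Erm}$ sandwich $\pfs_{\Mrm}$ in a way that forces a common rate, and then identifying that common rate with the stated minimum. Since we already have $\pcs_{\Arm}\leq\pcs_{\Mrm}\leq\pcs_{\Erm}$, taking complements gives $\pfs_{\Erm}\leq\pfs_{\Mrm}\leq\pfs_{\Arm}$, so $\frac{1}{n}\log\pfs_{\Erm}\leq\frac{1}{n}\log\pfs_{\Mrm}\leq\frac{1}{n}\log\pfs_{\Arm}$. Hence it suffices to compute the rate of the smallest quantity $\pfs_{\Erm}$ (an upper bound on the rate, i.e. a lower bound on $-\frac{1}{n}\log\pfs$) and of the largest quantity $\pfs_{\Arm}$ (the reverse bound), and verify they agree. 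If both equal $-\Rcal(\alphab)$, the squeeze pins down $\pfs_{\Mrm}$ automatically.

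First I would handle $\pfs_{\Arm}$. Writing $\pcs_{\Arm}$ as the intersection over all pairs $(i,j)$ with $i\neq i^*(\x_j)$ of the events $\{\bar{Y}_{i^*(\x_j)}(\x_j)<\bar{Y}_{i}(\x_j)\}$, its complement $\pfs_{\Arm}$ is the union of the corresponding false-comparison events $\{\bar{Y}_{i^*(\x_j)}(\x_j)\geq\bar{Y}_{i}(\x_j)\}$. The rate of a finite union of exponentially decaying probabilities is governed by the slowest-decaying term (the classical ``principle of the largest term''): the union bound gives $\limsup_n\frac{1}{n}\log\pfs_{\Arm}\leq\max_{j,i}\bigl(\lim_n\frac{1}{n}\log\PP(\bar{Y}_{i^*(\x_j)}(\x_j)\geq\bar{Y}_{i}(\x_j))\bigr)$, while bounding the union below by its largest single term gives the matching $\liminf$. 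By the Gärtner--Ellis result already quoted, each single term has rate $-\Gcal_{i^*(\x_j),i,j}(\alpha_{i^*(\x_j),j},\alpha_{i,j})$. Taking the max over the negatives (equivalently the min over the $\Gcal$'s) yields $\lim_n\frac{1}{n}\log\pfs_{\Arm}=-\min_{j}\min_{i\neq i^*(\x_j)}\Gcal_{i^*(\x_j),i,j}=-\Rcal(\alphab)$.

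Next I would treat $\pfs_{\Erm}=\sum_{j=1}^m p_j\,\pfs(\x_j)$, a finite positive combination of the per-context false-selection probabilities. Each $\pfs(\x_j)=\PP\bigl(\bigcup_{i\neq i^*(\x_j)}\{\bar{Y}_{i^*(\x_j)}(\x_j)\geq\bar{Y}_{i}(\x_j)\}\bigr)$ is itself a finite union, so by the same largest-term argument its rate is $-\min_{i\neq i^*(\x_j)}\Gcal_{i^*(\x_j),i,j}$. A weighted sum of finitely many exponentially decaying terms again decays at the rate of the slowest term, and the fixed positive weights $p_j$ do not affect exponential rates; therefore $\lim_n\frac{1}{n}\log\pfs_{\Erm}=-\min_{j}\bigl(\min_{i\neq i^*(\x_j)}\Gcal_{i^*(\x_j),i,j}\bigr)=-\Rcal(\alphab)$, the same value as for $\pfs_{\Arm}$. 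The squeeze then forces $\lim_n\frac{1}{n}\log\pfs_{\Mrm}=-\Rcal(\alphab)$ as well, and the closed form for $\Rcal$ has been obtained along the way.

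The routine pieces are the union/largest-term manipulations, which are standard in large-deviations R\&S arguments. The main obstacle I expect is making the exponential-equivalence steps fully rigorous rather than heuristic: one must justify that the individual per-comparison limits in the Gärtner--Ellis statement genuinely exist and can be combined, and in particular verify that Assumption \ref{ass:3} supplies the steepness/effective-domain conditions Gärtner--Ellis requires so that each $\PP(\bar{Y}_{i^*(\x_j)}(\x_j)\geq\bar{Y}_{i}(\x_j))$ has the clean rate $-\Gcal_{i^*(\x_j),i,j}$. A secondary subtlety is that $\pcs(\x_j)$ is the probability of an \emph{intersection} of correlated comparison events (they share the common estimator $\bar{Y}_{i^*(\x_j)}(\x_j)$), so one should confirm that its false-selection complement still decays at exactly the single worst-pair rate; the cleanest route is to bound $\pfs(\x_j)$ between its largest term and the union bound over pairs, both of which share the same exponential rate, so the correlation is immaterial at the level of rate functions.
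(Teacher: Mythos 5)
Your proposal is correct and follows essentially the same route as the paper: both identify the rate of each pairwise false-comparison probability via the G\"artner--Ellis/Glynn--Juneja result and then propagate it through finite unions and weighted sums using the principle of the largest (slowest) term. The only cosmetic difference is that you obtain the rate of $\pfs_{\Mrm}$ by squeezing it between $\pfs_{\Erm}$ and $\pfs_{\Arm}$, whereas the paper computes it directly by commuting $\max$ with $\lim\frac{1}{n}\log(\cdot)$; both steps are immediate and equivalent in content.
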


To interpret Theorem \ref{theo:1}, we pick $i_o$ and $j_o$ such that
\begin{equation*}
	(i_o,j_o)\in\argmin_{i\in\{1,...,k\},i\neq i^*(\x_j),j\in\{1,...,m\}} -\lim_{n\rightarrow\infty}\frac{1}{n}\log \PP\left(\hat{y}_{i^*(\x_j)}(\x_j)\geq\hat{y}_{i} (\x_j)\right).
\end{equation*}
The theorem shows that the three measures, though defined from different perspectives, converge at the same exponential rate $\Rcal(\alphab)$, where $\alphab$ is the sampling rate of each treatment-context pair. The rate function $\Rcal(\alphab)$ is characterized by the most difficult comparison among comparisons between the best treatment and non-best treatments under each context, i.e., the comparison of sample means between treatments $i^*(\x_{j_o})$ and $i_o$ under context $j_o$. The reason for this effect is that, the most difficult comparison has the slowest convergence rate, which dominates the convergence rates of the other comparisons, and thus represents the rate these measures converge at. Theorem \ref{theo:1} lays the foundation of this paper: instead of considering the three measures separately, we can solve them once and for all by directly optimizing the rate function $\Rcal(\alphab)$.

\section{Small-Scale Problem}
\label{sec:discrete}

In this section, we consider the small-scale problem, where our simulation budget is sufficient for simulating all the treatment-context pairs. The estimate $\hat{y}_i(\x_j)$ for the performance $y_i(\x_j)$ of treatment $i$ and context $\x_j$ is the sample mean $\bar{Y}_i(\x_j)$, $i=1,2,...,k$ and $j=1,2,...,m$.

\subsection{Rate-Optimal Budget Allocation Rule}

For the small-scale problem, optimization of the rate function $\Rcal(\alphab)$ is given by
\begin{align}\label{eq:1}
	\min \ -\Rcal(\alphab) \quad
	\st \  \sum_{i=1}^k \sum_{j=1}^m  \alpha_{i,j}=1, \
	\alpha_{i,j}\geq0, \ i=1,2,...,k, \ j=1,2,...,m.
\end{align}
The simulation budget constraint $\sum_{i=1}^k \sum_{j=1}^m  \alpha_{i,j}=1$ is equivalent to $\sum_{i=1}^k \sum_{j=1}^m  n_{i,j}=n$. This is an OCBA-like formulation \citep{chen2000}, which finds a simulation budget allocation strategy to optimize the measure of interest, i.e., the rate function $\Rcal(\alphab)$ in our problem.

Before we solve (\ref{eq:1}), we carry out more analysis on the rate function $\Rcal(\alphab)$. According to Theorem \ref{theo:1}, $\Rcal(\alphab)=\min_{j\in\{1,2,...,m\}}\min_{i\in\{1,...,k\},i\neq i^*(\x_j)}-\lim_{n\rightarrow\infty}\frac{1}{n}\log \PP\left(\hat{y}_{i^*(\x_j)}(\x_j)\geq\hat{y}_{i} (\x_j)\right)$. We denote
\begin{equation*}
	-\lim_{n\rightarrow\infty}\frac{1}{n}\log \PP\left(\bar{Y}_{i^*(\x_j)}(\x_j)\geq\bar{Y}_{i} (\x_j)\right) \doteq \Gcal_{i^*(\x_j),i,j}(\alpha_{i^*(\x_j),j},\alpha_{i,j}).
\end{equation*}
From the G\"{a}rtner-Ellis Theorem \citep{dembo1998}, for i.i.d. normal samples $Y_{il} (\x)$,
\begin{align*}
	\Gcal_{i^*(\x_j),i,j}(\alpha_{i^*(\x_j),j},\alpha_{i,j}) = \frac{(y_{i^*(\x_j)}(\x_j) - y_{i}(\x_j))^2}{ 2(\sigma_{i^*(\x_j)}^2(\x_j)/\alpha_{i^*(\x_j),j} + \sigma_{i}^2(\x_j)/\alpha_{i,j}) }.
\end{align*}
Then, an equivalent formulation of problem (\ref{eq:1}) is given by
\begin{align}
	\max & \ z \nonumber\\
	\st & \ \Gcal_{i^*(\x_j),i,j}(\alpha_{i^*(\x_j),j},\alpha_{i,j})\geq z, \ \ i=1,2,...,k \text{ and }i\neq i^*(\x_j), j=1,2,...,m,\nonumber\\
	& \sum_{i=1}^k \sum_{j=1}^m \alpha_{i,j}=1, \
	\alpha_{i,j}\geq0, \ \ i=1,2,...,k, \ j=1,2,...,m. \label{eq:2}
\end{align}

Note that $\Gcal_{i^*(\x_j),i,j}(\alpha_{i^*(\x_j),j},\alpha_{i,j})$ is a concave function, so $\Gcal_{i^*(\x_j),i,j}(\alpha_{i^*(\x_j),j},\alpha_{i,j})\geq z$ forms a convex set, and problem (\ref{eq:2}) is a convex optimization model. We can investigate the KKT conditions \citep{boyd2004} of this model to solve it.

\begin{theorem}\label{theo:3}
	The optimal solution to problem (\ref{eq:2}) is given by
	\begin{align}
		&\frac{\alpha_{i^*(\x_j),j}^2}{\sigma_{i^*(\x_j)}^2(\x_j)}=
		\sum_{i=1,i\neq i^*(\x_j)}^k \frac{\alpha_{i,j}^2} {\sigma_{i}^2(\x_j)}, \ \ j=1,2,...,m, \label{eq:add4}\\
		&\frac{(y_i (\x_j)-y_{i^*(\x_j)} (\x_j))^2} {\sigma_{i^*(\x_j)}^2(\x_j)/\alpha_{i^*(\x_j),j}+\sigma_i^2(\x_j)/\alpha_{i,j}}=
		\frac{(y_{i'} (\x_{j'})-y_{i^*(\x_{j'})} (\x_{j'}))^2} {\sigma_{i^*(\x_{j'})}^2(\x_{j'})/\alpha_{i^*(\x_{j'}),j'}+\sigma_{i'}^2(\x_{j'})/\alpha_{i',j'}}, \ j,j'=1,...,m,\nonumber\\
		& i,i'=1,...,k, \ i\neq i^*(\x_j) \text{ and } i'\neq i^*(\x_{j'}). \label{eq:add6}
	\end{align}
\end{theorem}

Theorem \ref{theo:3} indicates that the solution satisfying conditions (\ref{eq:add4})-(\ref{eq:add6})
corresponds to the budget allocation rule that maximize the convergence rate of $\pfs_{\Erm}$, $\pfs_{\Mrm}$ and $\pfs_{\Arm}$. Condition (\ref{eq:add4}) establishes for each context $\x_j$ a certain balance between the proportions of simulation replications allocated to the best treatment $\alpha_{i^*(\x_j),j}$ and those allocated to non-best treatments $\alpha_{i,j}$ for $i\neq i^*(\x_j)$, in the sense that $\frac{\alpha_{i^*(\x_j),j}^2}{\sigma_{i^*(\x_j)}^2(\x_j)}$ (represent the simulation replications allocated to the best treatment) should be equal to $\sum_{i=1,i\neq i^*(\x_j)}^k \frac{\alpha_{i,j}^2} {\sigma_{i}^2(\x_j)}$ (represent the simulation replications allocated to the non-best treatments). Condition (\ref{eq:add6}) further adjusts the ratios of the simulation replications allocated to any two non-best treatments under the same context and across different contexts. This condition suggests that the difficulty of correctly identifying a non-best treatment $i$ under context $\x_j$ as non-best can be reflected by the index $\frac{(y_i (\x_j)-y_{i^*(\x_j)} (\x_j))^2} {\sigma_{i^*(\x_j)}^2(\x_j)/\alpha_{i^*(\x_j),j}+\sigma_i^2(\x_j)/\alpha_{i,j}}$, which represents a comparison between the non-best treatment $i$ and the optimal treatment $i^*(\x_j)$ under context $\x_j$. To optimize the rate function, we should allocate the simulation budget to the treatment-context pairs such that this index remains equal for all the treatment-context pairs.

\subsection{Selection Algorithm}
\label{sec:6}

In this section, we develop a selection algorithm based on optimality conditions (\ref{eq:add4}) and (\ref{eq:add6}) for implementation and analyze its asymptotic performance.

For simplicity of presentation, define
\begin{align*}
	&\Ucal_j^b=\frac{\alpha_{i^*(\x_j),j}^2}{\sigma_{i^*(\x_j)}^2(\x_j)}, \quad \Ucal_j^{non}=\sum_{i=1,i\neq i^*(\x_j)}^k \frac{\alpha_{i,j}^2} {\sigma_{i}^2(\x_j)}, \ \ j=1,2,...,m, \\
	&\Vcal_{i,j}=\frac{(y_i (\x_j)-y_{i^*(\x_j)} (\x_j))^2} {\sigma_{i^*(\x_j)}^2(\x_j)/\alpha_{i^*(\x_j),j}+\sigma_i^2(\x_j)/\alpha_{i,j}}, \ \ j=1,2,...,m, \ i=1,2,...,k \text{ and } i\neq i^*(\x_j).
\end{align*}
Note that $\Ucal_j^b$ represents the simulation replications allocated to the best treatment $i^*(\x_j)$ under context $\x_j$, $\Ucal_j^{non}$ represents the simulation replications allocated to the non-best treatments $i$ under the same context, and $\Vcal_{i,j}$ represents the difficulty of correctly identifying the non-best treatment $i$ under context $\x_j$ as non-best. Then, conditions (\ref{eq:add4}) and (\ref{eq:add6}) can be re-written as
\begin{align}
	&\Ucal_j^b=\Ucal_j^{non}, \ \ j=1,2,...,m, \label{eq:14}\\
	&\Vcal_{i,j}=\Vcal_{i',j'}, \ j,j'=1,...,m, \ i,i'=1,...,k, \ i\neq i^*(\x_j) \text{ and } i'\neq i^*(\x_{j'}). \label{eq:16}
\end{align}
Since equations (\ref{eq:14}) and (\ref{eq:16}) do not have an analytical solution, we will design the algorithm in a simple and cost-effective manner that gradually reduces the error terms $|\Ucal_j^b-\Ucal_j^{non}|$ and $|\Vcal_{i,j}-\Vcal_{i',j'}|$ in (\ref{eq:14}) and (\ref{eq:16}).

Let
$
(i_*,j_*)\in\argmin_{j\in\{1,2,...,m\},i\in\{1,...,k\}\setminus\{i^*(\x_j)\}}\Vcal_{i,j}.
$
Note that
\begin{align*}
	&\frac{d \Ucal_j^b}{d \alpha_{i^*(\x_j),j}}=\frac{2\alpha_{i^*(\x_j),j}}{\sigma_{i^*(\x_j)}^2(\x_j)}>0,
	\ \ j=1,2,...,m;\\
	&\frac{\partial \Ucal_j^{non}}{\partial \alpha_{i,j}}=\frac{2\alpha_{i,j}}{\sigma_{i}^2(\x_j)}>0, \ \ j=1,2,...,m, \ i=1,2,...,k \text{ and } i\neq i^*(\x_j); \\
	&\frac{\partial \Vcal_{i,j}}{\partial \alpha_{i^*(\x_j),j}}= \frac{(y_i (\x_j)-y_{i^*(\x_j)} (\x_j))^2} {(\sigma_{i^*(\x_j)}^2(\x_j)/\alpha_{i^*(\x_j),j}+\sigma_i^2(\x_j)/\alpha_{i,j})^2}
	\frac{\sigma_{i^*(\x_j)}^2(\x_j)}{\alpha_{i^*(\x_j),j}^2}>0, \\
	&\frac{\partial \Vcal_{i,j}}{\partial \alpha_{i,j}}= \frac{(y_i (\x_j)-y_{i^*(\x_j)} (\x_j))^2} {(\sigma_{i^*(\x_j)}^2(\x_j)/\alpha_{i^*(\x_j),j}+\sigma_i^2(\x_j)/\alpha_{i,j})^2}
	\frac{\sigma_{i}^2(\x_j)}{\alpha_{i,j}^2}>0, \ \ j=1,2,...,m, \ i=1,2,...,k \text{ and } i\neq i^*(\x_j).
\end{align*}
That is, we can choose to increase the values of $\Ucal_j^b$ and $\Ucal_j^{non}$ by allocating more replications to treatment $i^*(\x_j)$ and treatment $i$ for any $i\neq i^*(\x_j)$ under context $\x_j$. We can also choose to increase the value of $\Vcal_{i,j}$ by allocating additional replications to either treatment $i^*(\x_j)$ or treatment $i$ under context $\x_j$, $i\in\{1,2,...,k\}$ and $i\neq i^*(\x_j)$.

To design a selection algorithm based on (\ref{eq:14}) and (\ref{eq:16}), suppose for a budget allocation, (\ref{eq:16}) cannot be fulfilled. To fix it, we will provide a small incremental budget to improve $\Vcal_{i_*,j_*}$ so that the gap between $\min_{j\in\{1,2,...,m\}}\min_{i\in\{1,...,k\}\setminus\{i^*(\x_j)\}}\Vcal_{i,j}$ and  $\max_{j\in\{1,2,...,m\}}\max_{i\in\{1,...,k\}\setminus\{i^*(\x_j)\}}\Vcal_{i,j}$ can be reduced. As discussed above, allocating more replications to treatment $i^*(\x_{j_*})$ or $i_*$ under context $\x_{j_*}$ both achieve this goal. To further decide
which of treatments $i^*(\x_{j_*})$ and $i_*$ receives the incremental budget, we check condition (\ref{eq:14}). If $\Ucal_{j_*}^b<\Ucal_{j_*}^{non}$, the additional replications should be allocated to the best treatment $i^*(\x_{j_*})$ in order to balance the equation; otherwise, the additional replications should be allocated to the non-best treatment $i_*$. This idea is summarized in CR\&S Algorithm 1 below.

\begin{algorithm}[htbp]
	\caption{CR\&S Algorithm 1.}
	\label{alg:ocba}
	\begin{algorithmic}
		\STATE {\bfseries Input:} Specify the number of contexts $m$, number of treatments $k$, total simulation budget $n$ and initial number of simulation replications $n_0$. Iteration counter $r\leftarrow0$.
		\STATE {\bfseries Initial Sampling:} Perform $n_0$ replications for treatment $i$ under context $\x_j$, $i=1,2,...,k$ and $j=1,2,...,m$, and calculate sample means $\bar{Y}_{i} (\x_j)$ and sample variances $\hat{\sigma}^2_{i} (\x_{j})$. Set $\hat{n}_{i,j}=n_0$, $n^{(r)}=\sum_{j=1}^m \sum_{i=1}^k \hat{n}_{i,j}$ and $\hat{\alpha}_{i,j}=\hat{n}_{i,j}/n^{(r)}$.
		\REPEAT
		\STATE {\bfseries Step 1:} Obtain $\hat{\Ucal}_j^b$, $\hat{\Ucal}_j^{non}$, $\hat{\Vcal}_{i,j}$ and $\hat{i}^*(\x_j)$ for $i=1,2,...,k$, $i\neq \hat{i}^*(\x_j)$ and $j=1,2,...,m$. Let $(\hat{i}_*,j^r)\in\argmin_{j\in\{1,2,...,m\},i\in\{1,...,k\}\setminus\{\hat{i}^*(\x_j)\}}\hat{\Vcal}_{i,j}$.
		\STATE {\bfseries Step 2:} If $\hat{\Ucal}_{j^r}^b<\hat{\Ucal}_{j^r}^{non}$, $i^r=\hat{i}^*(\x_{j^r})$; otherwise $i^r=\hat{i}_*$. Provide one more replication to treatment $i^r$ under context $\x_{j^r}$. Update $\bar{Y}_{i^r} (\x_{j^r})$ and $\hat{\sigma}^2_{i^r} (\x_{j^r})$.
		\STATE {\bfseries Step 3:} Update $\hat{n}_{i,j}$, $n^{(r+1)}$ and $\hat{\alpha}_{i,j}$. $r\leftarrow r+1$.
		\UNTIL{$n^{(r)}=n$.}
	\end{algorithmic}
\end{algorithm}

At the beginning of the algorithm, we simulate each treatment-context pair for the same number of replications and acquire initial estimates for their means and variances. In each of the subsequent iterations, we sample more on a certain treatment-context pair determined by $\hat{\Ucal}_j^b$, $\hat{\Ucal}_j^{non}$ and $\hat{\Vcal}_{i,j}$, and update its sample mean and sample variance. Although we have set the incremental budget $\Delta n=1$ in this generic algorithm, in practice, $\Delta n$ can be larger than 1, to reduce the number of iterations. The algorithm terminates when the total simulation budget is exhausted.

This idea for designing CR\&S Algorithm 1 does not involve solving the set of nonlinear equations (\ref{eq:add4}) and (\ref{eq:add6}) and is thus cost-effective; more importantly, this algorithm can recover the optimality conditions (\ref{eq:add4}) and (\ref{eq:add6}). It can be established in the following theorem.

\begin{theorem}\label{theo:4}
	Suppose Assumptions \ref{ass:1}-\ref{ass:3} hold. For $\hat{\alpha}_{i,j}$ generated by CR\&S Algorithm 1, $i=1,2,...,k$ and $j=1,2,...,m$, we have almost surely that
	\begin{align*}
		&\lim_{r \to \infty} \left| \frac{\hat{\alpha}_{i^*(\x_j),j}^2}{\sigma_{i^*(\x_j)}^2(\x_j)}-
		\sum_{i=1,i\neq i^*(\x_j)}^k \frac{\hat{\alpha}_{i,j}^2} {\sigma_{i}^2(\x_j)} \right| = 0, \ \ j=1,2,...,m, \\
		&\lim_{r \to \infty} \left| \frac{(y_i (\x_j)-y_{i^*(\x_j)} (\x_j))^2} {\sigma_{i^*(\x_j)}^2(\x_j)/\hat{\alpha}_{i^*(\x_j),j}+\sigma_i^2(\x_j)/\hat{\alpha}_{i,j}} -
		\frac{(y_{i'} (\x_j)-y_{i^*(\x_j)} (\x_j))^2} {\sigma_{i^*(\x_j)}^2(\x_j)/\hat{\alpha}_{i^*(\x_j),j}+\sigma_{i'}^2(\x_j)/\hat{\alpha}_{i',j}} \right| = 0,  \ \ j,j'=1,...,m,\nonumber\\
		& i,i'=1,...,k, \ i\neq i^*(\x_j) \text{ and } i'\neq i^*(\x_{j'}).
	\end{align*}
\end{theorem}
In other words, when $\alpha_{i,j}$'s are replaced by the sample allocation $\hat{\alpha}_{i,j}$'s generated by CR\&S Algorithm 1, conditions (\ref{eq:add4}) and (\ref{eq:add6}) still hold almost surely as the iteration $r \rightarrow \infty$. A byproduct of this theorem is that the number of simulation replications $\hat{n}_{i,j} = \hat{\alpha}_{i,j}n$ allocated to treatment $i$ under context $j$ by the algorithm will go to infinity as the total budget $n$ goes to infinity. It ensures that all the estimators in this algorithm, such as $\bar{Y}_{i} (\x_j)$, $\hat{\sigma}^2_{i} (\x_{j})$, $\hat{i}^*(\x_j)$, etc., will converge to their true values. Particularly, the estimated best treatment $\hat{i}^*(\x_j)$ will converge to the true best $i^*(\x_j)$ in the long term for all patient contexts $\x_j$, $j=1,...,m$.

\section{Large-Scale Problem}
\label{sec:lin}

In this section, we consider the large-scale problem. Suppose the contexts lie on a grid, and the relationship between treatment performance $y_i(\x)$ and context $\x$ can be described by the linear model
\begin{align*}
	y_i(\x) = \bbf(\x)^\top \bbeta_i, \quad i = 1,2,\dots,k,
\end{align*}
where $\bbeta_i=(\beta_{i1},\ldots,\beta_{iq})^\top$ is a vector of unknown parameters that need to be estimated and $\bbf(\x)=(\uf_{1}(\x),\ldots,\uf_{q}(\x))^\top$ is a vector of known basis functions. A common choice of $\uf_{i}(\x)$'s is $\uf_{i}(\x)=\x$, $i=1,2,...,q$. They can also be set as other functional forms to achieve a potential better fit. Although the linear models are simple and require the knowledge of adequate basis functions, they are robust to model misspecification and often have good performance in prediction \citep{thompson1982regression,james2013introduction}.

The large-scale problem looks similar to but is in essence different from a class of BAI problems known as linear bandits \citep{soare2014best,jedra2020optimal}. In linear bandits, it is assumed that treatment $i$ can be represented by a characteristic vector $\v_i$, and the mean performance of treatment $i$ is given by $\v_i^\top \bbeta_L$, where $\bbeta_L$ is a vector of unknown parameters. In other words, linear bandits are only concerned with one linear model, with independent variables of the model representing information of the treatments. No contexts are involved in linear bandits. Our large-scale problem is concerned with $k$ linear models, with independent variables of the models representing information of the contexts.

For the large-scale problem, we only need to simulate the treatments under a small fraction of contexts. Suppose the contexts we simulate are $\x_h^\circ$, $h=1,\dots,p$ and $p\ll m$. As before, $n_{i,h}$ denotes the number of simulation replications we allocate to the treatment-context pair $(i,\x_h^\circ)$. Let $\alpha_{i,h} = n_{i,h} / n$, $\alphab$ be the vector of $\alpha_{i,h}$'s and $\F=(\bbf(\x_1^\circ),\ldots,\bbf(\x_p^\circ))^\top$ be the $p \times q$ design matrix. For treatment $i$, let $\overline{\Y}_i = \left(\bar{Y}_i(\x_1^\circ),\ldots, \bar{Y}_i(\x_p^\circ)\right)^\top$ be the sample means of the treatments under the $p$ simulated contexts, and let $\overline{\bepsilon}_i = \left(\overline{\epsilon}_i(\x_1^\circ),\ldots, \overline{\epsilon}_i(\x_p^\circ)\right)^\top$ be the averaged observation errors, where $\overline{\epsilon}_i(\x_h^\circ) = \frac{1}{n_{i,h}}\sum_{l=1}^{n_{i,h}} \epsilon_{il}(\x_h^\circ)$.

We use the method of least squares to estimate ${\bbeta}_i$, i.e., $\widehat{\bbeta}_i = \left( \F^\top \F \right)^{-1} \F^\top \overline{\Y}_i$. Then, the estimate $\hat{y}_{i}(\x)$ for the mean performance $y_i(\x)$ of treatment $i$ under context $\x$ is $\bar{Y}_i^L(\x) = \bbf(\x)^\top \widehat{\bbeta}_i$.

\subsection{Rate-Optimal Budget Allocation Rule}

For the large-scale problem, optimization of the rate function $\Rcal(\alphab)$ is given by
\begin{align}\label{eq:linearR}
	\min \ -\Rcal(\alphab) \quad
	\st \  \sum_{i=1}^k \sum_{h=1}^p  \alpha_{i,h}=1, \
	\alpha_{i,h}\geq0, \ i=1,2,...,k, \ h=1,2,...,p.
\end{align}
Model \eqref{eq:linearR} has the same structure as \eqref{eq:1}. They both optimize the rate function $\Rcal(\alphab)$ subject to the simulation budget constraint. The difference is that in \eqref{eq:linearR}, mean performance $y_i(\x)$ is predicted by $\bar{Y}_i^L(\x)$ from the linear models.

By Theorem \ref{theo:1}, $\Rcal(\alphab)=\min\limits_{j\in\{1,...,m\}}\min\limits_{i\in\{1,...,k\},i\neq i^*(\x_j)}-\lim\limits_{n\rightarrow\infty}\frac{1}{n}\log \PP\left(\bar{Y}^L_{i^*(\x_j)}(\x_j)\geq\bar{Y}^L_{i} (\x_j)\right)$. Here we denote
\begin{align*}
	-\lim_{n\rightarrow\infty}\frac{1}{n}\log \PP\left(\bar{Y}^L_{i^*(\x_j)}(\x_j)\geq\bar{Y}^L_{i} (\x_j)\right) \doteq \Gcal_{i^*(\x_j),i,j}^{L}(\alphab).
\end{align*}
Obviously, $\Gcal_{i^*(\x_j),i,j}^{L}(\alphab)$ is different from the rate function $\Gcal_{i^*(\x_j),i,j}(\alpha_{i^*(\x_j),j},\alpha_{i,j})$ in Section \ref{sec:discrete}. We next derive $\Gcal_{i^*(\x_j),i,j}^{L}(\alphab)$ in the following lemma.

\begin{lemma}\label{lem:linrate}
	Suppose Assumptions \ref{ass:1}-\ref{ass:3} hold. With the linear models, the rate function of the three measures $\pfs_{\Erm}$, $\pfs_{\Mrm}$ and $\pfs_{\Arm}$ is
	$
	\Rcal(\alphab) = \min_{j\in\{1,...,m\}}\min_{i\in\{1,...,k\},i\neq i^*(\x_j)} \Gcal_{i^*(\x_j),i,j}^{L}(\alphab),
	$
	where
	\begin{align*}
		\Gcal_{i^*(\x_j),i,j}^{L}(\alphab) =  \frac{\left[ \bbf(\x_j)^\top (\bbeta_i - \bbeta_{i^*(\x_j)}) \right]^2 }{2 \bbf(\x_j)^\top \left( \F^\top \F \right)^{-1}  \F^\top \left(\bSigmap_{\epsilon,i}+\bSigmap_{\epsilon,i^*(\x_j)}\right) \F \left( \F^\top \F \right)^{-1}  \bbf(\x_j) }
	\end{align*}
	and $\bSigmap_{\epsilon,i_1}$ is the  diagonal matrix with $\left(\bSigmap_{\epsilon,i_1}\right)_{hh} = \frac{\sigma_{i_1}^2(\x_h^\circ)}{\alpha_{i_1,h}}$, $i_1=i,i^*(\x_j)$.
\end{lemma}

A model equivalent to \eqref{eq:linearR} is given by
\begin{equation}\label{eq:reprob}
	\begin{aligned}
		\max ~ & z  \\
		s.t. ~ & \Gcal_{i^*(\x_j),i,j}^{L}(\alphab) \ge z, \  i=1,2,...,k \text{ and }i\neq i^*(\x_j), j=1,2,...,m,  \\
		& \sum_{i=1}^k \sum_{h=1}^p \alpha_{i,h}=1, \  \alpha_{i,h}\geq0, ~ i=1,2,...,k, \ h=1,2,...,p.
	\end{aligned}
\end{equation}
Although \eqref{eq:reprob} is a convex optimization model, its KKT conditions cannot be easily analyzed as for its counterpart (\ref{eq:2}) in the small-scale problem. To solve \eqref{eq:reprob}, we will consider the dual problem of it. For simplicity of notation, let $\overline{\Y} = \left( \overline{\Y}^\top_1,\dots,\overline{\Y}^\top_k \right)^\top$, $\hat{\boldsymbol{\sigma}}^2_i = \left( \sigma^2_i(\x_1^\circ), \dots, \sigma^2_i(\x_p^\circ) \right)^\top$, $\hat{\boldsymbol{\sigma}}^2 = \left( \hat{\boldsymbol{\sigma}}^{2,\top}_1, \dots, \hat{\boldsymbol{\sigma}}^{2,\top}_k \right)^\top$, and $\blam$ be the vector of $\lambda_{i,j}$'s for $i=1,\dots,k$ and $i \ne i^*(\x_j)$ and $j = 1,\dots,m$. Let the mean of $(\overline{\Y},\hat{\boldsymbol{\sigma}}^2)$ be $(\boldsymbol{y},\boldsymbol{\sigma}^2)$.
\begin{theorem}\label{th:linall}
	The optimal solution to \eqref{eq:reprob} is
	\begin{align}\label{eq:alpocba}
		\alpha_{i,h} = \frac{\sqrt{\chi_{i,h}(\blam, \boldsymbol{y}, \boldsymbol{\sigma}^2)}}{ \sum_{i=1}^k \sum_{h=1}^p \sqrt{\chi_{i,h}(\blam, \boldsymbol{y}, \boldsymbol{\sigma}^2)}}, \ i=1,\dots,k, \ h=1,\dots,p.
	\end{align}
	In (\ref{eq:alpocba}), $\blam$ is the optimal solution to
	\begin{equation}\label{eq:probdual}
		\begin{aligned}
			\min_{\blam} ~ & a(\blam, \boldsymbol{y}, \boldsymbol{\sigma}^2) =  -\sum_{i=1}^k \sum_{h=1}^p \sqrt{ \chi_{i,h}(\blam, \boldsymbol{y}, \boldsymbol{\sigma}^2)  }  \\
			s.t. ~ & \sum_{j=1}^m \sum_{i=1, i\ne i^*(\x_j)}^k\lambda_{i,j}=1, \ \lambda_{i,j} \ge 0, ~i=1,\dots,k \text{ and } i \ne i^*(\x_j), \ j = 1,\dots,m,
		\end{aligned}
	\end{equation}
	where $\chi_{i,h}(\blam, \boldsymbol{y}, \boldsymbol{\sigma}^2)$ is defined as
	\begin{align}\label{eq:defchi}
		&\chi_{i,h}(\blam, \boldsymbol{y}, \boldsymbol{\sigma}^2)  \\
		=&\sigma^2_{i}(\x_{h}^\circ) \left( \sum_{j \in \Ccal_{i}} \sum_{i'=1,i'\ne i}^{k} \frac{2\lambda_{i',j} \left( \bbf(\x_{j})^\top \left( \F^\top \F \right)^{-1} \bbf(\x_{h}^\circ) \right)^2}{ \left[ \bbf(\x_j)^\top (\bbeta_i - \bbeta_{i'}) \right]^2 }   + \sum_{j \notin \Ccal_{i}} \frac{2\lambda_{i,j} \left( \bbf(\x_{j})^\top \left( \F^\top \F \right)^{-1} \bbf(\x_{h}^\circ) \right)^2}{ \left[ \bbf(\x_j)^\top (\bbeta_{i} - \bbeta_{i^*(\x_j)}) \right]^2 }    \right) \nonumber
	\end{align}
	for $i=1,\dots,k$ and $h=1,\dots,p$, and $\Ccal_i = \{j: i^*(\x_j) = i\}$.
\end{theorem}

Intuitively, allocating more replications to treatment $i$ under context $\x^\circ_h$ can increase the accuracy of estimate $\widehat{\bbeta}_i$, and the more accurate $\widehat{\bbeta}_i$ and $\widehat{\bbeta}_{i'}$ are, the more likely $\bar{Y}^L_{i}(\x_j) = \bbf(\x_j)^\top \widehat{\bbeta}_i < \bbf(\x_j)^\top \widehat{\bbeta}_{i'} = \bar{Y}^L_{i'}(\x_j)$ given $y_{i}(\x_j) < y_{i'}(\x_j)$, for all $j =1,\dots,m$. Each term in the summation of \eqref{eq:defchi} can be seen as the contribution of allocating replications to treatment $i$ under context $\x^\circ_h$ to the correct comparison between the best and non-best treatments under different contexts. Therefore, $\sqrt{\chi_{i,h}(\blam, \boldsymbol{y}, \boldsymbol{\sigma}^2)}$ can be seen as the total contribution of allocating replications to treatment $i$ under context $\x^\circ_h$ to maximizing the rate function $\Rcal(\alphab)$, and Theorem \ref{th:linall} indicates that the number of replications allocated to treatment $i$ under context $\x_h^\circ$ should be proportional to $\sqrt{\chi_{i,h}(\blam, \boldsymbol{y}, \boldsymbol{\sigma}^2)}$.

\subsection{Selection Algorithm}

In this section, we devise a selection algorithm for the large-scale problem based on Theorem \ref{th:linall}.

The parameters $\boldsymbol{y}$ and $\boldsymbol{\sigma}^2$ in Theorem \ref{th:linall} are unknown in practice and can be estimated by $\overline{\Y}$ and $\hat{\boldsymbol{\sigma}}^2$. Given $\overline{\Y}$ and $\hat{\boldsymbol{\sigma}}^2$, \eqref{eq:probdual} is a convex optimization problem, and we develop a gradient descent algorithm to find its optimal solution $\hat{\blam}$. In each iteration, we compute a descent direction $\dfrak$ and a descent stepsize $\sfrak$, and update $\hat{\blam}$ by letting it move along the direction $\dfrak$ with the stepsize $\sfrak$. Different from most gradient descent algorithms which conduct this movement for multiple times, our algorithm only conducts the movement once, and then plugs the updated $\hat{\blam}$,  $\overline{\Y}$, $\hat{\boldsymbol{\sigma}}^2$ into $\alpha_{i,h} = \frac{\sqrt{\chi_{i,h}(\blam, \boldsymbol{y}, \boldsymbol{\sigma}^2)}}{ \sum_{i=1}^k \sum_{h=1}^p \sqrt{\chi_{i,h}(\blam, \boldsymbol{y}, \boldsymbol{\sigma}^2)}}$ to compute the estimated optimal allocation $\hat{\alpha}_{i,h}$. This algorithm design considerably reduces the computation associated with gradient descent, while still ensuring that $\overline{\Y}$, $\hat{\boldsymbol{\sigma}}^2$ and $\hat{\alpha}_{i,h}$ converge to the correct values. Next, we provide a small incremental budget and allocate it to the treatment-context pairs based on $\hat{\alpha}_{i,h}$, and update $\overline{\Y}$ and $\hat{\boldsymbol{\sigma}}^2$ of the treatment-context pairs that receive additional replications. Then, the algorithm proceeds to the next iteration. This process is repeated until the simulation budget is consumed.

\begin{algorithm}[htbp]
	\caption{CR\&S Algorithm 2.}
	\label{alg:ocbaforrs}
	\begin{algorithmic}
		\STATE {\bfseries Input:} Specify the number of contexts $m$, number of treatments $k$, total simulation budget $n$ and initial number of simulation replications $n_0$. Calculate $\bbf(\x_j)$ for each context $\x_j$ and $\left( \F^\top \F \right)^{-1}$. Choose a small constant $\kappa_0$ and $\eta < \frac{1}{(k-1)m}$. Iteration counter $r \gets 0$.
		\STATE {\bfseries Initial Sampling:} Perform $n_0$ replications on each pair of treatment $i$ and context $\x_h^\circ$, calculate sample means and sample variances $\bar{Y}_{i} (\x_h^\circ)$ and $\hat{\sigma}^2_{i} (\x_{h}^\circ)$, and estimate $\bbeta_{i}$ by $\widehat{\bbeta}_i = \left( \F^\top \F \right)^{-1} \F^\top \overline{\Y}_i$. Let $\hat{n}_{i,h}=n_0$, $n^{(r)}= \sum_{i=1}^k \sum_{h=1}^p \hat{n}_{i,h}$ and $\hat{\alpha}_{i,h}=\hat{n}_{i,h}/n^{(r)}$. Find the best treatment $\hat{i}^*(\x_j) = \arg \min_i \bbf(\x_j)^\top \widehat{\bbeta}_i$ for each $\x_j$. Set $\hat{\lambda}_{i,j} = \frac{1}{(k-1)m}$, $i=1,\dots,k$, $i\ne \hat{i}^*(\x_j)$, $j=1,\dots,m$.
		\REPEAT
		\STATE {\bfseries Step 1:} $r \gets r+1$. Obtain  $\chi_{i,h} (\hat{\blam}, \overline{\Y}, \hat{\boldsymbol{\sigma}}^2)$ by plugging $\hat{\blam}$, $\overline{\Y}$, and $\hat{\boldsymbol{\sigma}}^2$ into $\chi_{i,h}(\blam, \boldsymbol{y}, \boldsymbol{\sigma}^2)$.
		\STATE {\bfseries Step 2:} Randomly choose a $(i^{r*},j^{r*})$ from $\{ (i,j): \hat{\lambda}_{i,j} \text{ exists and } \  \hat{\lambda}_{i,j} \ge \eta \}$.
		\STATE {\bfseries Step 3:} Compute the descent direction $\dfrak^{(r)} = \mathop{\arg \min}\limits_{\dfrak \in \Dcal^{(i^{r*},j^{r*})}(\hat{\blam})} \sfrak^{\max}(\dfrak,\hat{\blam}) \nabla a(\hat{\blam}, \overline{\Y}, \hat{\boldsymbol{\sigma}}^2)^\top \dfrak$, where $\Dcal^{(i',j')}(\hat{\blam}) = \{e_{i,j}-e_{i',j'}: i \ne i' \text{ or } j \ne j'\} \bigcup \{e_{i',j'} - e_{i,j}: i \ne i' \text{ or } j \ne j', \hat{\lambda}_{i,j} > 0\}$, $e_{i,j}$ is obtained by letting $\hat{\lambda}_{i,j}$ equal to one and all the other elements of $\hat{\blam}$ equal to zero, and $\sfrak^{\max}(\dfrak,\hat{\blam}) = \hat{\lambda}_{i_2,j_2}$ for $\dfrak = e_{i_1,j_1} - e_{i_2,j_2} \in \Dcal^{(i',j')}(\hat{\blam})$. Let $W^{(r)} = \nabla a(\hat{\blam}, \overline{\Y}, \hat{\boldsymbol{\sigma}}^2)^\top \dfrak^{(r)}$.
		\STATE {\bfseries Step 4:} If $W^{(r)}$ satisfies $W^{(r)} < \max\{ -\kappa_0, -(\frac{\log r}{r})^{1/4} \}$ and $\sfrak^{\max}(\dfrak^{(r)},\hat{\blam})W^{(r)} < \max\{ -\kappa_0, -(\frac{\log r}{r})^{1/2} \}$, choose $\sfrak^{(r)} = \text{LineSearch}(\dfrak^{(r)},\sfrak^{\max}(\dfrak^{(r)},\hat{\blam}),\hat{\blam}, \overline{\Y}, \hat{\boldsymbol{\sigma}}^2)$ and let $\hat{\blam} = \hat{\blam} + \sfrak^{(r)} \dfrak^{(r)}$. Otherwise, $\hat{\blam}$ remains unchanged.
		\STATE {\bfseries Step 5:} Update $\chi_{i,h}(\hat{\blam}, \overline{\Y}, \hat{\boldsymbol{\sigma}}^2)$. Compute $\hat{\alpha}^*_{i,h}$ using \eqref{eq:alpocba} with $\chi_{i,h}(\hat{\blam}, \overline{\Y}, \hat{\boldsymbol{\sigma}}^2)$.
		\STATE {\bfseries Step 6:} Choose $(i^r,h^r) = \arg\max_{(i,h)} \frac{\hat{\alpha}^*_{i,h}}{\hat{n}_{i,h}}$. Provide one more replication to treatment $i^r$ under context $\x_{h^r}^\circ$. Update $\bar{Y}_{i^r} (\x_{h^r}^\circ)$, $\hat{\sigma}^2_{i^r} (\x_{h^r}^\circ)$, $\widehat{\bbeta}_i$, and $\hat{i}^*(\x_j)$. If $\hat{i}^*(\x_j)$ is changed for any $\x_j$, set $\hat{\lambda}_{i,j} = \frac{1}{(k-1)m}$, $i=1,\dots,k$, $i\ne \hat{i}^*(\x_j)$, $j=1,\dots,m$.
		\STATE {\bfseries Step 7:} Update $\hat{n}_{i,h}$, $n^{(r)}$ and $\hat{\alpha}_{i,h}$.
		\UNTIL{$n^{(r)} = n$.}
	\end{algorithmic}
\end{algorithm}

This idea is summarized in CR\&S Algorithm \ref{alg:ocbaforrs}. Note that this way of algorithm design has appeared in the literature, e.g., in \cite{zhou2021}. The calculation of the stepsize $\sfrak$ in Step 4 of CR\&S Algorithm \ref{alg:ocbaforrs} calls for a line search, which is provided in Algorithm \ref{alg:linesear} below. For the input parameters $\sfrak_1$ and $\sfrak_2$ in Algorithm \ref{alg:linesear}, the recommended values are $10^{-4}$ and $10^{-1}$ (Chapter 3, \cite{nocedal2006numerical}).

\begin{algorithm}[htbp]
	\caption{LineSearch$(\dfrak,\sfrak^{\max},\hat{\blam}, \overline{\Y}, \hat{\boldsymbol{\sigma}}^2)$.}
	\label{alg:linesear}
	\begin{algorithmic}
		\STATE {\bfseries Initialization:} Specify the descent direction $\dfrak$, maximum feasible stepsize $\sfrak^{\max}$, dual solution $\hat{\blam}$, estimate of coefficients $\hat{\bbeta}$, parameters for line search $\sfrak_1$, $\sfrak_2$, and $\tau \in (0,1)$. Let $\sfrak=\sfrak^{\max}$.
		\STATE {\bfseries while} Any of the conditions
		\begin{align}
			\min_i \min_j \chi_{i,j}(\hat{\blam},\overline{\Y}, \hat{\boldsymbol{\sigma}}^2) &> 0, \label{eq:lin:fea}\\
			a(\hat{\blam}+\sfrak \cdot \dfrak,\overline{\Y}, \hat{\boldsymbol{\sigma}}^2) &\le a(\hat{\blam},\overline{\Y}, \hat{\boldsymbol{\sigma}}^2) +\sfrak_1 \sfrak \nabla a(\hat{\blam},\overline{\Y}, \hat{\boldsymbol{\sigma}}^2)^\top \dfrak,  \label{eq:lin:obj}\\
			\nabla a(\hat{\blam}+\sfrak\cdot \dfrak,\overline{\Y}, \hat{\boldsymbol{\sigma}}^2)^\top \dfrak & \le \sfrak_2 |\nabla a(\hat{\blam},\overline{\Y}, \hat{\boldsymbol{\sigma}}^2)^\top \dfrak|,  \label{eq:lin:der}
		\end{align}
		is not satisfied, \textbf{do} $\sfrak \leftarrow \tau \sfrak$.
		\STATE {\bfseries Output:} Stepsize $\sfrak$.
	\end{algorithmic}
\end{algorithm}

Similar to CR\&S Algorithm 1, CR\&S Algorithm \ref{alg:ocbaforrs} can recover the optimal solution to problem \eqref{eq:reprob}. This is established in the following theorem.
\begin{theorem}
	Suppose Assumptions \ref{ass:1}-\ref{ass:3} hold. For $\hat{\alpha}_{i,h}$ generated by CR\&S Algorithm \ref{alg:ocbaforrs}, we have that $\hat{\alpha}_{i,h}$ converges to the optimal solution to problem \eqref{eq:reprob} almost surely, $i=1,\dots,k$ and $h=1,\dots,p$.
\end{theorem}

\section{Numerical Experiments}
\label{sec:7}

In this section, we conduct two sets of numerical experiments. The first set tests the performances of CR\&S Algorithms 1 and 2 on a series of benchmark functions, and the second set applies them to two real-world PM problems.

\subsection{Performance Comparison on the Benchmark Functions}
\label{sec:72}

In this test, we numerically assess the performances of the CR\&S Algorithms 1 and 2 on some benchmark functions. We use the following algorithms for comparison:
\begin{itemize}
	\item \emph{Equal Allocation}. The number of simulation replications allocated to any treatment-context pair is equal. This is a naive method, and can serve as a baseline against which improvement from other methods might be measured.
	
	\item \emph{Successive Rejection with equal allocation among contexts} (Equal SR).  The original SR was designed for a single context and has been shown to be highly efficient for BAI problems with bounded sampling distributions \citep{carpentier2016tight,gabillon2012best}. In this test, we apply SR to treatments under the same context while equally distributing the simulation budget among different contexts. Under each context, the simulation budget available $n/m$ is divided into $k-1$ phases. Every treatment that has not been rejected receives $n_{(i)}-n_{(i-1)}$ more replications and the estimated worst treatment is rejected in phase $i$, $i=1,\dots,k-1$.
	
	\item \emph{Optimal computing budget allocation with equal allocation among contexts} (Equal OCBA). Similarly as SR, the original OCBA was designed for a single context. In this test, we apply OCBA to treatments under the same context while equally distributing the simulation budget among different contexts:
	\begin{align*}
		&\frac{n_{i_1,j}}{n_{i_2,j}} =  \frac{ \sigma^2_{i_1}(\x_j) }{ \sigma^2_{i_2}(\x_j) } \frac{\big( y_{i^*(\x_j)}(\x_j) - y_{i_2}(\x_j) \big)^2 }{  \big( y_{i^*(\x_j)}(\x_j) - y_{i_1}(\x_j) \big)^2 } , \ i_1,i_2 \in \{1,2,\cdots,k\}\backslash \{ i^*(\x_j) \}, \ j = 1,2,\cdots,m,  \\
		&n_{i^*(\x_j),j} = \sigma_{i^*(\x_j)}(\x_j) \left( \sum_{i=1,i\ne i^*(\x_j)}^k  \left( \frac{n_{i,j}}{\sigma_{i}(\x_j)} \right)^2  \right)^{\frac{1}{2}}, \ j = 1,2,\cdots,m, \\
		&\sum_{i=1}^k n_{i,j_1} = \sum_{i=1}^k n_{i,j_2},  \ j_1,j_2 = 1,2,\cdots,m.
	\end{align*}
	
	\item \emph{The two-stage procedure} (TS, \cite{shen2019}). TS considers R\&S in the presence of contexts and also assumes linear relationship between treatment performance and contexts as CR\&S Algorithm 2. It allocates a small fraction of the simulation budget to some selected treatment-context pairs in the first stage and based on the sample estimates, decides the number of replications these treatment-context pairs should receive in the second stage. TS is based on the IZ method. The ultimate goal of it is to make a guarantee of the quality of the selected design over the context space, instead of maximizing the quality. When stopped, the total simulation budget consumed by TS is random. To add TS into comparison, we use the allocation $\alphab_{TS}$ obtained from the first stage of TS as a reference to allocate the remaining fixed simulation budget.
	
	\item \emph{Optimal Allocation Matching (OAM, \cite{hao2020adaptive})}. OAM is an algorithm for contextual bandit problems. Suppose $y_i(\x) = \bbf(\x)^\top \bbeta_i$, $i=1,\dots,k$. OAM shows that the optimal budget allocation of contextual bandits satisfies
	\begin{align*}
		&\inf_{\alpha_{i,j} \in [0,\infty]}  \ \sum_{j=1}^m \sum_{i=1}^k \tilde{\alpha}_{i,j} (y_{i}(\x_j) - y_{i^*(\x_j)} (\x_j)), \\
		&s.t. \ \bbf(\x_j)^\top \left( \sum_{j'=1}^m \alpha_{i,j'} \bbf(\x_{j'}) \bbf(\x_{j'})^\top  \right)^{-1} \bbf(\x_j) \le \frac{(y_{i}(\x_j) - y_{i^*(\x_j)} (\x_j))^2}{2}, \forall j \ne i^*(\x_j), \ i=1,\dots,k.
	\end{align*}
	Intuitively, the left-hand side of the constraint represents the width of the confidence interval to compare $\bbf(\x_j)^\top \widehat{\bbeta}_i$ and $\bbf(\x_j)^\top \widehat{\bbeta}_{i^*(\x_j)}$. In each iteration of OAM, with the given context, it decides which treatment to sample based on an approximated optimal allocation. As discussed in Section 1, contextual bandit algorithms (including OAM) do not decide which context to sample. In this test, we set each context to be sampled with the same probability of $1/m$ for OAM.
\end{itemize}

The test will be conducted on the benchmark functions below, where $\x=(x_1,\ldots,x_d)^\top\in \RR^d$ is the context, $\z=(z_1,\ldots,z_d)^\top\in \RR^d$ is the solution for the benchmark function (treatments in PM), and $\epsilon$ is a normally distributed noise that is independent across different solutions, contexts and simulation replications.
\begin{enumerate}
	\item Sphere function:
	$ Y(\z,\x) = f(\z,\x) + \epsilon =  \sum_{l=1}^{d} (z_l-x_l)^2+ \epsilon. $
	The global minimum of $f(\z,\x)$ is 0 obtained at $\z=\x$. We consider the one dimensional case ($d=1$) of this problem with 4 contexts $\x \in \{-0.45, \ -0.15, \  0.15, \ 0.45\}$ and 11 solutions $\z \in \{ -1.25, \ -1.00, \ -0.75, \  \cdots, 1.25 \}$. The noise $\epsilon$ follows the normal distribution $N(0, 0.05 )$.
	\item Rosenbrock function:
	$ Y(\z,\x) = f(\z,\x)+ \epsilon =  \sum_{l=1}^{d-1} \Big[ 100\big((z_{l+1}-x_{l+1}) - (z_l-x_l)^2\big)^2 + \big( 1-(z_l-x_l) \big)^2 \Big]  + \epsilon. $
	The global minimum of $f(\z,\x)$ is 0 obtained at $z_l = x_l + 1$, $l = 1,2,\cdots,d$, $d >1$. We consider the two dimensional case ($d=2$) of this problem with 25 contexts $\x \in \{-0.30, \ -0.15, \ 0, $ $0.15, \ 0.30 \} \times \{-0.30, \ -0.15, \ 0, \ 0.15, \ 0.30 \}$ and 9 solutions $\z \in \{ 0, \  0.75, \ 1.5 \} \times \{ 0, \ 0.75, \ 1.5 \}$. The noise $\epsilon$ follows the normal distribution $N(0, 2.25 )$.
	\item Randomly generated linear functions:
	$ Y(\z,\x) = f(\z,\x)+ \epsilon = \bbeta(\z)^\top (1,\x^\top)^\top + \epsilon, $
	where components of $\bbeta(\z)$ are randomly generated from $\text{Unif}[0,5]$ and $\epsilon$ follows the normal distribution $N(0, 1)$. We consider context space dimensions $d=1$ and $d=3$, $4^d$ contexts $\x \in \{0,\frac{1}{3},\frac{2}{3},1\}^d$ and 5 solutions $\z \in \{ 1, \ 2,  \cdots, 5 \}$.
\end{enumerate}

In the first two examples, we modified the original benchmark functions $f(\z)$ to $f(\z-\x)$ to incorporate context $\x$. These two examples align with the structure of the small-scale problem, and will be used to compare Equal Allocation, Equal SR, Equal OCBA, OAM and CR\&S Algorithm 1. The third example is not a typical benchmark function. It is built with a linear structure that aligns with the large-scale problem. It will be used to compare Equal Allocation, Equal Allocation, OAM, TS, and CR\&S Algorithms 1 and 2.

\begin{figure}
	\centering
	\caption{Comparison on the benchmark functions}
	\begin{minipage}{19.5cm}
		\hspace*{-2cm}\includegraphics[width=19.5cm,height = 5cm]{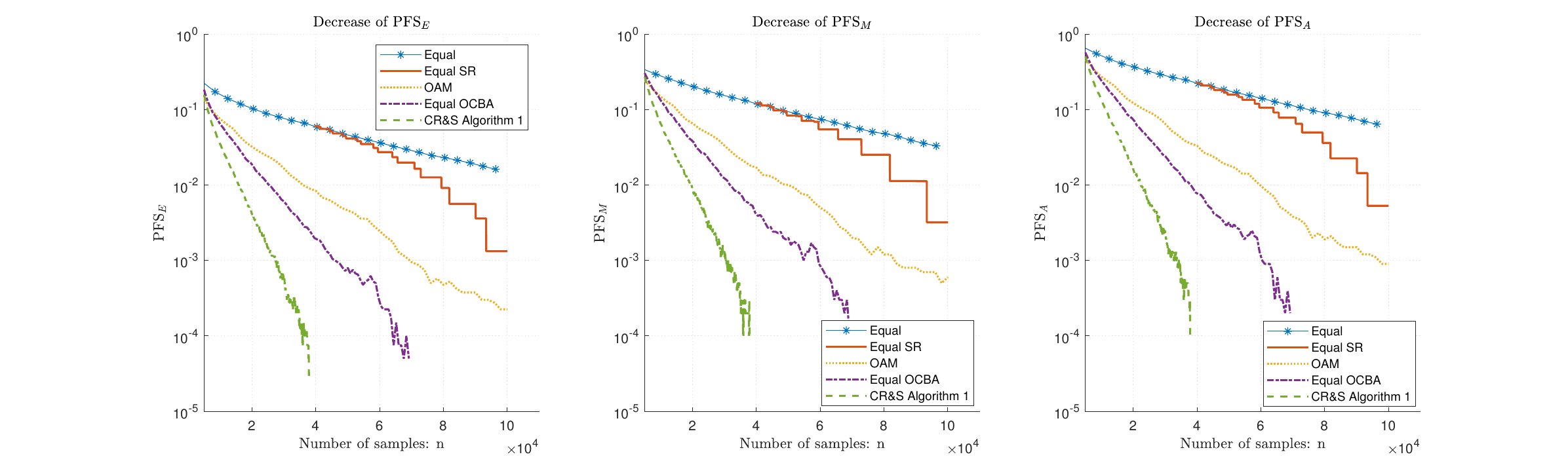}
	\end{minipage}
	
	\begin{minipage}{19.5cm}
		\hspace*{-2cm}\includegraphics[width=19.5cm,height = 5cm]{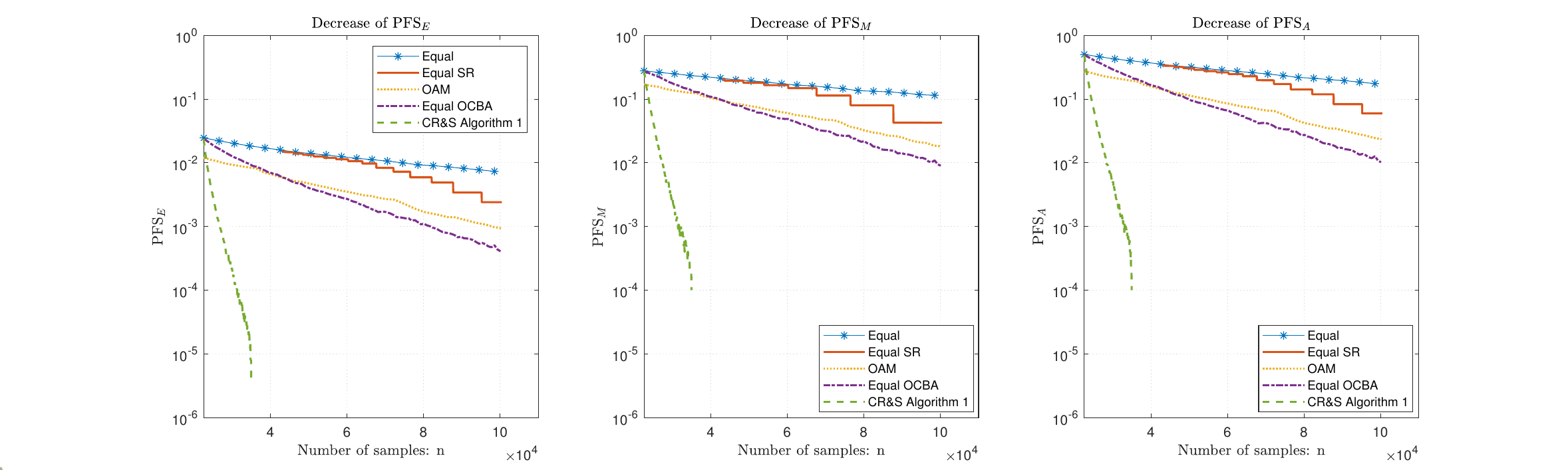}
	\end{minipage}
	
	\begin{minipage}{19.5cm}
		\hspace*{-2cm}\includegraphics[width=19.5cm,height = 5cm]{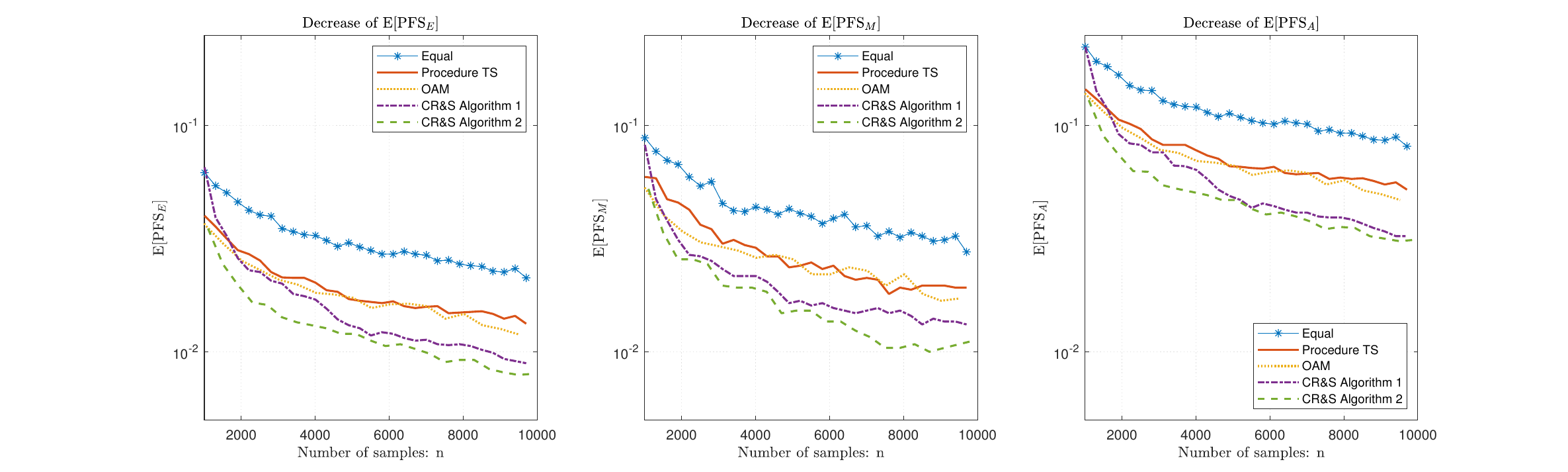}
	\end{minipage}
	
	\begin{minipage}{19.5cm}
		\hspace*{-2cm}\includegraphics[width=19.5cm,height = 5cm]{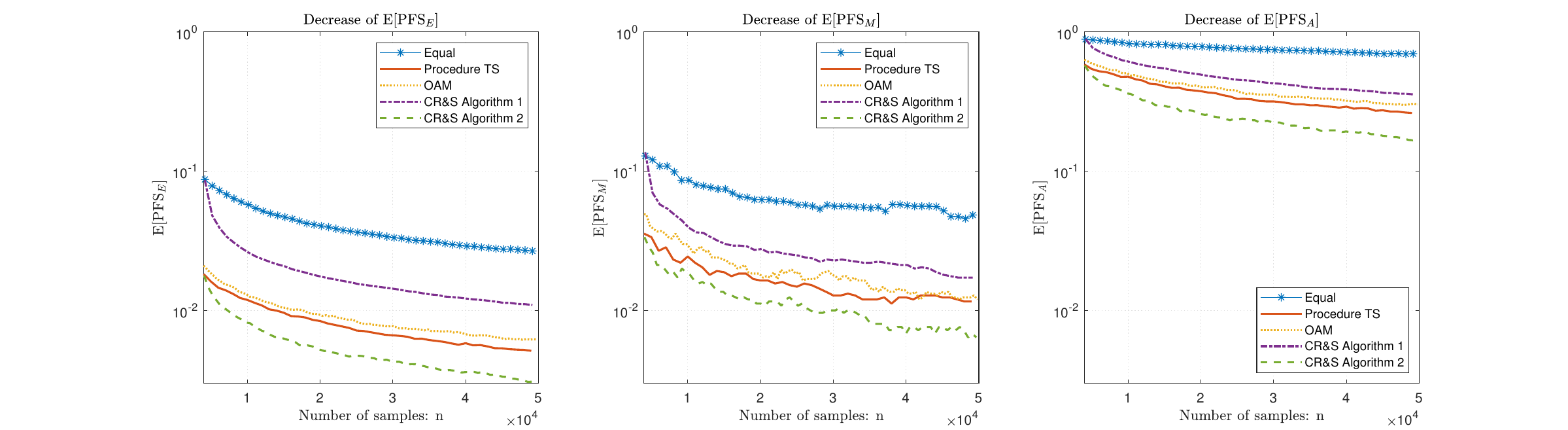}
	\end{minipage}
	
	\flushleft
	\vspace{0.2cm}
	{\small \emph{Notes. From top to bottom, benchmark functions being tested are the sphere function, Rosenbrock function, and 1-dimensional and 3-dimensional randomly generated linear functions.}}
	\label{CTest}
\end{figure}

We assess the average performances of the compared algorithms based on $10^4$ macro-replications for the sphere and Rosenbrock functions and $2,500$ macro-replications for the randomly generated linear functions. Figure \ref{CTest} shows the comparison result under different simulation budgets. The four rows in Figure \ref{CTest} correspond to the the sphere function, Rosenbrock function, and 1-dimensional and 3-dimensional randomly generated linear functions. The three columns correspond to the three measures under study. Since the linear functions in each macro-replication are randomly generated instead of being fixed, the average performances of the algorithms in the third and fourth rows are with respect to randomness from the function instances and simulation noises.

The proposed CR\&S Algorithm 1 performs the best under the sphere and Rosenbrock functions and the three measures, followed by Equal OCBA, OAM and Equal SR. The advantage of CR\&S Algorithm 1 is particularly big under the Rosenbrock function. Although Equal OCBA and Equal SR have been shown to be efficient for R\&S problems, they do not have any good mechanisms to balance the budgets allocated among contexts, causing the performances of them inferior to CR\&S Algorithm 1. The Equal Allocation performs the worst.

Under the two linear functions, CR\&S Algorithm 2 performs the best. When the context space is 1-dimensional, CR\&S Algorithm 1 outperforms TS and OAM. The goal of TS is to guarantee the quality of the estimated best treatment under each context. The budget allocation of it is not so effective in optimizing the quality of the estimated best treatments as CR\&S Algorithm 1. OAM lacks efficient mechanisms to balance the simulation budgets allocated among contexts. When the context space is 3-dimensional, TS and OAM outperform CR\&S Algorithm 1. Although the budget allocations of TS and OAM are not optimal for the large-scale problem, they have a major advantage over CR\&S Algorithm 1 in that they utilize prediction models. This advantage becomes more obvious when the total number of contexts is larger, as with the 3-dimensional context space. The Equal Allocation again performs the worst.

\subsection{Case Studies}

In this test, we apply our proposed algorithms to two real-world PM problems, namely the prevention of cervical cancer \citep{Levinvac2015} and treatment of chronic obstructive pulmonary disease \citep{hoogendoorn2019broadening,corro2020address}. Due to the space limitation, here we will only provide the numerical results for the cervical cancer example, and leave the test of the chronic obstructive pulmonary disease in Section \ref{sec:copd} of Appendix.

Cervical cancer is the fifth leading cause of cancer globally. Unlike most cancers, cervical cancer has only one direct cause: the human papillomavirus (HPV), and is thus preventable. While the widespread screening practice has led to a dramatic decrease in the cervical cancer mortality in developed countries, the cost of it is prohibitive, especially for women with low income in developing countries \citep{Levinvac2015}.

The incidence rate of cervical cancer evolves with the age and reaches the peak at around 45 \citep{globocan}. There are two ways to prevent the progression of it: the traditional screening and the newly-invented HPV vaccine. Traditional screening programs (distinguished by the frequency) conduct the examination at regular time points \citep{maclayvac2010}, including no screening, low-frequency screening (two times per lifetime at age 35 and 45), and high-frequency screening (one screening every three years from 30 to 60).

The HPV vaccine came to the market in recent years and is usually expensive. Despite of the high price, the vaccine could effectively prevent the infection of the most risky types of HPV (e.g. HPV 16/18), and the immunization period is life-long. The perfect time for HPV vaccination is before the start of any sexual behaviors (usually at age 12) \citep{westravac2011}. The decision on HPV vaccination is a tradeoff between the current economic loss and future risk. Vaccination or not, combined with the screening policy, forms six possible treatment methods: HPV vaccination alone, HPV vaccination with a low-frequency screening, HPV vaccination with a high-frequency screening, low-frequency screening alone, high-frequency screening alone, and no-prevention.

\begin{figure}[p]
	\centering
	\includegraphics[width=15cm]{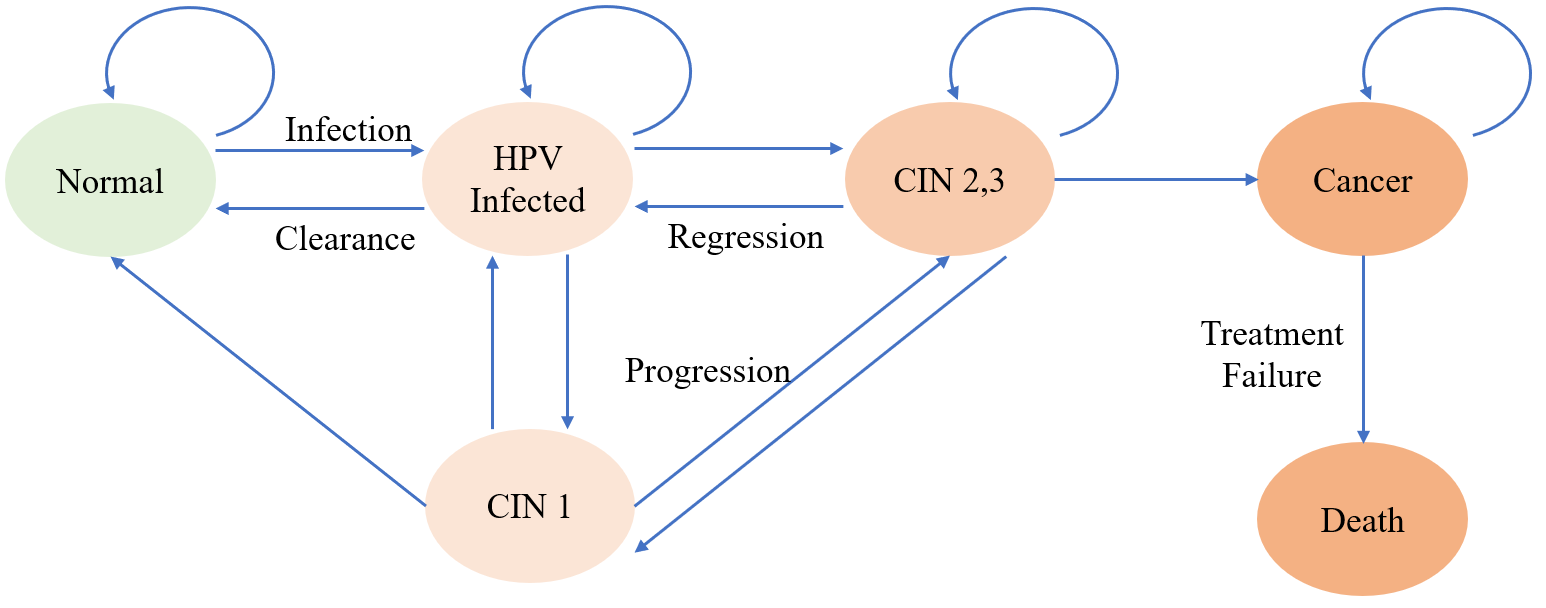}
	\caption{Simulation model for the cervical cancer} \label{vaccinediag}
	\flushleft
	\vspace{0.2cm}
	{\small \emph{Notes. This figure is adapted from \cite{Levinvac2015}. Each state may suffer from age-related all-cause mortality. ``Normal" state means the individual is not infected; ``CIN 1" means the individual has cervical intraepithelial neoplasia at grade 1; ``CIN 2,3" means the individual has cervical intraepithelial neoplasia at grade 2 or 3.}}
\end{figure}

\begin{figure}[p]
	\centering
	\includegraphics[width=15cm]{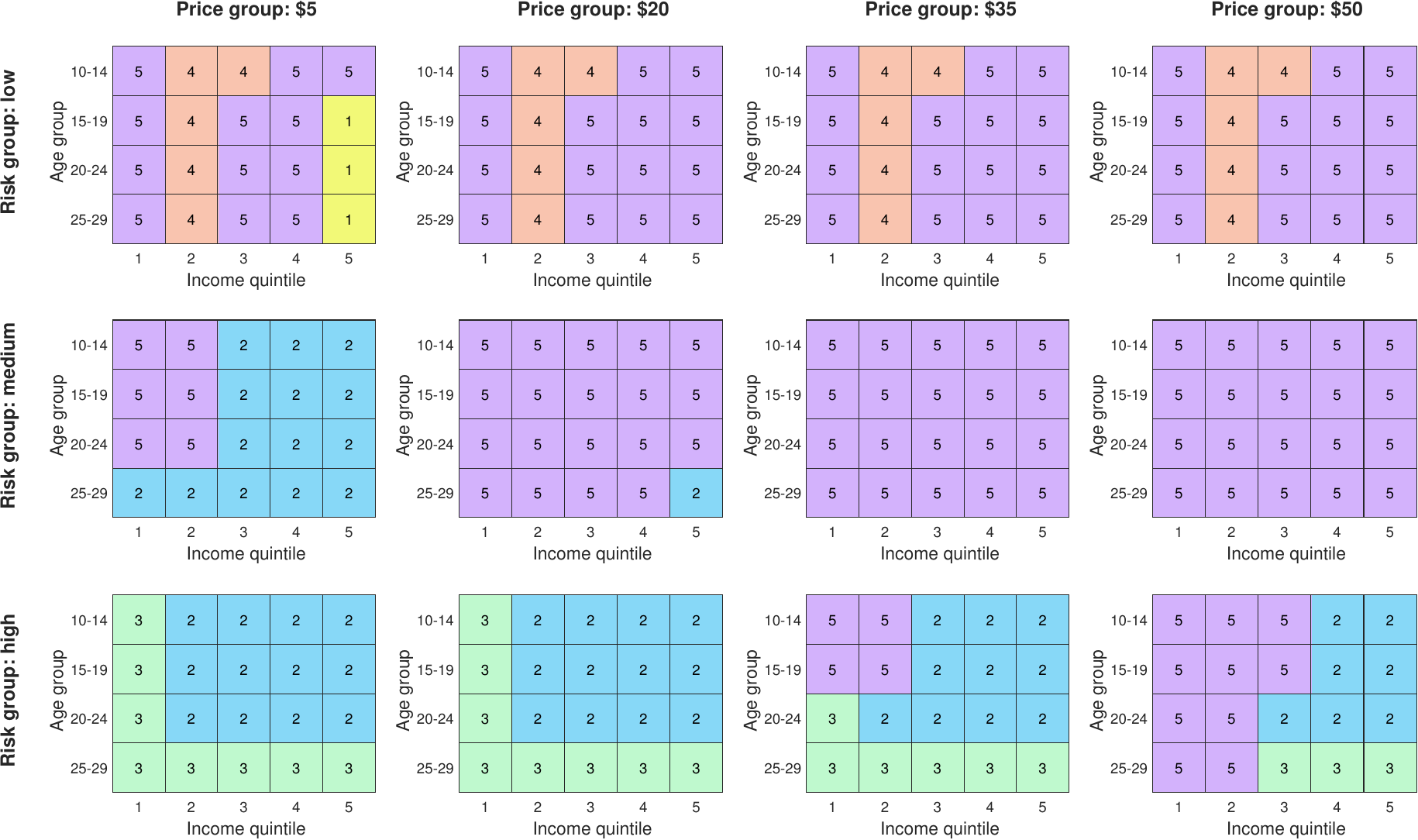}
	\caption{Medical decision map for the cervical cancer prevention problem } \label{vaccine}
	\flushleft
	\vspace{0.2cm}
	{\small \emph{Notes. In each sub-figure, the horizontal and vertical axes represent the income quintiles and age groups respectively. Numbers in cells show the best treatment method under different contexts. Specifically, numbers 1-6 mean HPV vaccination alone, HPV vaccination with a low-frequency screening, HPV vaccination with a high-frequency screening, low-frequency screening alone, high-frequency screening alone, and no-prevention.} }
\end{figure}

The simulation model of this problem is developed based on the Markov chain in Figure \ref{vaccinediag}. Cervical intraepithelial neoplasia at grade 1 (CIN 1) and its deteriorating grades (CIN 2,3) result from the human papillomavirus (HPV) infection, and they may regress to the normal state. However, once the lesions are at CIN 2,3 and are not detected, the illness would develop into cancer at substantial risk.

The context variables we consider include income, age and HPV progression risk of the patients and the price of HPV vaccine. The vaccine is assumed to have four possible prices: \$5, \$20, \$35, and \$50. Income is classified into five levels, representing the five income quintiles in a population. Age is classified into four five-year groups (11-15, 16-20, 21-25, 26-30). The HPV progression risk has low, medium and high levels, corresponding to different multipliers on the baseline progression rate. As a result, we have 240 possible contexts. The cancer treatment cost and state transition probabilities for each context are determined based on \cite{Levinvac2015}. The treatment performance is measured by the expected cost per quality-adjusted life years (QALY).

We apply CR\&S Algorithm 1 to this problem. The algorithm determines the number of simulation replications for each treatment-context pair and based on it, estimates the best treatment under each context. The result is reported in Figure \ref{vaccine}. It can be observed that the HPV vaccine with a price higher than \$20 is the best treatment for high-risky women only. The cost per QALY of it is too high for medium and low-risky women. The best treatment for individuals with high income is mostly vaccination-based. In terms of age, the best treatment for individuals at age 11-20 does not involve vaccination, while for individuals at age 21-30, the best treatment becomes vaccination-based. This is because a female individual is most likely to get the cervical cancer between 30-60. When they are at age 11-20, they are not exposed to the high risk of it, and there is no need for HPV vaccination. When they are at age 21-30, although there is a certain probability of failure in getting immunization from the vaccines, the cancer prone period that is coming soon makes the vaccination-based treatment methods the best choices for them.

\section{Conclusions and Discussion}
\label{sec:con}

In this study, we consider the problem of personalized medicine. We adopt the tool of simulation for assessing the performances of the treatment methods, and aim to efficiently utilize the computing time to select the best treatment for each patient context that might appear. To do so, we start by introducing three measures for evaluating the evidence of correct selection over the context space and showing that these measures have the same convergence rate function. Next, we propose two simulation budget allocation models that are appropriate for small and large context spaces. For the two models, we identify the rate-optimal budget allocation rules that optimize the rate function, develop convenient selection algorithms for implementation and show the consistency of the algorithms. A series of numerical experiments on benchmark functions and real-world problems demonstrate the superior empirical performances of the proposed algorithms.

In this research, we have focused on the one-time treatment, where only one treatment decision is expected to be made for the patients. In practice, there is a class of PM problems that require multiple decisions during the progression of the disease, and the goal is to find the optimal treatment policy that maximizes the cumulative rewards over the decision periods \citep{negoescu2018dynamic,lee2019optimal}. These problems are based on more complex context and decision structures, and our proposed CR\&S algorithms cannot be applied in general. This is a good future research direction. In terms of methodology, we have solved the PM problem based on the OCBA method. Recently, \cite{russo2020simple} proposed three simple context-free Bayesian algorithms under a top-two framework for BAI, which have been shown to have nice theoretical properties and empirical performances. We believe it is also a promising research avenue to extend the top-two framework and algorithms to the PM problems.

\newpage
\appendix
{\noindent \LARGE \textbf{Appendix}}

This document provides additional numerical results and proofs of the theorems for the paper ``A Contextual Ranking and Selection Method for Personalized Medicine".

\section{Case study: Chronic Obstructive Pulmonary Disease}\label{sec:copd}

More than 2\% of the total population worldwide suffers from chronic obstructive pulmonary disease (COPD). Symptoms of COPD include long-term breathlessness, cough, and sputum production. The progression of COPD is described in Figure \ref{copddiag}. A COPD patient faces three adverse events in the health state transition: exacerbation, pneumonia, and death. The transition is random and depends on the current health state of the patient. If the patient can survive an adverse event, it is still possible for him/her to face the recurrence of the same event. Thus, the occurrence of events divides a patient's life into irregular and random time intervals. Parameters in the distributions of the time intervals can be predicted by the patient's health state via regression models provided in \cite{hoogendoorn2019broadening,corro2020address}.

\begin{figure}[p]
	\centering
	\includegraphics[width=14cm]{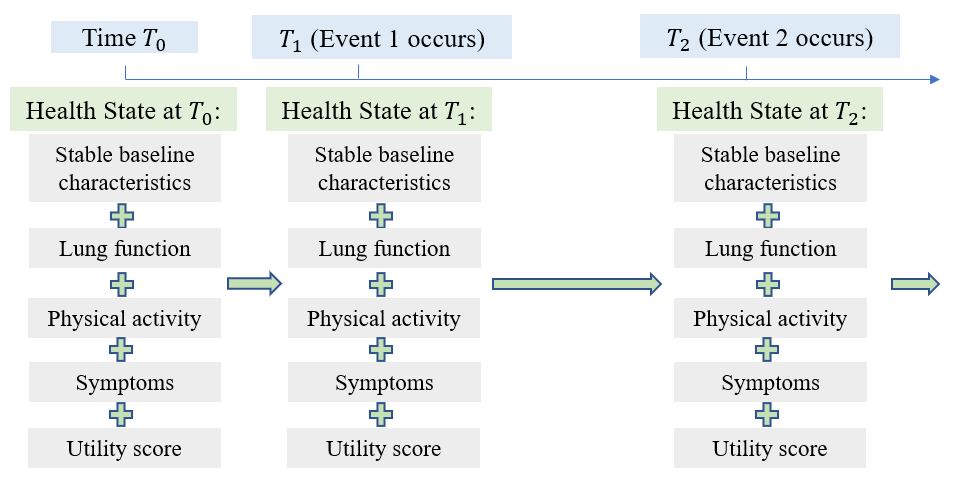}
	\caption{Simulation model for the chronic obstructive pulmonary disease } \label{copddiag}
	\flushleft
	\vspace{0.2cm}
	{\small \emph{Notes. This figure is adapted from \cite{hoogendoorn2019broadening}. The ``stable baseline characteristics" include age, number of packs smoked each year, BMI, and gender.}}
\end{figure}

\begin{figure}[p]
	\centering
	\includegraphics[width=15cm]{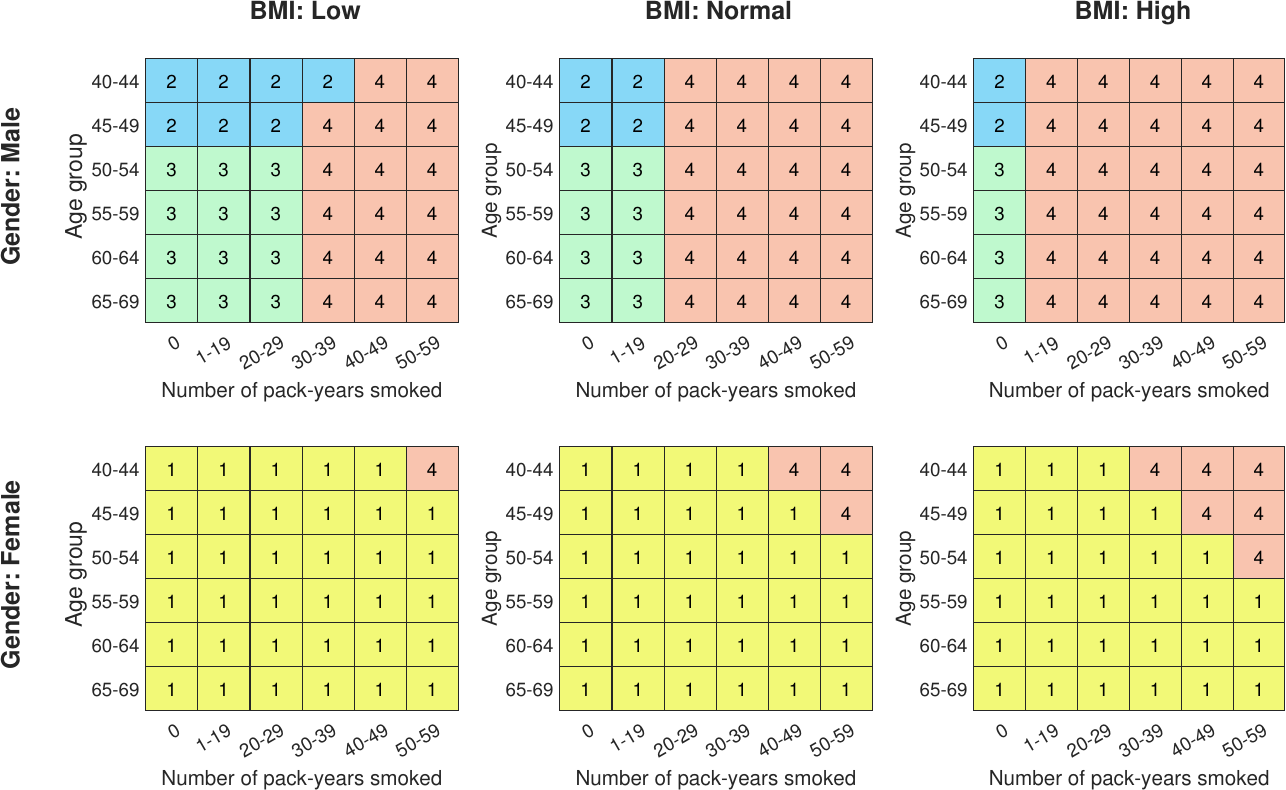}
	\caption{Medical decision map for the chronic obstructive pulmonary disease } \label{COPDfig}
	\flushleft
	\vspace{0.2cm}
	{\small \emph{Notes. In each sub-figure, the horizontal and vertical axes represent the number of pack-years smoked and age group. Numbers in cells show the best treatment method under different contexts. Specifically, numbers 1-4 mean reducing the decline rate in lung function by 10\%, increasing the time to exacerbation by 60\%, improving the physical activity level by 2.5 points, and reducing the probability of having cough/sputum by 50\%.} }
\end{figure}

So far, COPD has no cure, making proper health management especially important. Four treatment methods can be adopted to improve the patients' quality of life \citep{hoogendoorn2019broadening,corro2020address}: reducing the decline rate in lung function by 10\%, increasing the time to exacerbation by 60\%, improving the physical activity level by 2.5 points, and reducing the probability of having cough/sputum by 50\%. Let $\mathbf{X}=(X_1,X_2,X_3,X_4)^{\mathrm{T}}$ denote the context variables of the disease, where $X_1$ is the initial age of developing into COPD (an older age has a higher natural death rate), $X_2$ is the number of packs smoked each year, $X_3$ is the BMI (patients with a higher BMI usually have a better lung function), and $X_4$ is the gender. According to \cite{corro2020address}, the number of packs smoked each year could be 0 (corresponds to non-smokers), 1-19, 20-29, 30-39, 40-49, and 50-59. Age is partitioned into six five-year groups: 40-44, 45-49, 50-54, 55-59, 60-64, 65-69. The BMI has three classes: low, normal and high, and the gender can be male and female. In total, there are 216 contexts. The state transition probabilities are taken from \cite{hoogendoorn2019broadening} and \cite{corro2020address}. The treatment performance is measured by QALY.

We apply CR\&S Algorithm 2 to this problem. The algorithm determines the number of simulation replications for treatments under some selected constexts and based on it, estimates the best treatment under each context. The result is reported in Figure \ref{COPDfig}. It can be observed that for male patients, the best treatment varies a lot among the other contexts. For male patients who smoke more than 30 packs each year, the best treatment is reducing the probability of having cough/sputum, while for the rest male patients, the best treatment is increasing the time to exacerbation if the patient is younger (40-49), and is improving the physical activity level if the patient is older (50-69). For female patients, the treatment of reducing the decline rate in lung function is the best for almost all contexts, except that for female patients who are younger (40-44) and smoke more (the number of packs smoked each year is 50-59), the best treatment is reducing the probability of having cough/sputum. When the BMI of the patient if high, the best treatment is mostly reducing the probability of having cough/sputum. When the BMI is low or normal, the best treatment can be any of the four methods depending on the contexts.

\section{Proof of Theorems}

This section provides the proof of Lemma 1 and Theorems 1-5 in the main paper. 

\subsection{Proof of Theorem 1}

\begin{lemma}\label{lem:1}
	(Principle of the slowest term \citep{ganesh2004}) Consider positive sequences $a_{j}(n)$, $j=1,...,m$. If $\lim\limits_{n\rightarrow\infty}\frac{1}{n}\log a_{j}(n)$ exists for all $j$, then $\lim\limits_{n\rightarrow\infty}\frac{1}{n}\log ( \sum\limits_{j=1}^{m} a_{j}(n))=\max\limits_{j\in\{1,...,m\}}(\lim\limits_{n\rightarrow\infty}\frac{1}{n}\log a_{j}(n))$.
\end{lemma}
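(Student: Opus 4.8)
The plan is to prove the claim by a standard two-sided sandwich on the normalized logarithm, exploiting the positivity of the sequences and the finiteness of $m$. Write $L_j := \lim_{n\rightarrow\infty}\frac{1}{n}\log a_{j}(n)$ for each $j$ (each exists by hypothesis) and set $L := \max_{j\in\{1,\ldots,m\}} L_j$. Since every $a_j(n)>0$, the quantity $\log(\sum_{j=1}^m a_j(n))$ is well defined, and the goal is to show $\lim_{n\rightarrow\infty}\frac{1}{n}\log\big(\sum_{j=1}^m a_j(n)\big)=L$. I would establish the matching liminf and limsup separately.

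For the lower bound, I would simply discard all but the dominant term. Let $j^\star$ be an index attaining $L_{j^\star}=L$. Because all summands are positive, $\sum_{j=1}^m a_j(n)\geq a_{j^\star}(n)$, so monotonicity of $\log$ gives $\frac{1}{n}\log\big(\sum_{j=1}^m a_j(n)\big)\geq \frac{1}{n}\log a_{j^\star}(n)$. Taking $n\rightarrow\infty$ on the right-hand side yields $L_{j^\star}=L$, hence $\liminf_{n\rightarrow\infty}\frac{1}{n}\log\big(\sum_{j=1}^m a_j(n)\big)\geq L$.

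For the upper bound, I would bound the sum of $m$ terms by $m$ times the largest, namely $\sum_{j=1}^m a_j(n)\leq m\max_{j}a_j(n)$. Taking logarithms and dividing by $n$ gives $\frac{1}{n}\log\big(\sum_{j=1}^m a_j(n)\big)\leq \frac{\log m}{n}+\max_{j}\frac{1}{n}\log a_j(n)$, where I have used that $\log$ commutes with the finite maximum. The first term vanishes as $n\rightarrow\infty$, and the second converges to $\max_{j}L_j=L$ because the maximum of finitely many convergent real sequences converges to the maximum of their limits. Therefore $\limsup_{n\rightarrow\infty}\frac{1}{n}\log\big(\sum_{j=1}^m a_j(n)\big)\leq L$. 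Combining the two bounds shows the limit exists and equals $L$.

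The only genuine subtlety, and the step I would state carefully, is the interchange $\lim_{n}\max_{j}\frac{1}{n}\log a_j(n)=\max_{j}\lim_{n}\frac{1}{n}\log a_j(n)$; this is where finiteness of $m$ is essential, since it lets me invoke continuity of the $\max$ functional on $\mathbb{R}^m$ applied to the componentwise-convergent vector $\big(\frac{1}{n}\log a_1(n),\ldots,\frac{1}{n}\log a_m(n)\big)$. Everything else is elementary manipulation; the positivity assumption is used only to guarantee the logarithms are defined and to justify dropping terms in the lower bound.
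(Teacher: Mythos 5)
Your proof is correct. Note that the paper itself offers no proof of this lemma --- it is imported wholesale from \citep{ganesh2004} --- so there is no internal argument to compare against; your two-sided sandwich (lower bound by discarding all but a dominant term, upper bound via $\sum_{j=1}^m a_j(n)\leq m\max_j a_j(n)$ so that the $\frac{\log m}{n}$ prefactor vanishes) is precisely the standard argument given in that reference, and you correctly isolate the one step needing justification, namely the exchange of limit and finite maximum, which holds by continuity of $\max$ on $\mathbb{R}^m$ given that each $L_j$ is a finite real limit as the hypothesis intends.
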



\begin{lemma}\label{lem:2}
	\citep{hunter2013} Consider positive sequences $a_i(n)$, $i=1,2,...,k$. If $\lim_{n\rightarrow\infty}\frac{1}{n}\log a_{i}(n)$ exists for all $i$, then $\max\limits_{i\in\{1,...,k\}}(\lim\limits_{n\rightarrow\infty}\frac{1}{n}\log a_{i}(n))=
	\lim\limits_{n\rightarrow\infty}\frac{1}{n}\log(\max\limits_{i\in\{1,...,k\}}a_{i}(n)).$
\end{lemma}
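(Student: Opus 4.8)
Write $L_i:=\lim_{n\to\infty}\frac1n\log a_i(n)$ for each $i$ and $L:=\max_{i\in\{1,\ldots,k\}}L_i$; these limits exist by hypothesis. Setting $b(n):=\frac1n\log\big(\max_{i\in\{1,\ldots,k\}}a_i(n)\big)$, the plan is to prove the two matching bounds $\liminf_{n\to\infty}b(n)\ge L$ and $\limsup_{n\to\infty}b(n)\le L$, which together force $\lim_{n\to\infty}b(n)=L$ and give the claim. The lower bound is immediate: for any fixed index $i_0$ we have $\max_{i}a_i(n)\ge a_{i_0}(n)>0$, so $b(n)\ge\frac1n\log a_{i_0}(n)$, and sending $n\to\infty$ yields $\liminf_{n\to\infty}b(n)\ge L_{i_0}$; since $i_0$ is arbitrary, $\liminf_{n\to\infty}b(n)\ge L$.

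For the upper bound I would exploit that the index set is finite through the elementary sandwich
\begin{equation*}
	\max_{i\in\{1,\ldots,k\}}a_i(n)\ \le\ \sum_{i=1}^k a_i(n)\ \le\ k\,\max_{i\in\{1,\ldots,k\}}a_i(n).
\end{equation*}
Fix $\epsilon>0$. By the definition of $L_i$, there is an $N$ such that for all $n\ge N$ and every $i$ we have $\frac1n\log a_i(n)\le L_i+\epsilon\le L+\epsilon$, equivalently $a_i(n)\le e^{n(L+\epsilon)}$. Plugging this into the left inequality of the sandwich gives $\max_i a_i(n)\le k\,e^{n(L+\epsilon)}$, hence $b(n)\le\frac1n\log k+L+\epsilon$ for $n\ge N$. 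Taking $\limsup_{n\to\infty}$ annihilates the $\frac1n\log k$ term, so $\limsup_{n\to\infty}b(n)\le L+\epsilon$, and letting $\epsilon\downarrow0$ yields $\limsup_{n\to\infty}b(n)\le L$.

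The main obstacle is precisely this upper bound: the index realizing the maximum can change with $n$, so one cannot simply track a single sequence $a_{i^*}(n)$ across $n$. The device that sidesteps it is passing to the sum together with a threshold $N$ that is uniform in $i$ — legitimate only because there are finitely many sequences, so the finitely many indices $N_i$ may be replaced by their maximum and imposed simultaneously. Combining the two bounds gives $\liminf_{n\to\infty}b(n)=\limsup_{n\to\infty}b(n)=L$, i.e. $\lim_{n\to\infty}\frac1n\log(\max_i a_i(n))=\max_i L_i$, as asserted. I note that this argument rests on the same sandwich that underlies the principle of the slowest term (Lemma \ref{lem:1}), which explains why the $\max$ and the sum exhibit identical exponential rates.
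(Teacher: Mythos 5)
Your proof is correct, but note that the paper itself supplies no argument for this lemma at all --- it is imported verbatim from \cite{hunter2013} (as Lemma \ref{lem:1} is from \cite{ganesh2004}), so there is no in-paper proof to match. As a self-contained verification your two-sided argument is sound: the lower bound via $\max_i a_i(n)\ge a_{i_0}(n)$ is immediate, and the upper bound correctly handles the one real subtlety, namely that the maximizing index may wander with $n$, by choosing the threshold $N$ uniformly over the finitely many indices. One small observation: the detour through the sum is unnecessary for this lemma. Once you have $a_i(n)\le e^{n(L+\epsilon)}$ for all $i$ and all $n\ge N$, you get $\max_i a_i(n)\le e^{n(L+\epsilon)}$ directly, with no factor of $k$ to annihilate; the sandwich $\max_i a_i(n)\le\sum_i a_i(n)\le k\max_i a_i(n)$ is exactly what you need if you want the stronger conclusion that the \emph{sum} also has rate $L$, i.e.\ Lemma \ref{lem:1}. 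So your argument, essentially unchanged, proves both cited lemmas simultaneously --- the factor $k$ and the $\tfrac1n\log k$ term are the price of the sum, not of the max --- which is a slightly stronger and arguably cleaner packaging than treating the two statements as separate black boxes, as the paper does.
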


We first analyze $\pfs_{\Erm}$.
{\small \begin{align}
		&\lim_{n\rightarrow\infty}\frac{1}{n}\log \pfs_{\Erm}= \lim_{n\rightarrow\infty}\frac{1}{n}\log \Big(\sum_{j=1}^m p_j \pfs(\x_j)\Big)
		=\lim_{n\rightarrow\infty}\frac{1}{n}\log \left(\sum_{j=1}^m p_j \PP\Bigg(\bigcup_{i\neq i^*(\x_j)} \Big(\hat{y}_{i^*(\x_j)} (\x_j)\geq\hat{y}_{i} (\x_j)\Big)\Bigg)\right)\nonumber\\
		=&\max_{j\in\{1,...,m\}} \lim_{n\rightarrow\infty}\frac{1}{n}\log \left( p_j \PP\Bigg(\bigcup_{i\neq i^*(\x_j)} \Big(\hat{y}_{i^*(\x_j)} (\x_j)\geq\hat{y}_{i} (\x_j)\Big)\Bigg)\right)\nonumber\\
		=&\max_{j\in\{1,...,m\}} \lim_{n\rightarrow\infty}\frac{1}{n}\log \PP\Bigg(\bigcup_{i\neq i^*(\x_j)} \Big(\hat{y}_{i^*(\x_j)} (\x_j)\geq\hat{y}_{i} (\x_j)\Big)\Bigg).\label{eq:a1}
\end{align}}
The penultimate step is from Lemma \ref{lem:1}. Note that
{\small \begin{align*}
		1
		\leq \PP\Bigg(\bigcup_{i\neq i^*(\x_j)} \Big(\hat{y}_{i^*(\x_j)} (\x_j)\geq\hat{y}_{i} (\x_j)\Big)\Bigg) / \max_{i\neq i^*(\x_j)}\PP\Bigg(\hat{y}_{i^*(\x_j)} (\x_j)\geq\hat{y}_{i} (\x_j)\Bigg)
		\leq (k-1),
\end{align*}}
and that by Lemma \ref{lem:2},
{\small \begin{align}
		&\lim_{n\rightarrow\infty}\frac{1}{n}\log \left((k-1)\max_{i\neq i^*(\x_j)}\PP\Bigg(\hat{y}_{i^*(\x_j)} (\x_j)\geq\hat{y}_{i} (\x_j)\Bigg)\right) \nonumber \\
		=&\lim_{n\rightarrow\infty}\frac{1}{n}\log \max_{i\neq i^*(\x_j)}\PP\Bigg(\hat{y}_{i^*(\x_j)} (\x_j)\geq\hat{y}_{i} (\x_j)\Bigg)
		=\max_{i\neq i^*(\x_j)} \lim_{n\rightarrow\infty}\frac{1}{n}\log \PP\Bigg(\hat{y}_{i^*(\x_j)} (\x_j)\geq\hat{y}_{i} (\x_j)\Bigg) \nonumber \\
		=&\max_{i\neq i^*(\x_j)} -\Gcal_{i^*(\x_j),i,j}(\alpha_{i^*(\x_j),j},\alpha_{i,j})
		=-\min_{i\neq i^*(\x_j)}\Gcal_{i^*(\x_j),i,j}(\alpha_{i^*(\x_j),j},\alpha_{i,j}). \label{eq:loglim}
\end{align}}
Then,
{\small \begin{equation}\label{eq:a2}
		\lim_{n\rightarrow\infty}\frac{1}{n}\log \PP\Bigg(\bigcup_{i\neq i^*(\x_j)} \Big(\hat{y}_{i^*(\x_j)} (\x_j)\geq\hat{y}_{i} (\x_j)\Big)\Bigg)= -\min_{i\neq i^*(\x_j)}\Gcal_{i^*(\x_j),i,j}(\alpha_{i^*(\x_j),j},\alpha_{i,j}).
\end{equation}}
By (\ref{eq:a1}) and (\ref{eq:a2}), we have
{\small $
	\lim\limits_{n\rightarrow\infty}\frac{1}{n}\log \pfs_{\Erm}=-\min\limits_{j\in\{1,...,m\}}\min\limits_{i\neq i^*(\x_j)}\Gcal_{i^*(\x_j),i,j}(\alpha_{i^*(\x_j),j},\alpha_{i,j}).
	$}

We next consider $\pfs_{\Mrm}$. Since $\pfs_{\Mrm} = \max_{j\in\{1,...,m\}}\pfs(\x_j)$, we have
{\small \begin{align}
		&\lim_{n\rightarrow\infty}\frac{1}{n}\log \pfs_{\Mrm}
		=\lim_{n\rightarrow\infty}\frac{1}{n}\log \left(\max_{j\in\{1,...,m\}} \PP\Bigg(\bigcup_{i\neq i^*(\x_j)} \Big(\hat{y}_{i^*(\x_j)} (\x_j)\geq\hat{y}_{i} (\x_j)\Big)\Bigg)\right)\nonumber\\
		=&\max_{j\in\{1,...,m\}} \lim_{n\rightarrow\infty}\frac{1}{n}\log\PP\Bigg(\bigcup_{i\neq i^*(\x_j)} \Big(\hat{y}_{i^*(\x_j)} (\x_j)\geq\hat{y}_{i} (\x_j)\Big)\Bigg)
		=-\min_{j\in\{1,...,m\}} \min_{i\neq i^*(\x_j)}\Gcal_{i^*(\x_j),i,j}(\alpha_{i^*(\x_j),j},\alpha_{i,j}), \nonumber
\end{align}}
where the penultimate step is from Lemma \ref{lem:2} and the last step is from (\ref{eq:a2}).

Last, we consider
$
\pfs_{\Arm}=\PP\left(\bigcup\limits_{j=1}^m\bigcup\limits_{i\neq i^*(\x_j)} \Big(\hat{y}_{i^*(\x_j)} (\x_j)\geq \hat{y}_{i} (\x_j)\Big)\right).
$
Similarly as in the analysis for $\pfs_{\Erm}$,
\begin{align*}
	&\max_{j\in\{1,...,m\}}\max_{i\neq i^*(\x_j)}\PP\Bigg(\hat{y}_{i^*(\x_j)} (\x_j)\geq\hat{y}_{i} (\x_j)\Bigg)
	\leq\PP\left(\bigcup_{j=1}^m\bigcup_{i\neq i^*(\x_j)} \Big(\hat{y}_{i^*(\x_j)} (\x_j)\geq \hat{y}_{i} (\x_j)\Big)\right)\nonumber\\
	\leq& m(k-1)\max_{j\in\{1,...,m\}}\max_{i\neq i^*(\x_j)}\PP\Bigg(\hat{y}_{i^*(\x_j)} (\x_j)\geq\hat{y}_{i} (\x_j)\Bigg).
\end{align*}
Similar to \eqref{eq:a2}, we have
$
\lim\limits_{n\rightarrow\infty}\frac{1}{n}\log \pfs_{\Arm}
= -\min\limits_{j\in\{1,...,m\}}\min\limits_{i\neq i^*(\x_j)}  \Gcal_{i^*(\x_j),i,j}(\alpha_{i^*(\x_j),j},\alpha_{i,j}). \label{eq:a4}
$

\subsection{Proof of Theorem 2}

According to the KKT conditions, there exist constants $\theta$ and $\lambda_{i,j}$ for $j=1,2,...,m$, $i=1,2,...,k$ and $i\neq i^*(\x_j)$ such that
\begin{align}
	& 1-\sum_{j=1}^m \sum_{i=1,i\neq i^*(\x_j)}^k \lambda_{i,j}=0, \label{eq:a6}\\
	& \sum_{i=1,i\neq i^*(\x_j)}^k\lambda_{i,j} \frac{\partial \Gcal_{i^*(\x_j),i,j}(\alpha_{i^*(\x_j),j},\alpha_{i,j})}{\partial \alpha_{i^*(\x_j),j}}=\theta,
	\ \ \ j=1,2,...,m, \label{eq:a7}\\
	& \lambda_{i,j} \frac{\partial \Gcal_{i^*(\x_j),i,j}(\alpha_{i^*(\x_j),j},\alpha_{i,j})}{\partial \alpha_{i,j}}=\theta,
	\ \ \ j=1,2,...,m, i=1,2,...,k \text{ and } i\neq i^*(\x_j), \label{eq:a8}\\
	& \lambda_{i,j}(\Gcal_{i^*(\x_j),i,j}(\alpha_{i^*(\x_j),j},\alpha_{i,j})-z)=0, \ \ \ j=1,2,...,m, i=1,2,...,k \text{ and } i\neq i^*(\x_j). \label{eq:a9}
\end{align}

From (\ref{eq:a7}), all the $\lambda_{i,j}$'s are non-positive or non-negative at the same time, and from (\ref{eq:a6}), $\lambda_{i,j}\geq0$ for $j=1,2,...,m$, $i=1,2,...,k$ and $i\neq i^*(\x_j)$. If we assume that there exist some $j\in\{1,2,...,m\}$ and $i\in\{1,2,...,k\}\setminus \{i^*(\x_j)\}$ such that $\lambda_{i,j}=0$, from (\ref{eq:a8}), $\theta=0$, and then all the $\lambda_{i,j}$'s are equal to 0. This is a contradiction to (\ref{eq:a6}). As a result, $\lambda_{i,j}>0$ for $j=1,2,...,m$, $i=1,2,...,k$ and $i\neq i^*(\x_j)$. From (\ref{eq:a9}), $\Gcal_{i^*(\x_j),i,j}(\alpha_{i^*(\x_j),j},\alpha_{i,j})=z$, and the statement in (4) of the main paper can be concluded.

Next, from (\ref{eq:a8}), $\lambda_{i,j}= \frac{\theta}{\partial \Gcal_{i^*(\x_j),i,j}(\alpha_{i^*(\x_j),j},\alpha_{i,j})/\partial \alpha_{i,j}}$. Substitute it into (\ref{eq:a7}),
\begin{equation}\label{eq:a10}
	\sum_{i=1,i\neq i^*(\x_j)}^k \frac{\partial \Gcal_{i^*(\x_j),i,j}(\alpha_{i^*(\x_j),j},\alpha_{i,j})/\partial \alpha_{i^*(\x_j),j}} {\partial \Gcal_{i^*(\x_j),i,j}(\alpha_{i^*(\x_j),j},\alpha_{i,j})/\partial \alpha_{i,j}}=1, \ \ j=1,2,...,m.
\end{equation}
Result (3) of the main paper follows from (\ref{eq:a10}) because
\begin{align*}
	\frac{\partial \Gcal_{i^*(\x_j),i,j}(\alpha_{i^*(\x_j),j},\alpha_{i,j})} {\partial \alpha_{i^*(\x_j),j}}=
	&\frac{(y_i (\x_j)-y_{i^*(\x_j)} (\x_j))^2\sigma_{i^*(\x_j)}^2(\x_j)}
	{2\alpha_{i^*(\x_j),j}^2 (\sigma_{i^*(\x_j)}^2(\x_j)/\alpha_{i^*(\x_j),j}+\sigma_i^2(\x_j)/\alpha_{i,j})^2} \\
	\frac{\partial \Gcal_{i^*(\x_j),i,j}(\alpha_{i^*(\x_j),j},\alpha_{i,j})} {\partial \alpha_{i,j}}=
	&\frac{(y_i (\x_j)-y_{i^*(\x_j)} (\x_j))^2\sigma_{i}^2(\x_j)}
	{2\alpha_{i,j}^2 (\sigma_{i^*(\x_j)}^2(\x_j)/\alpha_{i^*(\x_j),j}+\sigma_i^2(\x_j)/\alpha_{i,j})^2}.
\end{align*}

\subsection{Proof of Theorem 3}

In this proof, we will append subscript $r$ to notations in CR\&S Algorithm 1 to indicate the iteration number, such as $\hat{n}_{i,j} \leftrightarrow \hat{n}_{(i,j),r}$ which is the total number of simulation replications that treatment $i$ under patient context $\x_j$ receives from iteration 0 to iteration $r$, $\hat{\alpha}_{i,j} \leftrightarrow \hat{\alpha}_{(i,j),r}$ which is $\hat{\alpha}_{(i,j),r} = \hat{n}_{(i,j),r}/n^{(r)}$ where $n^{(r)} = \sum_{j=1}^m \sum_{i=1}^k \hat{n}_{(i,j),r}$, and the sample variance $\hat{\sigma}^2_i (\boldsymbol{\mathrm{x}}_{j}) \leftrightarrow \hat{\sigma}^2_{i,r} (\boldsymbol{\mathrm{x}}_{j})$. The key estimators in CR\&S Algorithm 1 are summarized below:
\begin{align}
	&\hat{i}_r^*(\boldsymbol{\mathrm{x}}_j) \triangleq \min_{i = 1,2,\cdots,k} \bar{Y}_{i,r} (\boldsymbol{\mathrm{x}}_j),\quad \mathcal{\hat{U}}_{(j),r}^b \triangleq \frac{\hat{\alpha}_{(\hat{i}_r^*(\boldsymbol{\mathrm{x}}_j),j),r}^2}{\hat{\sigma}^2_{\hat{i}_r^*(\boldsymbol{\mathrm{x}}_j),r}(\boldsymbol{\mathrm{x}}_j)}, \quad \mathcal{\hat{U}}_{(j),r}^{non} \triangleq \sum_{i \ne \hat{i}_r^*(\boldsymbol{\mathrm{x}}_j)} \frac{\hat{\alpha}_{(i,j),r}^2}{\hat{\sigma}_{i,r}^2(\boldsymbol{\mathrm{x}}_j)} ,\quad \mathcal{S}_{(j),r}^b \triangleq \frac{\hat{\sigma}^2_{\hat{i}_r^*(\boldsymbol{\mathrm{x}}_j),r}(\boldsymbol{\mathrm{x}}_j)}{ \hat{n}_{(\hat{i}_r^*(\boldsymbol{\mathrm{x}}_j),j),r} } , \nonumber \\
	& \mathcal{S}_{(i,j),r} \triangleq \hat{\sigma}_{i,r}^2(\boldsymbol{\mathrm{x}}_j) / \hat{n}_{(i,j),r}, \quad \hat{\delta}_{(i,j),r} \triangleq (\bar{Y}_{i,r}(\boldsymbol{\mathrm{x}}_j) - \bar{Y}_{\hat{i}_r^*(\boldsymbol{\mathrm{x}}_j),r} (\boldsymbol{\mathrm{x}}_j))^2,  \nonumber  \\
	&\mathcal{\hat{\tau}}_{(i,j),r} \triangleq \frac{\hat{\delta}_{(i,j),r}}{\mathcal{S}_{(j),r}^b  + \mathcal{S}_{(i,j),r} }, \quad \mathcal{\hat{V}}_{(i,j),r} \triangleq  \frac{\hat{\tau}_{(i,j),r}}{n^{(r)}}, \quad j=1,2,\cdots,m, \quad i=1,2,\cdots,k \text{ and } i \ne \hat{i}_r^*(\boldsymbol{\mathrm{x}}_j),  \nonumber
\end{align}

The proof of Theorem 3 requires Lemmas \ref{leone}-\ref{lem::bdopttreat} and Propositions \ref{propos1} and \ref{propos2} below. The necessary condition we will frequently use for CR$\&$S Algorithm 1 to sample $\hat{i}^*_r(\boldsymbol{\mathrm{x}}_j)$ under context $\x_j$ has three equivalent forms
\begin{align*}
	&\mathcal{\hat{U}}_{(j),r}^b < \mathcal{\hat{U}}_{(j),r}^{non},
	\text{ or } \frac{\hat{n}_{(\hat{i}_r^*(\boldsymbol{\mathrm{x}}_j),j),r}^2}{\hat{\sigma}^2_{\hat{i}_r^*(\boldsymbol{\mathrm{x}}_j),r}(\boldsymbol{\mathrm{x}}_j)}< \sum_{i \ne \hat{i}_r^*(\boldsymbol{\mathrm{x}}_j)} \frac{\hat{n}_{(i,j),r}^2}{\hat{\sigma}_{i,r}^2(\boldsymbol{\mathrm{x}}_j)},
	\text{ or }
	\sum_{i \ne \hat{i}_r^*(\boldsymbol{\mathrm{x}}_j)} \frac{ \hat{n}_{(i,j),r}^2/\hat{\sigma}_{i,r}^2(\boldsymbol{\mathrm{x}}_j) }{\hat{n}_{(\hat{i}_r^*(\boldsymbol{\mathrm{x}}_j),j),r}^2/\hat{\sigma}^2_{\hat{i}_r^*(\boldsymbol{\mathrm{x}}_j),r}(\boldsymbol{\mathrm{x}}_j)} > 1.
\end{align*}
Moreover, $\hat{\Vcal}_{(i_1,j_1),r} \le  \hat{\Vcal}_{(i_2,j_2),r}$ for some $i_2 \ne \hat{i}_r^*(\boldsymbol{\mathrm{x}}_{j_2})$ under context $\x_{j_2}$ is a necessary condition for CR$\&$S Algorithm 1 to sample treatment $i_1 \ne \hat{i}_r^*(\boldsymbol{\mathrm{x}}_{j_1})$ under context $\x_{j_1}$. Note that $\hat{\Vcal}_{(i_1,j_1),r} \le \hat{\Vcal}_{(i_2,j_2),r}$ is equivalent to
\begin{align*}
	\hat{\tau}_{(i_1,j_1),r} \le \hat{\tau}_{(i_2,j_2),r}
	\text{ or } \left(\mathcal{S}_{(j_2),r}^b  + \mathcal{S}_{(i_2,j_2),r} \right) \hat{\delta}_{(i_1,j_1),r} \le \left(\mathcal{S}_{(j_1),r}^b  + \mathcal{S}_{(i_1,j_1),r} \right) \hat{\delta}_{(i_2,j_2),r}.
\end{align*}
\begin{lemma}\label{leone}
	Let $n^{(r)}_{(j)} = \sum_{i=1}^k \hat{n}_{(i,j),r}$, $j=1,2,\cdots,m$. For context $\boldsymbol{\mathrm{x}}_j$, if $ n^{(r)}_{(j)} \to \infty$, we have $\hat{n}_{(i,j),r} \to \infty $ almost surely for all $i = 1,2,\cdots,k$ as $r \to \infty$.
\end{lemma}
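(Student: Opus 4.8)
The plan is to prove the single reduction that drives everything: if context $\boldsymbol{\mathrm{x}}_j$ is sampled infinitely often, i.e.\ $n^{(r)}_{(j)}\to\infty$, then every design under that context receives infinitely many replications, $\hat n_{(i,j),r}\to\infty$ for $i=1,\dots,k$. Granting this, the conclusion is immediate. Since $\hat\tau_{(i,j),r}=\hat\delta^r_{(i,j)}/(\mathcal{S}^b_{(j),r}+\mathcal{S}_{(i,j),r})$ with $\mathcal{S}^b_{(j),r}=\hat\sigma^2_{\hat i^*_r(\boldsymbol{\mathrm{x}}_j)}(\boldsymbol{\mathrm{x}}_j)/\hat n_{(\hat i^*_r(\boldsymbol{\mathrm{x}}_j),j),r}$ and $\mathcal{S}_{(i,j),r}=\hat\sigma^2_i(\boldsymbol{\mathrm{x}}_j)/\hat n_{(i,j),r}$, the i.i.d.\ structure and finite moments guaranteed by Assumptions \ref{ass:2}--\ref{ass:3} let the strong law of large numbers give $\bar Y_i(\boldsymbol{\mathrm{x}}_j)\to y_i(\boldsymbol{\mathrm{x}}_j)$ and $\hat\sigma^2_i(\boldsymbol{\mathrm{x}}_j)\to\sigma^2_i(\boldsymbol{\mathrm{x}}_j)\in(0,\infty)$ almost surely, so both terms in the denominator vanish. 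By uniqueness of the best design (Assumption \ref{ass:1}) the estimate $\hat i^*_r(\boldsymbol{\mathrm{x}}_j)$ equals $i^*(\boldsymbol{\mathrm{x}}_j)$ for all large $r$, so for $i\ne i^*(\boldsymbol{\mathrm{x}}_j)$ the numerator converges to $(y_i(\boldsymbol{\mathrm{x}}_j)-y_{i^*(\boldsymbol{\mathrm{x}}_j)}(\boldsymbol{\mathrm{x}}_j))^2>0$. A bounded-below numerator over a vanishing denominator then yields $\hat\tau_{(i,j),r}\to\infty$.

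The heart of the proof, and the step I expect to be the main obstacle, is therefore the no-starvation claim, which is exactly where the self-balancing design of Step~3 is used. First I would rule out starvation of the slot treated as best: because the factor $(n^{(r)})^2$ cancels, the test $\hat{\mathcal{U}}^b_{(j),r}<\hat{\mathcal{U}}^{non}_{(j),r}$ is equivalent to $\hat n_{(\hat i^*_r,j),r}^2/\hat\sigma^2_{\hat i^*_r}<\sum_{i\ne \hat i^*_r}\hat n_{(i,j),r}^2/\hat\sigma^2_i$. If the best slot stopped growing while $\boldsymbol{\mathrm{x}}_j$ were still selected infinitely often, the right-hand side would diverge and the left stay bounded, so Step~3b would route every later visit of $\boldsymbol{\mathrm{x}}_j$ to the best design, a contradiction; symmetrically, if all non-best slots stopped growing the left-hand side would diverge and force routing to a non-best design. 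This dichotomy shows both the best slot and the non-best slots collectively receive infinitely many replications.

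It then remains to spread the infinitely many non-best replications across each individual non-best design. Here I would use that the non-best design chosen in Step~3a is the global $\argmin$ of $\hat{\mathcal{V}}_{(i,j),r}=\hat\tau_{(i,j),r}/n^{(r)}$, which at a fixed iteration coincides with the $\argmin$ of $\hat\tau_{(i,j),r}$. A design that stopped receiving samples keeps $\mathcal{S}_{(i,j),r}$, and hence $\hat\tau_{(i,j),r}$, bounded, whereas every design that keeps growing has $\hat\tau\to\infty$ by the monotonicity $\partial\hat{\mathcal{V}}_{(i,j)}/\partial\alpha_{i,j}>0$ recorded before the algorithm; the starved design would thus eventually be the minimizer, be selected, and receive replications, contradicting its starvation. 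Combining the two parts gives $\hat n_{(i,j),r}\to\infty$ for every $i$.

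The delicate technical points I expect to manage carefully are (i) the pre-stabilization fluctuation of $\hat i^*_r(\boldsymbol{\mathrm{x}}_j)$, which makes the ``best'' and ``non-best'' labels only asymptotically fixed and so forces the balance argument to be run on the label-independent total $\sum_i\hat\alpha^2_{(i,j),r}/\hat\sigma^2_i$ rather than on any single design, and (ii) the coupling created by taking the $\argmin$ over all contexts at once, so that one must argue the incremental budget cannot be permanently diverted to other contexts while $\boldsymbol{\mathrm{x}}_j$ still satisfies $n^{(r)}_{(j)}\to\infty$. Both are controlled by the strictly positive partial derivatives of $\hat{\mathcal{U}}^b_{(j)}$, $\hat{\mathcal{U}}^{non}_{(j)}$ and $\hat{\mathcal{V}}_{(i,j)}$ already displayed, which guarantee that feeding an under-sampled slot raises precisely the quantity whose smallness triggered its selection, so the allocation is self-correcting.
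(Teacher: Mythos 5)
Your route is genuinely different from the paper's. You reduce the lemma to the stronger no-starvation statement $\hat{n}_{(i,j),r}\to\infty$ for every $i$ and then read off the divergence of $\hat{\tau}_{(i,j)}$ from a vanishing denominator and a numerator with a positive limit. The paper's own proof is much more modest: it introduces $A=\{i:\hat{n}_{(i,j),r}\to\infty\}$, shows only that the \emph{estimated best} design must eventually lie in $A$ (via the same $\hat{\Ucal}^b<\hat{\Ucal}^{non}$ routing argument you use), and then concludes $\hat{\tau}_{(i,j)}\to\infty$ from $\hat{n}_{(\hat{i}^*_r(\x_j),j),r}\to\infty$ alone, deferring the full no-starvation claim to its Lemma \ref{lethree}, which is proved only after Lemma \ref{letwo}. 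Your reorganization front-loads that later lemma's content, restricted to the single context $\x_j$. What it buys is a self-contained argument that genuinely covers every $i$: for a starved $i\in A^c$ the paper's one-line conclusion, which rests only on the best design's count diverging, does not by itself make the second denominator term $\hat{\sigma}^2_i(\x_j)/\hat{n}_{(i,j),r}$ vanish, so your stronger reduction is arguably the cleaner way to get the conclusion as stated. The cost is that you essentially re-prove the later no-starvation lemma inside this one.

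The one step you must still close is in your part (b): being the global $\argmin$ of $\hat{\Vcal}$ only makes the starved design $i_1$ the \emph{nominated} pair $(\hat{i}_*,j^r)$; Step 3b samples $\hat{i}_*$ only when $\hat{\Ucal}^b_{j^r}\geq\hat{\Ucal}^{non}_{j^r}$, and otherwise routes the replication to $\hat{i}^*(\x_{j^r})$. So ``be selected, and receive replications'' is a gap as written: a priori the algorithm could keep nominating $i_1$ forever while sending every sample to the best design. You already hold the tool that rules this out --- if that happened, $\hat{n}_{(\hat{i}^*,j),r}$ would grow while all non-best counts of context $\x_j$ froze, so the scale-free comparison of $\hat{n}^2_{(\hat{i}^*,j),r}/\hat{\sigma}^2_{\hat{i}^*}$ against $\sum_{i\neq\hat{i}^*}\hat{n}^2_{(i,j),r}/\hat{\sigma}^2_i$ eventually flips and Step 3b is forced to sample $i_1$ --- but you invoke it only in part (a); it must be invoked again here, exactly as in the closing argument of the paper's Lemma \ref{lethree}. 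With that insertion, and with the almost-sure stabilization of $\hat{i}^*_r(\x_j)$ handled as you indicate, your argument goes through.
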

\begin{proof}
	For context $\boldsymbol{\mathrm{x}}_j$ and a fixed sample path $\omega$, define $A = \{ i|\hat{n}_{(i,j),r} \to \infty \}$. Since $ n^{(r)}_{(j)} \to \infty$, it is obvious that $A$ is non-empty. Suppose $A^c$ is also non-empty and $i_1 \in A^c$. Denote $K_1$ as the last time such that $(i_1,j)$ is sampled. It means
	$
	K_1 = \sup \{ r | (i^r,j^r) = (i_1,j) \}
	$ where $(i^r,j^r)$ is the treatment-context pair simulated at iteration $r$.
	
	Since $\hat{\sigma}^2_{i,r}(\boldsymbol{\mathrm{x}}_j)$ converges almost surely, we could find an upper bound $b_{vU}$ such that $\hat{\sigma}_{i,r}^2(\boldsymbol{\mathrm{x}}_j) < b_{vU}$ for all $i = 1,2,\cdots, k$. Then, as $ n^{(r)}_{(j)}$ increases, there must exist a finite time $K_2 > K_1$ such that
	\begin{equation}\label{in:contra1}
		\frac{\hat{n}_{(i_1,j),r}^2}{\hat{\sigma}_{i_1,r}^2(\boldsymbol{\mathrm{x}}_j)} < \frac{1}{b_{vU}} \sum_{i \ne i_1(\boldsymbol{\mathrm{x}}_j)}\hat{n}_{(i,j),r}^2 < \sum_{i \ne i_1(\boldsymbol{\mathrm{x}}_j)} \frac{\hat{n}_{(i,j),r}^2}{\hat{\sigma}_{i,r}^2(\boldsymbol{\mathrm{x}}_j)}
	\end{equation}
	holds when $r > K_2$. For all $r > K_2$, we claim $\hat{i}^*_r(\boldsymbol{\mathrm{x}}_j) \ne i_1$. Otherwise, since $n^{(r)}_{(j)} \to \infty$, we would be able to find some iteration $r > K_2$ where some treatment under context $\boldsymbol{\mathrm{x}}_j$ is sampled. Then we will have $(i^r,j^r) = (i_1,j)$ at this iteration $r$ in CR\&S Algorithm 1 because $\hat{\Ucal}_{(j),r}^b<\hat{\Ucal}_{(j),r}^{non}$ by \eqref{in:contra1}, contradicting the definitions of $K_1$ and $K_2$.
	
	The analysis above applies to any $i_1 \in A^c$. Since $A^c$ contains a finite number of treatments, there exists $K_3$ such that $i^r \in A$ and $\hat{i}^*_r(\boldsymbol{\mathrm{x}}_j) \in A$ for all $r > K_3$, where context $\boldsymbol{\mathrm{x}}_j$ is sampled.
	
	Meanwhile, $A$ should contain at least two treatments. Otherwise, if $A$ has only one element $i_0$, then $i^r = \hat{i}^*_r(\boldsymbol{\mathrm{x}}_j) = i_0$ for all $r > K_3$. It means $r^2\mathcal{\hat{U}}_{(j),r}^b =  \hat{n}_{(\hat{i}_r^*(\boldsymbol{\mathrm{x}}_j),j),r}^2/\hat{\sigma}^2_{\hat{i}_r^*(\boldsymbol{\mathrm{x}}_j),r}(\boldsymbol{\mathrm{x}}_j) > \hat{n}_{(\hat{i}_r^*(\boldsymbol{\mathrm{x}}_j),j),r}^2/ b_{vU}$  will go to infinity and $r^2\mathcal{\hat{U}}_{(j),r}^{non} = \sum_{i \ne \hat{i}_r^*(\boldsymbol{\mathrm{x}}_j)} \hat{n}_{(i,j),r}^2/\hat{\sigma}_{i,r}^2(\boldsymbol{\mathrm{x}}_j)$ will remain fixed when $r >K_3$ and $n^{(r)}_{(j)} \to \infty$. This leads to contradiction because we have to sample $(i^r,j^r) = (i_1,j)$ due to $\hat{\Ucal}_{(j),r}^b>\hat{\Ucal}_{(j),r}^{non}$, where $i_1 \in A^c$, at some iteration $r > K_3$ by CR$\&$S Algorithm 1.
	
	Then there exists $K_4 > K_3$ such that $\hat{i}^*_r(\boldsymbol{\mathrm{x}}_j)$ remains the same for all $r >K_4$ because $\bar{Y}_{i_0,r}(\boldsymbol{\mathrm{x}}_j)$ converges to $y_{i_0}(\x_j)$ for $i_0 \in A$ and $\bar{Y}_{i_1,r}(\boldsymbol{\mathrm{x}}_j)$ remains fixed for $i_1 \in A^c$. Then, when $r > K_4$, we have for $i_0 \in A$, $i_0 \ne \hat{i}^*_r(\boldsymbol{\mathrm{x}}_j)$ and $i_1 \in A^c$
	\begin{align*}
		&\mathcal{\hat{\tau}}_{(i_0,j),r} - \mathcal{\hat{\tau}}_{(i_1,j),r}
		=\frac{\hat{\delta}_{(i_0,j),r}}{\mathcal{S}_{(j),r}^b + \mathcal{S}_{(i_0,j),r}} - \frac{\hat{\delta}_{(i_1,j),r}}{\mathcal{S}_{(j),r}^b + \mathcal{S}_{(i_1,j),r}}    \\
		=&\frac{\big( \hat{\delta}_{(i_0,j),r} - \hat{\delta}_{(i_1,j),r} \big) \hat{\sigma}^2_{\hat{i}_r^*(\boldsymbol{\mathrm{x}}_j),r}(\boldsymbol{\mathrm{x}}_j) / \hat{n}_{(\hat{i}_r^*(\boldsymbol{\mathrm{x}}_j),j),r} + \hat{\delta}_{(i_0,j),r} \hat{\sigma}_{i_1,r}^2(\boldsymbol{\mathrm{x}}_j) / \hat{n}_{(i_1,j),r} - \hat{\delta}_{(i_1,j),r}\hat{\sigma}_{i_0,r}^2(\boldsymbol{\mathrm{x}}_j) / \hat{n}_{(i_0,j),r}     }{\big( \mathcal{S}_{(j),r}^b + \mathcal{S}_{(i_0,j),r} \big)\big( \mathcal{S}_{(j),r}^b + \mathcal{S}_{(i_1,j),r} \big)}  \\
		>& \frac{b_{v_1} / \hat{n}_{(\hat{i}_r^*(\boldsymbol{\mathrm{x}}_j),j),r} + b_{v_2} / \hat{n}_{(i_1,j),r} - b_{v_3} / \hat{n}_{(i_0,j),r}}{\big( \mathcal{S}_{(j),r}^b + \mathcal{S}_{(i_0,j),r} \big)\big( \mathcal{S}_{(j),r}^b + \mathcal{S}_{(i_1,j),r} \big)},
	\end{align*}
	where $b_{v_1}$ and $b_{v_2}$ are lower bounds of $\big( \hat{\delta}_{(i_0,j),r} - \hat{\delta}_{(i_1,j),r} \big) \hat{\sigma}^2_{\hat{i}_r^*(\boldsymbol{\mathrm{x}}_j),r}(\boldsymbol{\mathrm{x}}_j)$ and $\hat{\delta}_{(i_0,j),r} \hat{\sigma}_{i_1,r}^2(\boldsymbol{\mathrm{x}}_j)$ and $b_{v_3}$ is an upper bound of $\hat{\delta}_{(i_1,j),r} \hat{\sigma}_{i_0,r}^2(\boldsymbol{\mathrm{x}}_j)$. Moreover, $b_{v_2} > 0$ and $b_{v_3}>0$.
	
	Since $i_0$ and $\hat{i}_r^*(\boldsymbol{\mathrm{x}}_j)$ are in $A$, we know that both $\hat{n}_{(\hat{i}_r^*(\boldsymbol{\mathrm{x}}_j),j),r}$ and $\hat{n}_{(i_0,j),r}$ go to $\infty$ as $r$ increases. Then, there exists a $K_5 > K_4$ such that, when $r > K_5$,
	\begin{displaymath}
		\begin{aligned}
			\frac{|b_{v_1}|}{\hat{n}_{(\hat{i}_r^*(\boldsymbol{\mathrm{x}}_j),j),r}} < \frac{b_{v_2}}{2\hat{n}_{(i_1,j),K_4}} = \frac{b_{v_2}}{2\hat{n}_{(i_1,j),r}}, \quad \frac{b_{v_3}}{\hat{n}_{(i_0,j),r}} < \frac{b_{v_2}}{2\hat{n}_{(i_1,j),K_4}} = \frac{b_{v_2}}{2\hat{n}_{(i_1,j),r}}.
		\end{aligned}
	\end{displaymath}
	
	It means $\mathcal{\hat{\tau}}_{(i_0,j),r} - \mathcal{\hat{\tau}}_{(i_1,j),r} > 0$ when $r > K_5$. Since the number of treatments is finite, we could find $K_6 > K_5$ such that when $r > K_6$,
	$
	\max_{i_1 \in A^c} \mathcal{\hat{\tau}}_{(i_1,j),r} < \min_{i_0 \in A} \mathcal{\hat{\tau}}_{(i_0,j),r}.
	$
	
	Since $\hat{n}_{(j)}^{(r)} \to \infty$, some treatments under context $\boldsymbol{\mathrm{x}}_j$ will be sampled infinitely. However, based on the criteria of CR$\&$S algorithm 1, when $r > K_6$, context $\boldsymbol{\mathrm{x}}_j$ will be sampled only through $\mathcal{\hat{\tau}}_{(i_1,j),r}$, $i_1 \in A^c$. It means $i_0 \ne \hat{i}_r^*(\boldsymbol{\mathrm{x}}_j)$ where $i_0 \in A$ will not be sampled when $r > K_6$. This leads to contradiction because $\hat{n}_{(i_0,j),r} \to \infty$ by the definition of $A$. So $A^c = \emptyset$ and we have proved the lemma.
\end{proof}

\begin{lemma}\label{letwo}
	For all $\boldsymbol{\mathrm{x}}_j, j=1,\cdots,m$, we have $\lim_{r \to \infty}  n^{(r)}_{(j)} = \infty$ almost surely.
\end{lemma}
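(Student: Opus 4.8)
The plan is to argue by contradiction, combining Lemma~\ref{leone} with the structural fact that the selection rule always feeds exactly the context whose comparison statistic $\hat{\mathcal{V}}_{(i,j),r}$ is globally smallest. First I would record two elementary observations. Since $\Delta n = 1$, each iteration adds one replication, so $n^{(r)} \to \infty$; and because $n^{(r)} = \sum_{j=1}^m n^{(r)}_{(j)}$ is a sum of only $m$ terms, at least one context must be sampled infinitely often. The entire task is therefore to exclude the existence of a \emph{starved} context, one that is selected at only finitely many iterations.

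Suppose, for contradiction, that some context $\boldsymbol{\mathrm{x}}_{j_0}$ is selected only finitely often, and let $K$ be the last iteration at which it is chosen. For all $r > K$ the counts $\hat{n}_{(i,j_0),r}$, the sample means $\bar{Y}_i(\boldsymbol{\mathrm{x}}_{j_0})$, the variance estimators $\hat{\sigma}^2_i(\boldsymbol{\mathrm{x}}_{j_0})$, and hence the estimated best design $\hat{i}_r^*(\boldsymbol{\mathrm{x}}_{j_0})$ are all frozen. Consequently $\hat{\delta}_{(i,j_0)}$, $\mathcal{S}_{(j_0),r}^b$ and $\mathcal{S}_{(i,j_0),r}$ are constant, so $\hat{\tau}_{(i,j_0),r}$ stabilizes at a finite positive value $c_i$ for each $i \ne \hat{i}_r^*(\boldsymbol{\mathrm{x}}_{j_0})$; positivity uses that, for $i \ne \hat{i}_r^*(\boldsymbol{\mathrm{x}}_{j_0})$, the frozen sample means are almost surely distinct, keeping $\hat{\delta}_{(i,j_0)} > 0$ and the denominator bounded and nonzero. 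Writing $c_0 = \min_{i} c_i > 0$, the definition $\hat{\mathcal{V}}_{(i,j_0),r} = \hat{\tau}_{(i,j_0),r}/n^{(r)}$ gives $\min_i \hat{\mathcal{V}}_{(i,j_0),r} = c_0/n^{(r)} \to 0$ at the exact rate $1/n^{(r)}$.

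Now I would fix a context $\boldsymbol{\mathrm{x}}_{j_1}$ that is sampled infinitely often, and restrict attention to the subsequence of iterations at which $j_1$ is selected. At each such iteration the argmin property of the algorithm says $\min_i \hat{\mathcal{V}}_{(i,j_1),r} = \min_{j,i}\hat{\mathcal{V}}_{(i,j),r} \le \min_i \hat{\mathcal{V}}_{(i,j_0),r} = c_0/n^{(r)}$, and multiplying through by $n^{(r)}$ yields $\min_i \hat{\tau}_{(i,j_1),r} \le c_0$ along this subsequence. But $j_1$ being sampled infinitely often means $n^{(r)}_{(j_1)} \to \infty$, so Lemma~\ref{leone} forces $\hat{\tau}_{(i,j_1),r} \to \infty$ for \emph{every} $i$, hence $\min_i \hat{\tau}_{(i,j_1),r} \to \infty$. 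This contradicts the uniform bound $c_0$, so no starved context can exist, and therefore $n^{(r)}_{(j)} \to \infty$ almost surely for every $j$.

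The step requiring the most care is the second paragraph: rigorously pinning $\hat{\tau}_{(i,j_0),r}$ at a finite positive constant once $j_0$ freezes. This needs both that the numerator $\hat{\delta}_{(i,j_0)}$ stays strictly positive (via almost-sure distinctness of the frozen sample means) and that the denominator $\mathcal{S}_{(j_0),r}^b + \mathcal{S}_{(i,j_0),r}$ stays bounded and bounded away from zero. That control is precisely what delivers the $1/n^{(r)}$ decay of $\hat{\mathcal{V}}_{(i,j_0),r}$, and it is the contrast between this fast decay and the slower decay guaranteed for a developed context by Lemma~\ref{leone} (through $\hat{\tau}\to\infty$) that drives the contradiction.
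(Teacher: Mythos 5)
Your proposal is correct and follows essentially the same route as the paper's proof: both arguments freeze the statistics of a hypothetically starved context so that its $\hat{\tau}_{(i,j_0)}$ values stay bounded, invoke Lemma \ref{leone} to send $\hat{\tau}_{(i,j_1)}\to\infty$ for a context sampled infinitely often, and derive a contradiction with the argmin selection rule (your passage through $\hat{\mathcal{V}}=\hat{\tau}/n^{(r)}$ and multiplication by $n^{(r)}$ is just an explicit rendering of the paper's direct comparison of $\hat{\tau}$ values). No gaps; your extra care about positivity of the frozen $\hat{\delta}_{(i,j_0)}$ is harmless but not needed, since only an upper bound on the frozen $\hat{\tau}$'s is required.
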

\begin{proof}
	Fix a sample path $\omega$ and define $B = \{ j | \lim_{r \to \infty} n^{(r)}_{(j)} = \infty \}$. Obviously, $B$ is non-empty. If $B^c$ is also non-empty, for every $j_1 \in B^c$, let
	$
	L_1 = \sup \{ r | j^r = j_1 \}.
	$
	Then context $\boldsymbol{\mathrm{x}}_{j_1}$ will not be sampled when $r > L_1$.
	Thus, $\mathcal{\hat{\tau}}_{(i,j_1),r}$ stays the same for $r > L_1, i=1,\cdots,k$.
	
	Since $B^c$ has a finite number of contexts, we could find $L_2 \ge L_1$ such that for any $j \in B^c$, context $\boldsymbol{\mathrm{x}}_j$ will not be sampled when $r > L_2$. Then, there exists a positive constant $b^u$ such that
	$
	\max_{j \in B^c,i=1,\cdots,k}\mathcal{\hat{\tau}}_{(i,j),r} < b^u
	$ because $\bar{Y}_{i,r} (\boldsymbol{\mathrm{x}}_j)$, $\hat{\sigma}_{i,r}^2(\boldsymbol{\mathrm{x}}_j)$, and $\hat{n}_{(i,j),r}$ remain fixed, $j \in B^c,i=1,\cdots,k$.
	
	However, by Lemma \ref{leone}, $\mathcal{\hat{\tau}}_{(i,j),r} \to \infty $ because $\hat{n}_{(i,j),r} \to \infty $ as $r \to \infty$ if $j \in B$ and $i = 1, \cdots,k$. From here, it is straightforward to see that we could find $L_3 > L_2$ such that
	$
	\min_{j \in A,i=1,\cdots,k}\mathcal{\hat{\tau}}_{(i,j),r} > \max_{j \in A^c,i=1,\cdots,k}\mathcal{\hat{\tau}}_{(i,j),r},
	$
	for $r > L_3$. That is, we have to simulate treatments under contexts $\boldsymbol{\mathrm{x}}_j$ where $j \in B^c$ in CR\&S Algorithm 1. This contradicts the definition of $L_2$. Therefore, $B^c=\emptyset$.
\end{proof}

\begin{lemma}\label{lethree}
	For all $\boldsymbol{\mathrm{x}}_j, \ j=1,\cdots,m$ and $i=1,\cdots,k$, we have $ \lim_{r \to \infty} \hat{n}_{(i,j),r} = \infty$ almost surely.
\end{lemma}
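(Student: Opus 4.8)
The plan is to derive the claim from the two preceding lemmas by a short contradiction argument. First I would fix a sample path $\omega$ lying in the probability-one event on which the conclusions of Lemmas \ref{leone} and \ref{letwo} hold, and on which the strong law gives $\hat{\sigma}^2_i(\x_j)\to\sigma^2_i(\x_j)$ and $\bar{Y}_i(\x_j)\to y_i(\x_j)$ for every pair $(i,j)$ that is sampled infinitely often (recall the samples at a fixed pair are i.i.d.\ by Assumption \ref{ass:2}). By Lemma \ref{letwo}, $n^{(r)}_{(j)}\to\infty$ for every $j$, so Lemma \ref{leone} applies to each context and yields $\hat{\tau}_{(i,j),r}\to\infty$ for all $i=1,\dots,k$ and $j=1,\dots,m$. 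This is the only consequence of the earlier lemmas I will need.

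Next, suppose for contradiction that the claim fails, i.e.\ there is a pair $(i_0,j_0)$ for which $\hat{n}_{(i_0,j_0),r}$ does not tend to infinity. Since the algorithm increments exactly one count by one at each iteration, the sequence $\{\hat{n}_{(i_0,j_0),r}\}_r$ is nondecreasing and integer-valued, so failing to diverge forces it to be eventually constant, say equal to $N_0<\infty$ for all $r\ge K$. Consequently $\mathcal{S}_{(i_0,j_0),r}=\hat{\sigma}^2_{i_0}(\x_{j_0})/\hat{n}_{(i_0,j_0),r}$ converges to the strictly positive constant $\sigma^2_{i_0}(\x_{j_0})/N_0$, and hence the denominator $\mathcal{S}^b_{(j_0),r}+\mathcal{S}_{(i_0,j_0),r}$ of $\hat{\tau}_{(i_0,j_0),r}$ stays bounded away from zero.

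It then remains to bound the numerator $\hat{\delta}^r_{(i_0,j_0)}=(\bar{Y}_{i_0}(\x_{j_0})-\bar{Y}_{\hat{i}^*_r(\x_{j_0})}(\x_{j_0}))^2$. Here $\bar{Y}_{i_0}(\x_{j_0})$ is eventually frozen at its value after $N_0$ samples, while $\bar{Y}_{\hat{i}^*_r(\x_{j_0})}(\x_{j_0})=\min_i\bar{Y}_i(\x_{j_0})$ is a minimum over the finite design set in which each term is either convergent (for designs sampled infinitely often) or eventually constant (for the rest); hence $\bar{Y}_{\hat{i}^*_r(\x_{j_0})}(\x_{j_0})$ is bounded, and so is $\hat{\delta}^r_{(i_0,j_0)}$. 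Combining the two bounds shows $\hat{\tau}_{(i_0,j_0),r}$ is bounded, contradicting $\hat{\tau}_{(i_0,j_0),r}\to\infty$. As $(i_0,j_0)$ was arbitrary, this establishes $\hat{n}_{(i,j),r}\to\infty$ for all $i,j$ almost surely. In spirit this parallels the set-based contradiction used in Lemma \ref{letwo}, but invoking Lemma \ref{leone} makes it considerably shorter.

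I expect the only delicate point to be the boundedness of the numerator: one must rule out the possibility that the estimated best mean $\bar{Y}_{\hat{i}^*_r(\x_{j_0})}$ drifts off, and this is exactly where the monotonicity-plus-finiteness observation does the work, namely that every $\bar{Y}_i(\x_{j_0})$ either converges or freezes and the minimum of finitely many bounded sequences is bounded. Everything else is routine bookkeeping built on top of Lemmas \ref{leone}--\ref{letwo}.
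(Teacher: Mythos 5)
Your argument has a genuine circularity problem. The contradiction you draw is between two facts: (i) Lemma \ref{leone} applied to every $i$, which would give $\hat{\tau}_{(i_0,j_0),r}\to\infty$ for the allegedly starved pair, and (ii) your (correct) computation that a starved pair has bounded numerator and a denominator bounded away from zero, so that $\hat{\tau}_{(i_0,j_0),r}$ stays bounded. But look at what the proof of Lemma \ref{leone} actually establishes: its final step concludes $\hat{\tau}_{(i,j)}\to\infty$ ``because $\hat{n}_{(\hat{i}^*_r(\x_j),j),r}\to\infty$'', which only sends the first denominator term $\mathcal{S}^b_{(j),r}$ to zero; the second term $\hat{\sigma}^2_i(\x_j)/\hat{n}_{(i,j),r}$ vanishes only when $\hat{n}_{(i,j),r}\to\infty$, i.e.\ only for designs in the set $A$ of non-starved designs. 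So Lemma \ref{leone} is actually proved only for $i\in A$, and its ``for all $i$'' form is --- by exactly the boundedness computation you yourself perform --- logically equivalent to the assertion that $A^c=\emptyset$, which is the statement of Lemma \ref{lethree} itself. You are therefore assuming the conclusion. A telltale sign is that your route would reduce the paper's page-long proof of this lemma to a two-line corollary of Lemmas \ref{leone} and \ref{letwo}.

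The missing content is the interaction with the algorithm's two-stage selection rule, which is where all of the work in the paper's proof happens. One must show, first, that if $i_1$ is starved while some $i_0$ in the same context is not, then eventually $\hat{\tau}_{(i_1,j)}<\hat{\tau}_{(i_0,j)}$ (your boundedness observation is half of this; the paper carries out the explicit comparison of the two ratios), so that whenever context $\x_j$ is considered the flagged design $\hat{i}_*$ lies in the starved set $D^c$; and second, that the check $\hat{\mathcal{U}}^b_{j}<\hat{\mathcal{U}}^{non}_{j}$ cannot forever divert the incremental sample to $\hat{i}^*_r(\x_j)$ instead of $\hat{i}_*$, because repeatedly sampling only the estimated best design drives $\hat{n}^2_{(\hat{i}^*_r(\x_j),j),r}/\hat{\sigma}^2_{\hat{i}^*_r(\x_j)}(\x_j)$ above the frozen sum $\sum_{i\neq \hat{i}^*_r(\x_j)}\hat{n}^2_{(i,j),r}/\hat{\sigma}^2_{i}(\x_j)$, after which the algorithm is forced to sample the starved design --- the desired contradiction. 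Your proposal never touches either step, and without them the contradiction cannot be closed.
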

\begin{proof}
	This lemma is obvious by Lemmas \ref{leone} and \ref{letwo}.
\end{proof}

\begin{remark}
	Since $ \hat{n}_{(i,j),r} \to \infty$ as $r \to \infty$ for all contexts $\boldsymbol{\mathrm{x}}_j, j=1,\cdots,m$ and treatments $i=1,\cdots,k$, we will always have $\hat{i}^*_r(\boldsymbol{\mathrm{x}}_j) = i^*(\boldsymbol{\mathrm{x}}_j)$, i.e., the estimated best treatment is the true best under context $\boldsymbol{\mathrm{x}}_j$ on almost every sample path when $r$ is large enough. So, without loss of generality, when we say iteration $r$ is large enough in the rest of the proof, we mean that the estimate $\bar{Y}_i(\boldsymbol{\mathrm{x}}_j)$ is close enough to its real value $y_i(\boldsymbol{\mathrm{x}}_j)$ such that $\hat{i}^*_r(\boldsymbol{\mathrm{x}}_j) = i^*(\boldsymbol{\mathrm{x}}_j)$ for all $j=1,\cdots,m$ and $i=1,\cdots,k$.
	
	Moreover, since $ \hat{n}_{(i,j),r} \to \infty$ as $r \to \infty$, we can find constants $b_{L},b_{U},b_{vL}$, and $b_{vU}$ such that $0<b_{L} < \hat{\delta}_{(i,j),r} < b_{U}$,  $0 < b_{vL}<\hat{\sigma}^2_{i,r} (\boldsymbol{\mathrm{x}}_j) < b_{vU}$, $\forall j=1,\dots,m$, $i=1,\dots,k$. In the subsequent proof, we will use the notations $b_{L},b_{U},b_{vL}$, and $b_{vU}$ without repeated explanation.
\end{remark}

\begin{lemma}\label{lefi}
	For any context $\boldsymbol{\mathrm{x}}_j$ and any two treatments $i_1, i_2 \ne i^*(\boldsymbol{\mathrm{x}}_j)$, $\mathop{ \lim \inf }_{r \to \infty} \frac{\hat{\alpha}_{(i_1,j),r}}{\hat{\alpha}_{(i_2,j),r}} > 0$ almost surely.
\end{lemma}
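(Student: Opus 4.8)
The plan is to argue by contradiction and reduce everything to the replication counts. Since $\hat\alpha_{(i,j)}=\hat n_{(i,j),r}/n^{(r)}$, the ratio $\hat\alpha_{(i_1,j)}/\hat\alpha_{(i_2,j)}$ is exactly $\hat n_{(i_1,j),r}/\hat n_{(i_2,j),r}$, so it suffices to bound this ratio of counts away from $0$. I would suppose instead that $\liminf_{r\to\infty}\hat n_{(i_1,j),r}/\hat n_{(i_2,j),r}=0$ and fix a subsequence $r_l\to\infty$ along which the ratio vanishes, so that $i_1$ is ``starved'' relative to $i_2$. Because $\hat n_{(i_2,j),r}\to\infty$ by Lemma \ref{lethree}, design $i_2$ is sampled in context $\x_j$ infinitely often; I would let $s_l\le r_l$ be the last iteration up to $r_l$ at which $(i_2,j)$ is sampled. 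Since $i_2$ is then frozen on $(s_l,r_l]$ while $\hat n_{(i_1,j)}$ only grows, the ratio at $s_l$ still tends to $0$. Throughout I would invoke the Remark after Lemma \ref{lethree} to assume $l$ large enough that $\hat i^*_{s_l}(\x_j)=i^*(\x_j)$ and the estimators $\hat\delta_{(i,j)}$, $\hat\sigma_i^2(\x_j)$ sit near their positive limits $\delta_{(i,j)}=(y_i(\x_j)-y_{i^*(\x_j)}(\x_j))^2$ and $\sigma_i^2(\x_j)$, hence are bounded and bounded away from $0$.

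Next I would read off two facts from the selection of the non-best design $i_2$ at $s_l$. First, $i_2$ attains $\min_{i\neq i^*(\x_j)}\hat\tau_{(i,j)}$, so $\hat\tau_{(i_2,j)}\le\hat\tau_{(i_1,j)}$, i.e.
\[
\hat\delta_{(i_2,j)}\big(\mathcal{S}^b_{(j),s_l}+\mathcal{S}_{(i_1,j),s_l}\big)\le\hat\delta_{(i_1,j)}\big(\mathcal{S}^b_{(j),s_l}+\mathcal{S}_{(i_2,j),s_l}\big).
\]
Second, the non-best branch is entered only when $\hat\Ucal_j^{b}\ge\hat\Ucal_j^{non}$; cancelling the common factor $(n^{(s_l)})^{-2}$ this reads $\hat n_{(i^*,j),s_l}^2/\hat\sigma_{i^*}^2(\x_j)\ge\hat n_{(i_2,j),s_l}^2/\hat\sigma_{i_2}^2(\x_j)$, giving $\hat n_{(i^*,j),s_l}\ge c\,\hat n_{(i_2,j),s_l}$ for a constant $c>0$. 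The starvation means $\mathcal{S}_{(i_2,j),s_l}=o(\mathcal{S}_{(i_1,j),s_l})$, so in the displayed inequality $\hat\delta_{(i_1,j)}\mathcal{S}_{(i_2,j),s_l}$ is negligible beside $\hat\delta_{(i_2,j)}\mathcal{S}_{(i_1,j),s_l}$, and rearranging yields $(\delta_{(i_2,j)}+o(1))\mathcal{S}_{(i_1,j),s_l}\le(\delta_{(i_1,j)}-\delta_{(i_2,j)})\mathcal{S}^b_{(j),s_l}$. As the left side is eventually positive, this already contradicts $\delta_{(i_1,j)}<\delta_{(i_2,j)}$, and otherwise forces $\mathcal{S}_{(i_1,j),s_l}\le C\,\mathcal{S}^b_{(j),s_l}$, that is $\hat n_{(i^*,j),s_l}\le C'\,\hat n_{(i_1,j),s_l}$.

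Putting the two estimates together, $c\,\hat n_{(i_2,j),s_l}\le\hat n_{(i^*,j),s_l}\le C'\,\hat n_{(i_1,j),s_l}$, so $\hat n_{(i_1,j),s_l}/\hat n_{(i_2,j),s_l}\ge c/C'>0$, contradicting that this ratio tends to $0$; by symmetry the same runs with the roles of $i_1$ and $i_2$ exchanged, which establishes the lemma. I expect the main obstacle to be precisely the coupling through the common best-design term $\mathcal{S}^b_{(j),r}=\hat\sigma_{i^*}^2(\x_j)/\hat n_{(i^*,j),r}$ shared by every $\hat\tau_{(i,j)}$: a starved $i_1$ is not automatically preferred if the best design is itself under-sampled, so the pure ``balance among non-best designs'' intuition is not enough. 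The device that resolves this is the second fact above, where the branch condition $\hat\Ucal_j^{b}\ge\hat\Ucal_j^{non}$ active at each non-best sampling certifies that $\hat n_{(i^*,j)}$ dominates the largest non-best count up to a constant, converting the estimate ``$\mathcal{S}^b$ controls $\mathcal{S}_{(i_1,j)}$'' into the required comparison between $\hat n_{(i_1,j)}$ and $\hat n_{(i_2,j)}$.
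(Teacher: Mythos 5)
Your proof is correct and takes essentially the same route as the paper's: a contradiction derived at an iteration where $i_2$ is sampled while the ratio is already very small, exploiting exactly the two algorithmic conditions the paper uses, namely the argmin property of $\hat{\mathcal{V}}_{(i,j)}$ and the branch condition $\hat{\Ucal}_{j}^{b}\geq \hat{\Ucal}_{j}^{non}$. The only difference is organizational: the paper fixes an explicit threshold $c$ on the ratio and packs everything into a single display showing $\hat{\mathcal{V}}_{(i_1,j)}<\hat{\mathcal{V}}_{(i_2,j)}$, whereas you chain two count comparisons through $\hat{n}_{(i^*(\x_j),j)}$ to contradict the vanishing ratio directly; both arguments are sound.
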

\begin{proof}	
	We prove this lemma by contradiction when $r$ is large enough. Suppose that there exist $i_1, i_2 > 1$ and
	$
	\mathop{ \lim \inf }_{r \to \infty} \hat{\alpha}_{(i_1,j),r}/\hat{\alpha}_{(i_2,j),r} = 0.
	$
	Let $c = \frac{ b_{L} b_{vL} } {(b_{U} - b_{L}) b_{vU} + b_{U} b_{vU} +1 }$. Then, we could find a large enough iteration $r_1$ such that
	\begin{align}\label{defc}
		&\frac{\hat{\alpha}_{(i_1,j),r_1}}{\hat{\alpha}_{(i_2,j),r_1}} <  c < \frac{ b_{L} \hat{\sigma}^2_{i_1,r_1} (\boldsymbol{\mathrm{x}}_j) }{(b_{U} - b_{L}) \hat{\sigma}_{i^{*} (\x_{j}),r_1} (\boldsymbol{\mathrm{x}}_j) \hat{\sigma}_{i_2,r_1} (\boldsymbol{\mathrm{x}}_j) + b_{U} \hat{\sigma}^2_{i_2,r_1} (\boldsymbol{\mathrm{x}}_j) +1 },
	\end{align}
	and we will sample treatment $i_2$ under $\boldsymbol{\mathrm{x}}_j$ to make $\frac{\hat{\alpha}_{(i_1,j),r_1+1}}{\hat{\alpha}_{(i_2,j),r_1+1}} < \frac{\hat{\alpha}_{(i_1,j),r_1}}{\hat{\alpha}_{(i_2,j),r_1}}$. However, at iteration $r_1$,
	\begin{align*}
		\hat{\mathcal{V}}_{(i_1,j),r_1} - \hat{\mathcal{V}}_{(i_2,j),r_1}  &= \frac{\hat{\delta}_{(i_1,j),r_1}}{\big( \mathcal{S}_{(i^{*} (\x_{j}),j),r_1} + \mathcal{S}_{(i_1,j),r_1} \big) n^{(r_1)} } - \frac{\hat{\delta}_{(i_2,j),r_1}}{\big( \mathcal{S}_{(i^{*} (\x_{j}),j),r_1} + \mathcal{S}_{(i_2,j),r_1} \big) n^{(r_1)} }  \\
		&<  \frac{b_{U}}{\big( \mathcal{S}_{(i^{*} (\x_{j}),j),r_1} + \mathcal{S}_{(i_1,j),r_1} \big) n^{(r_1)} } - \frac{b_{L}}{\big( \mathcal{S}_{(i^{*} (\x_{j}),j),r_1} + \mathcal{S}_{(i_2,j),r_1} \big) n^{(r_1)} }  \\
		&= \frac{ (b_{U}-b_{L}) \frac{\hat{\sigma}^2_{i^{*} (\x_{j}),r_1} (\boldsymbol{\mathrm{x}}_j)}{\hat{\alpha}_{(i^{*} (\x_{j}),j),r_1}} + b_{U} \frac{\hat{\sigma}^2_{i_2,r_1} (\boldsymbol{\mathrm{x}}_j)}{\hat{\alpha}_{(i_2,j),r_1}} - b_{L} \frac{\hat{\sigma}^2_{i_1,r_1} (\boldsymbol{\mathrm{x}}_j)}{\hat{\alpha}_{(i_1,j),r_1} } } {\big( \mathcal{S}_{(i^{*} (\x_{j}),j),r_1} + \mathcal{S}_{(i_1,j),r_1} \big) \big( \mathcal{S}_{(i^{*} (\x_{j}),j),r_1} + \mathcal{S}_{(i_2,j),r_1} \big)(n^{(r_1)})^2}.
	\end{align*}
	For the numerator, we have
	\begin{eqnarray}
		&&(b_{U}-b_{L}) \frac{\hat{\sigma}^2_{i^{*} (\x_{j}),r_1} (\boldsymbol{\mathrm{x}}_j)}{\hat{\alpha}_{(i^{*} (\x_{j}),j),r_1}} + b_{U} \frac{\hat{\sigma}^2_{i_2,r_1} (\boldsymbol{\mathrm{x}}_j)}{\hat{\alpha}_{(i_2,j),r_1}} - b_{L}\frac{\hat{\sigma}^2_{i_1,r_1} (\boldsymbol{\mathrm{x}}_j)}{\hat{\alpha}_{(i_1,j),r_1} }  \nonumber \\
		&\le& (b_{U}-b_{L}) \frac{\hat{\sigma}_{i^{*} (\x_{j}),r_1} (\boldsymbol{\mathrm{x}}_j)\hat{\sigma}_{i_2,r_1} (\boldsymbol{\mathrm{x}}_j) }{ \hat{\alpha}_{(i_2,j),r_1} } + b_{U} \frac{\hat{\sigma}^2_{i_2,r_1} (\boldsymbol{\mathrm{x}}_j)}{\hat{\alpha}_{(i_2,j),r_1}} - b_{L} \frac{\hat{\sigma}^2_{i_1,r_1} (\boldsymbol{\mathrm{x}}_j)}{\hat{\alpha}_{(i_1,j),r_1} }   \label{eqb41} \\
		&<& (b_{U}-b_{L}) \frac{\hat{\sigma}_{i^{*} (\x_{j}),r_1} (\boldsymbol{\mathrm{x}}_j)\hat{\sigma}_{i_2,r_1} (\boldsymbol{\mathrm{x}}_j) }{ \hat{\alpha}_{(i_2,j),r_1} } + b_{U} \frac{\hat{\sigma}^2_{i_2,r_1} (\boldsymbol{\mathrm{x}}_j)}{\hat{\alpha}_{(i_2,j),r_1}} - b_{L} \frac{\hat{\sigma}^2_{i_1,r_1} (\boldsymbol{\mathrm{x}}_j)}{c \hat{\alpha}_{(i_2,j),r_1} }  < 0. \label{eqb43}
	\end{eqnarray}
	(\ref{eqb41}) holds because a non-best treatment is sampled by the definition of $r_1$ which suggests
	$$ \hat{\alpha}_{(i^{*} (\x_{j}),j),r_1}^2/\hat{\sigma}^2_{i^{*} (\x_{j}),r_1}(\boldsymbol{\mathrm{x}}_j) \ge \sum_{i \ne i^{*} (\x_{j})} \hat{\alpha}_{(i,j),r_1}^2/\hat{\sigma}_{i,r_1}^2(\boldsymbol{\mathrm{x}}_j) \ge \hat{\alpha}_{(i_2,j),r_1}^2/\hat{\sigma}_{i_2,r_1}^2(\boldsymbol{\mathrm{x}}_j).$$
	(\ref{eqb43}) holds because of (\ref{defc}) and the definition of $c$. Thus, $\hat{\mathcal{V}}_{(i_1,j),r_1} - \hat{\mathcal{V}}_{(i_2,j),r_1} < 0$ and treatment $i_2$ under $\x_{j}$ will not be sampled by our algorithm, contradicting $r_1$'s definition.
\end{proof}

\begin{remark}
	By symmetry, we have $\mathop{ \lim \sup }_{r \to \infty} \frac{\hat{\alpha}_{(i_1,j),r}}{\hat{\alpha}_{(i_2,j),r}} < \infty$ almost surely, for $i_1,i_2 \ne i^*(\x_j)$.
\end{remark}

\begin{lemma}\label{lefi2}
	For any context $\boldsymbol{\mathrm{x}}_j$, $0 < \mathop{ \lim \inf }_{r \to \infty} \hat{\alpha}_{(i^*(\boldsymbol{\mathrm{x}}_j),j),r}/\hat{\alpha}_{(i,j),r} < \mathop{ \lim \sup }_{r \to \infty} \hat{\alpha}_{(i^*(\boldsymbol{\mathrm{x}}_j),j),r}/\hat{\alpha}_{(i,j),r}$ $< \infty$ almost surely, for $i \ne i^*(\boldsymbol{\mathrm{x}}_j)$.
\end{lemma}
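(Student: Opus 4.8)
The plan is to establish both bounds by the same threshold-plus-vanishing-increment mechanism used in Lemma \ref{lefi}, now comparing the best design against a non-best design. Throughout I take $r$ large enough that $\hat{i}_r^*(\boldsymbol{\mathrm{x}}_j)=i^*(\boldsymbol{\mathrm{x}}_j)$ and abbreviate $i^*=i^*(\boldsymbol{\mathrm{x}}_j)$. The key observation is that, since $\hat{\alpha}_{(i,j)}=\hat{n}_{(i,j),r}/n^{(r)}$, the ratio $R_r:=\hat{\alpha}_{(i^*,j)}/\hat{\alpha}_{(i,j)}=\hat{n}_{(i^*,j),r}/\hat{n}_{(i,j),r}$ is unchanged unless the algorithm samples either $(i^*,j)$ (which increases $R_r$) or $(i,j)$ (which decreases it); sampling any other pair leaves $R_r$ fixed. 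By Lemma \ref{lethree}, $\hat{n}_{(i,j),r},\hat{n}_{(i^*,j),r}\to\infty$, so each such update multiplies $R_r$ by a factor tending to $1$.

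For the lower bound, I note that the algorithm samples $(i,j)$ only when $\hat{\mathcal{U}}_{(j),r}^b\ge\hat{\mathcal{U}}_{(j),r}^{non}$. Retaining only the $i$-term of the sum defining $\hat{\mathcal{U}}_{(j),r}^{non}$ gives $\hat{\alpha}_{(i^*,j)}^2/\hat{\sigma}_{i^*}^2(\boldsymbol{\mathrm{x}}_j)\ge\hat{\alpha}_{(i,j)}^2/\hat{\sigma}_i^2(\boldsymbol{\mathrm{x}}_j)$, i.e. $R_r\ge\hat{\sigma}_{i^*}(\boldsymbol{\mathrm{x}}_j)/\hat{\sigma}_i(\boldsymbol{\mathrm{x}}_j)$ immediately before the decrement. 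Since the decrement scales $R_r$ by $\hat{n}_{(i,j),r}/(\hat{n}_{(i,j),r}+1)\to1$ and $R_r$ never decreases at any other iteration, $R_r$ stays bounded below by $\hat{\sigma}_{i^*}(\boldsymbol{\mathrm{x}}_j)/\hat{\sigma}_i(\boldsymbol{\mathrm{x}}_j)$ times a factor approaching $1$; using the a.s. convergence of the variance estimators to the positive constants $\sigma^2(\boldsymbol{\mathrm{x}}_j)$, this yields $\liminf_r R_r\ge\sigma_{i^*}(\boldsymbol{\mathrm{x}}_j)/\sigma_i(\boldsymbol{\mathrm{x}}_j)>0$.

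For the upper bound, symmetrically, the algorithm samples $(i^*,j)$ only when $\hat{\mathcal{U}}_{(j),r}^b<\hat{\mathcal{U}}_{(j),r}^{non}$, i.e. $\hat{\alpha}_{(i^*,j)}^2/\hat{\sigma}_{i^*}^2(\boldsymbol{\mathrm{x}}_j)<\sum_{i'\ne i^*}\hat{\alpha}_{(i',j)}^2/\hat{\sigma}_{i'}^2(\boldsymbol{\mathrm{x}}_j)$. Here I invoke Lemma \ref{lefi} and its accompanying remark: every non-best ratio $\hat{\alpha}_{(i',j)}/\hat{\alpha}_{(i,j)}$ has finite $\limsup$, so the entire right-hand sum is bounded by $M\,\hat{\alpha}_{(i,j)}^2$ for some a.s. finite constant $M$. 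This gives $R_r^2<M\,\hat{\sigma}_{i^*}^2(\boldsymbol{\mathrm{x}}_j)$ just before the increment, which scales $R_r$ by $(\hat{n}_{(i^*,j),r}+1)/\hat{n}_{(i^*,j),r}\to1$; since $R_r$ increases at no other iteration, $\limsup_r R_r\le\sqrt{M}\,\sigma_{i^*}(\boldsymbol{\mathrm{x}}_j)<\infty$.

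The main obstacle is the bookkeeping in the upper bound: reducing the full sum $\hat{\mathcal{U}}_{(j),r}^{non}$ to a multiple of the single term $\hat{\alpha}_{(i,j)}^2$ requires Lemma \ref{lefi} to control all the other non-best allocations simultaneously, and one must confirm that the resulting constant $M$ is a.s. finite uniformly in $r$ (a finite sum of terms each having finite $\limsup$, with denominators converging to positive constants) rather than merely finite along subsequences. Beyond this, the argument is routine: the whole force of the proof is that $R_r$ moves only through sampling $(i^*,j)$ or $(i,j)$, each update is asymptotically neutral, and the algorithm's branching rule pins $R_r$ to the variance-ratio threshold exactly at the iterations where it would otherwise run away.
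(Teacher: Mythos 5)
Your proof is correct and follows essentially the same route as the paper's: the ratio $\hat{\alpha}_{(i^*(\x_j),j)}/\hat{\alpha}_{(i,j)}$ changes only at iterations where one of those two designs is sampled, the branching rule $\hat{\Ucal}^b_{j}$ versus $\hat{\Ucal}^{non}_{j}$ pins the ratio to a variance-ratio threshold at exactly those iterations, and Lemma \ref{lefi} together with the a.s.\ convergence of the variance estimators supplies the uniform constant needed to collapse the sum $\hat{\Ucal}^{non}_{j}$ onto the single term in $\hat{\alpha}_{(i,j)}^2$. The only cosmetic difference is that you argue directly via a threshold-plus-vanishing-increment bound (and spell out the lower bound, which the paper dispatches with ``similarly''), whereas the paper phrases the upper bound as an explicit contradiction at an iteration where $(i^*(\x_j),j)$ is sampled while the ratio already exceeds its constant $c$.
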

\begin{proof}
	For a fixed sample path, we consider a large enough $r$. It is sufficient to prove
	$\mathop{ \lim \sup }_{r \to \infty}$ $\hat{\alpha}_{(i^{*} (\x_{j}),j),r}/\hat{\alpha}_{(i,j),r} < \infty$, $i \ne i^{*} (\x_{j})$.
	
	Suppose there exists a non-best treatment $i_0 \ne i^{*} (\x_{j})$ under context $\boldsymbol{\mathrm{x}}_j$ such that
	$\mathop{ \lim \sup }_{r \to \infty}$ $\hat{\alpha}_{(i^{*} (\x_{j}),j),r}/\hat{\alpha}_{(i_0,j),r} = \infty.$ By Lemma \ref{lefi}, we have
	$ \hat{\alpha}_{(i,j),r} < c_{i} \hat{\alpha}_{(i_0,j),r}$, $i \ne i^{*} (\x_{j})$, where $c_i$'s are positive constants and $r=1,2,\dots$.
	Then, we could find an iteration $r_1$ where $(i^{r_1},j^{r_1}) = (i^{*} (\x_{j}),j)$ and
	\begin{equation}\label{eqalp}
		\left( \hat{\alpha}_{(i^{*} (\x_{j}),j),r_1}/\hat{\alpha}_{(i_0,j),r_1} \right)^2 > \frac{ b_{vU} }{ b_{vL} } \sum\nolimits_{i\ne i^{*} (\x_{j})} c_{i}^2 + 1.
	\end{equation}
	However, at iteration $r_1$,
	\begin{align*}
		\frac{ \big( \hat{\alpha}_{(i^{*} (\x_{j}),j),r_1} / \hat{\sigma}_{i^{*} (\x_{j}),r_1}(\boldsymbol{\mathrm{x}}_j)  \big)^2 }{ \sum_{i \ne i^{*} (\x_{j})} \big( \hat{\alpha}_{(i,j),r_1} / \hat{\sigma}_{i,r_1}(\boldsymbol{\mathrm{x}}_j) \big)^2 }
		&> \frac{  \hat{\alpha}_{(i^{*} (\x_{j}),j),r_1}^2 / b_{vU} }{ \sum_{i \ne i^{*} (\x_{j})} \hat{\alpha}_{(i,j),r_1}^2 / b_{vL} } > \frac{  \hat{\alpha}_{(i^{*} (\x_{j}),j),r_1}^2 / b_{vU} }{ \hat{\alpha}^2_{(i_0,j),r_1} \sum_{i \ne i^{*} (\x_{j})} c_{i}^2 / b_{vL} }  > 1.
	\end{align*}
	The last inequality holds because of (\ref{eqalp}). So, based on CR\&S Algorithm 1, $(i^{r_1},j^{r_1}) \ne (i^{*} (\x_{j}),j)$ because $\hat{\Ucal}_{(j),r_1}^b/\hat{\Ucal}_{(j),r_1}^{non}>1$, which leads to contradiction. Similarly, it is easy to show that $\mathop{ \lim \inf }_{r \to \infty} $ $ \hat{\alpha}_{(i^*(\boldsymbol{\mathrm{x}}_j),j),r}/\hat{\alpha}_{(i,j),r} > 0.$
\end{proof}

\begin{lemma}\label{lesix}
	For $j_1,j_2=1,\cdots,m$, $i_1,i_2=1,\cdots,k$, $i_1 \ne i^*(\boldsymbol{\mathrm{x}}_{j_1})$ and $i_2 \ne i^*(\boldsymbol{\mathrm{x}}_{j_2})$, $\mathop{\lim \sup}_{r \to \infty}$ $ \hat{\alpha}_{(i_1,j_1),r}/\hat{\alpha}_{(i_2,j_2),r} < \infty $ almost surely.
\end{lemma}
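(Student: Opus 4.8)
The plan is to argue by contradiction, mirroring the within-context reasoning of Lemmas \ref{lefi} and \ref{lefi2} but now exploiting that the selection quantity $\hat{\mathcal{V}}_{(i,j),r}$ is compared \emph{globally} across all contexts. The starting observation is that the ratio $\hat{n}_{(i_1,j_1),r}/\hat{n}_{(i_2,j_2),r}=\hat{\alpha}_{(i_1,j_1),r}/\hat{\alpha}_{(i_2,j_2),r}$ increases only when the pair $(i_1,j_1)$ is sampled, and that the algorithm samples a non-best pair $(i_1,j_1)$ only when it is the global minimizer, i.e. when $(\hat{i}_*,j^r)=(i_1,j_1)$; at such an iteration we necessarily have $\hat{\mathcal{V}}_{(i_1,j_1),r}\le\hat{\mathcal{V}}_{(i_2,j_2),r}$. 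So it suffices to show that a sufficiently large allocation ratio forces $\hat{\mathcal{V}}_{(i_1,j_1),r}>\hat{\mathcal{V}}_{(i_2,j_2),r}$, since this would prevent $(i_1,j_1)$ from being sampled and thereby cap the ratio.

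The algebraic heart of the argument is to show $\hat{\mathcal{V}}_{(i,j),r}$ is comparable to $\hat{\alpha}_{(i,j),r}$ up to bounded factors. Using $\hat{n}_{(i,j),r}=\hat{\alpha}_{(i,j),r}n^{(r)}$ I would rewrite
\[
\hat{\mathcal{V}}_{(i,j),r}=\frac{\hat{\delta}_{(i,j)}}{\hat{\sigma}^2_{i^*(\x_j)}(\x_j)/\hat{\alpha}_{(i^*(\x_j),j),r}+\hat{\sigma}^2_{i}(\x_j)/\hat{\alpha}_{(i,j),r}}.
\]
By Lemma \ref{lethree} all estimators $\hat{\delta}_{(i,j)}$ and $\hat{\sigma}^2_i(\x_j)$ converge almost surely, so for $r$ large I can fix finite positive bounds $\delta_L\le\hat{\delta}_{(i,j)}\le\delta_U$ and $b_{vL}\le\hat{\sigma}^2_i(\x_j)\le b_{vU}$, where $\delta_L>0$ because $\hat{\delta}_{(i,j)}\to(y_i(\x_j)-y_{i^*(\x_j)}(\x_j))^2>0$ under Assumption \ref{ass:1}. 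By Lemma \ref{lefi2} the ratio $\hat{\alpha}_{(i^*(\x_j),j),r}/\hat{\alpha}_{(i,j),r}$ is bounded above and below, so the term $\hat{\sigma}^2_{i^*(\x_j)}(\x_j)/\hat{\alpha}_{(i^*(\x_j),j),r}$ in the denominator is of the same order as $1/\hat{\alpha}_{(i,j),r}$. Hence there exist constants $0<\underline{c}\le\overline{c}<\infty$ with $\underline{c}\,\hat{\alpha}_{(i,j),r}\le\hat{\mathcal{V}}_{(i,j),r}\le\overline{c}\,\hat{\alpha}_{(i,j),r}$ for every non-best pair and all large $r$, giving $\hat{\mathcal{V}}_{(i_1,j_1),r}/\hat{\mathcal{V}}_{(i_2,j_2),r}\ge(\underline{c}/\overline{c})\,\hat{\alpha}_{(i_1,j_1),r}/\hat{\alpha}_{(i_2,j_2),r}$.

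With $c\triangleq\overline{c}/\underline{c}$, any iteration at which the allocation ratio exceeds $c$ satisfies $\hat{\mathcal{V}}_{(i_1,j_1),r}>\hat{\mathcal{V}}_{(i_2,j_2),r}$. Now suppose $\limsup_{r\to\infty}\hat{\alpha}_{(i_1,j_1),r}/\hat{\alpha}_{(i_2,j_2),r}=\infty$. Since the ratio rises only when $(i_1,j_1)$ is sampled and its increment equals $1/\hat{n}_{(i_2,j_2),r}\to0$ (Lemma \ref{lethree}), every sufficiently large record value of the ratio is attained at an iteration $r$ where $(i_1,j_1)$ is sampled and the pre-sampling ratio already exceeds $c$; there the non-best sampling forces $\hat{\mathcal{V}}_{(i_1,j_1),r}\le\hat{\mathcal{V}}_{(i_2,j_2),r}$, contradicting $\hat{\mathcal{V}}_{(i_1,j_1),r}>\hat{\mathcal{V}}_{(i_2,j_2),r}$. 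Thus the limsup is finite, and swapping the roles of $(i_1,j_1)$ and $(i_2,j_2)$ yields the companion bound. The two places demanding care are precisely the cross-context step, where Lemma \ref{lefi2} is essential to fold the best-design allocation into the non-best allocation and collapse $\hat{\mathcal{V}}_{(i,j),r}$ to order $\hat{\alpha}_{(i,j),r}$, and the conversion of the almost-sure statement ``$\limsup=\infty$'' into ``$(i_1,j_1)$ is actually sampled at an iteration with pre-sampling ratio above $c$,'' which hinges on the vanishing-increment consequence of the consistency $\hat{n}_{(i_2,j_2),r}\to\infty$.
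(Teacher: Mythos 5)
Your proposal is correct and follows essentially the same route as the paper's proof: argue by contradiction, use Lemma \ref{lefi2} together with the a.s.\ convergence of $\hat{\delta}_{(i,j)}$ and $\hat{\sigma}^2_i(\x_j)$ to sandwich $\hat{\mathcal{V}}_{(i,j),r}$ between constant multiples of $\hat{\alpha}_{(i,j),r}$, and conclude that once the allocation ratio exceeds a fixed threshold, $\hat{\mathcal{V}}_{(i_1,j_1),r}>\hat{\mathcal{V}}_{(i_2,j_2),r}$ so the algorithm cannot sample $(i_1,j_1)$. The paper carries out the sandwich inline with explicit constants $H_1,H_2,b_{dL},b_{dU}$ rather than as a stated two-sided bound, and you are somewhat more explicit about why the unbounded ratio forces a sampled iteration with pre-sampling ratio above the threshold, but these are only presentational differences.
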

\begin{proof}
	Suppose $i_1,i_2,j_1$ and $j_2$ satisfy
	$
	\mathop{\lim \sup}_{r \to \infty} \hat{\alpha}_{(i_2,j_2),r}/\hat{\alpha}_{(i_1,j_1),r} = \infty.
	$
	By Lemma \ref{lefi2}, there exist $H_1$ and $H_2$ such that $\hat{\alpha}_{(i^{*} (\x_{j_1}),j_1),r} < H_1 \hat{\alpha}_{(i_1,j_1),r}$ and $\hat{\alpha}_{(i^{*} (\x_{j_2}),j_2),r} > H_2 \hat{\alpha}_{(i_2,j_2),r}$ for all iterations $r$.
	
	Since $
	\mathop{\lim \sup}_{r \to \infty} \hat{\alpha}_{(i_2,j_2),r}/\hat{\alpha}_{(i_1,j_1),r} = \infty
	$, we could find an iteration $r_1$ such that
	\begin{equation}\label{deqcond1}
		\frac{\hat{\alpha}_{(i_2,j_2),r_1}}{\hat{\alpha}_{(i_1,j_1),r_1}} > \frac{b_{U} \big( b_{vU}/H_2 + b_{vU} \big) +1 }{b_{L} \big( b_{vL}/H_1 + b_{vL} \big)} \triangleq c > \frac{b_{U} \big( \hat{\sigma}^2_{i^{*} (\x_{j_2}),r_1}(\boldsymbol{\mathrm{x}}_{j_2})/H_2 + \hat{\sigma}^2_{i_2,r_1} (\boldsymbol{\mathrm{x}}_{j_2}) \big) +1 }{b_{L} \big( \hat{\sigma}^2_{i^{*} (\x_{j_1}),r_1}(\boldsymbol{\mathrm{x}}_{j_1})/H_1 + \hat{\sigma}^2_{i_1,r_1} (\boldsymbol{\mathrm{x}}_{j_1}) \big)} ,
	\end{equation}
	and $(i^{r_1},j^{r_1}) = (i_2,j_2)$.
	Then, we have
	\begin{align*}
		&\hat{\mathcal{V}}_{(i_2,j_2),r_1} - \hat{\mathcal{V}}_{(i_1,j_1),r_1} = \frac{\hat{\delta}_{(i_2,j_2),r_1}}{\big( \mathcal{S}_{(i^{*} (\x_{j_2}),j_2),r_1} + \mathcal{S}_{(i_2,j_2),r_1} \big)n^{(r_1)} }  -  \frac{\hat{\delta}_{(i_1,j_1),r_1}}{\big( \mathcal{S}_{(i^{*} (\x_{j_1}),j_1),r_1} + \mathcal{S}_{(i_1,j_1),r_1} \big) n^{(r_1)} } \\
		&>  \frac{b_{L}}{\big( \mathcal{S}_{(i^{*} (\x_{j_2}),j_2),r_1} + \mathcal{S}_{(i_2,j_2),r_1} \big) n^{(r_1)} } - \frac{b_{U}}{\big( \mathcal{S}_{(i^{*} (\x_{j_1}),j_1),r_1} + \mathcal{S}_{(i_1,j_1),r_1} \big)n^{(r_1)} }  \\
		&> \frac{b_{L}}{\hat{\sigma}^2_{i^{*} (\x_{j_2}),r_1}(\boldsymbol{\mathrm{x}}_{j_2})/H_2 + \hat{\sigma}^2_{i_2,r_1} (\boldsymbol{\mathrm{x}}_{j_2})} \hat{\alpha}_{(i_2,j_2),r_1} - \frac{b_{U}}{\hat{\sigma}^2_{i^{*} (\x_{j_1}),r_1}(\boldsymbol{\mathrm{x}}_{j_1})/H_1 + \hat{\sigma}^2_{i_1,r_1} (\boldsymbol{\mathrm{x}}_{j_1})} \hat{\alpha}_{(i_1,j_1),r_1}  \\
		&= \frac{b_{L} \big( \hat{\sigma}^2_{i^{*} (\x_{j_1}),r_1}(\boldsymbol{\mathrm{x}}_{j_1})/H_1 + \hat{\sigma}^2_{i_1,r_1} (\boldsymbol{\mathrm{x}}_{j_1}) \big) \hat{\alpha}_{(i_2,j_2),r_1} - b_{U} \big( \hat{\sigma}^2_{i^{*} (\x_{j_2}),r_1}(\boldsymbol{\mathrm{x}}_{j_2})/H_2 + \hat{\sigma}^2_{i_2,r_1} (\boldsymbol{\mathrm{x}}_{j_2}) \big) \hat{\alpha}_{(i_1,j_1),r_1}  }{\big( \hat{\sigma}^2_{i^{*} (\x_{j_1}),r_1}(\boldsymbol{\mathrm{x}}_{j_1})/H_1 + \hat{\sigma}^2_{i_1,r_1} (\boldsymbol{\mathrm{x}}_{j_1}) \big) \big( \hat{\sigma}^2_{i^{*} (\x_{j_2}),r_1}(\boldsymbol{\mathrm{x}}_{j_2})/H_2 + \hat{\sigma}^2_{i_2,r_1} (\boldsymbol{\mathrm{x}}_{j_2}) \big)} > 0.
	\end{align*}
	The last inequality holds because of (\ref{deqcond1}). So, $\hat{\mathcal{V}}_{(i_2,j_2),r_1} - \hat{\mathcal{V}}_{(i_1,j_1),r_1} > 0$. We cannot sample treatment $i_2$ of context $\boldsymbol{\mathrm{x}}_{j_2}$ at iteration $r_1$ in CR\&S algorithm 1, contradicting $r_1$'s definition.
\end{proof}

\begin{lemma}\label{lem::order1}
	For any two contexts $\boldsymbol{\mathrm{x}}_{j_1}$ and $\boldsymbol{\mathrm{x}}_{j_2}$ and any two treatments $i_1$ and $i_2$, $\mathop{ \lim \inf }_{r \to \infty} \frac{\hat{\alpha}_{(i_1,j_1),r}}{\hat{\alpha}_{(i_2,j_2),r}}$ $> 0$ almost surely.
\end{lemma}

This lemma is straightforward from Lemmas \ref{lefi}, \ref{lefi2} and \ref{lesix}. Lemma \ref{lem::order1} implies that $ b_{1}\hat{\alpha}_{(i_2,j_2),r} \le \hat{\alpha}_{(i_1,j_1),r} \le b_{2}\hat{\alpha}_{(i_2,j_2),r}$ almost surely, where $b_{1}$ and $b_{2}$ are positive constants, $i_1,i_2 = 1,\cdots,k$ and $j_1,j_2 = 1, \cdots,m$. Then Theorem 4 is obvious.

\begin{remark}
	Starting from Proposition \ref{propos1} below, we will make frequent use of the Landau notation. Suppose $a,b,c:\mathbb{N} \to [0,\infty)$ where $\mathbb{N}$ is the set of natural numbers. Then
	\begin{align*}
		&a(r) = O(b(r)) \iff  \limsup_{r\to \infty} a(r)/b(r) < \infty,  \\
		&a(r) = \Theta(b(r))  \iff \limsup_{r\to \infty} a(r)/b(r) < \infty \text{ and } \liminf_{r \to \infty} a(r)/b(r) > 0.
	\end{align*}
	Note that $
	a(r) = \Theta(b(r)) \Longrightarrow a(r) = O(b(r))
	$,
	and that $
	a(r) = \Theta(b(r)), \ b(r) = \Theta(c(r)) \Longrightarrow a(r) = \Theta(c(r)).
	$ Then, by Lemma \ref{lem::order1}, we have $\hat{n}_{(i,j),r} = \Theta(n^{(r)})$, $i=1,\dots,k$, $j=1,\dots,m$.
\end{remark}

\begin{proposition}\label{propos1}
	For $\hat{\alpha}_{i,j}$ generated by the CR$\&$S Algorithm 1, $i=1,2,...,k$ and $j=1,2,...,m$, we have
	$\Bigg| \frac{\hat{\alpha}_{i^*(\x_j),j}^2}{\sigma_{i^*(\x_j)}^2(\x_j)}-
	\sum_{i=1,i\neq i^*(\x_j)}^k \frac{\hat{\alpha}_{i,j}^2} {\sigma_{i}^2(\x_j)}\Bigg| \leq O\left( \sqrt{ \frac{  \log \log n^{(r)} }{n^{(r)}}}\right),
	$
	almost surely, for any $j=1,2,\cdots,m$.
\end{proposition}
\begin{proof}
	
	For a fixed sample path and a given context $\x_j$, the iterations where any treatment of $\x_j$ is sampled form into a subsequence of $\{1,2,\dots\}$. We denote this subsequence by $\{r^{(j)}_l,l=1,2,\dots\}$.
	
	Suppose $(i^{r^{(j)}_s},j^{r^{(j)}_s}) = (i_1,j)$ at iteration $r^{(j)}_s$ where $s \ge 1$ is large enough and $i_1 \ne i^*(\x_j)$, $(i^{r^{(j)}_{s+h}},j^{r^{(j)}_{s+h}}) = (i^*(\x_j),j)$ at the successive iterations $r^{(j)}_{s+h}$ where $1 \le h \le t-1$, and $(i^{r^{(j)}_{s+t}},j^{r^{(j)}_{s+t}}) = (i_2,j)$ and $i_2 \ne i^*(\x_j)$ at iteration $r^{(j)}_{s+t}$.
	Since any non-best treatment of context $\x_j$ would not be sampled from iteration $r^{(j)}_{s}+1$ to $r^{(j)}_{s+t}-1$, we have $\hat{n}_{ (i,j), r} \le  \hat{n}_{ (i,j), r^{(j)}_{s}} +1$ for $i \ne i^*(\x_j)$ and $r^{(j)}_s+1 \le r \le r^{(j)}_{s+t}-1$. Since $\hat{n}_{ (i,j), r} = \Theta(\hat{n}_{ (i,j), r^{(j)}_{s}})$ for $i \ne i^*(\x_j)$, $\hat{n}_{ (i,j), r} = \Theta(n^{(r)})$ and $\hat{n}_{ (i,j), r^{(j)}_{s}} = \Theta(n^{(r^{(j)}_{s})})$ by Lemma \ref{lem::order1},  we have $\hat{n}_{ (i,j), r} = \Theta( n^{(r^{(j)}_{s})} )$ for $i=1,\dots,k$ and $r^{(j)}_s +1 \le r \le r^{(j)}_{s+t}-1$. By the law of the iterated logarithm,
	$
	\left| \frac{\hat{\sigma}^2_{i,r}(\boldsymbol{\mathrm{x}}_{j})}{\sigma^2_{i}(\boldsymbol{\mathrm{x}}_{j})} - 1 \right| = O\left(\sqrt{\frac{\log \log \hat{n}_{ (i,j), r}}{\hat{n}_{ (i,j), r}}}\right) = O\left(\sqrt{\frac{\log \log n^{(r^{(j)}_{s})} }{ n^{(r^{(j)}_{s})} }}\right), \ r^{(j)}_s+1 \le r \le r^{(j)}_{s+t}-1,
	$
	which indicates $\left| \hat{\sigma}^2_{i,r}(\boldsymbol{\mathrm{x}}_{j}) / \hat{\sigma}^2_{i,r^{(j)}_{s}}(\boldsymbol{\mathrm{x}}_{j}) - 1 \right| = O\left(\sqrt{\frac{\log \log n^{(r^{(j)}_{s})} }{ n^{(r^{(j)}_{s})} }}\right)$, $r^{(j)}_s+1 \le r \le r^{(j)}_{s+t}-1$.
	
	Then for $r^{(j)}_s+1 \le r \le r^{(j)}_{s+t}-1$,
	\begin{align*}
		&-\frac{\hat{n}^2_{ (i^*(\x_j),j), r}}{  \hat{\sigma}^2_{i^*(\x_j),r}(\boldsymbol{\mathrm{x}}_{j}) } + \sum_{i \ne i^{*} (\x_{j})} \frac{ \hat{n}^2_{ (i,j), r} }{ \hat{\sigma}^2_{i,r}(\boldsymbol{\mathrm{x}}_{j}) } \\
		\le & -\frac{\hat{n}^2_{ (i^*(\x_j),j), r^{(j)}_s}}{  \hat{\sigma}^2_{i^*(\x_j),r^{(j)}_s}(\boldsymbol{\mathrm{x}}_{j}) \left( 1 + O\left(\sqrt{\frac{\log \log n^{(r^{(j)}_{s})} }{ n^{(r^{(j)}_{s})} }}\right) \right) } + \sum_{i \ne i^{*} (\x_{j})} \frac{ \hat{n}^2_{ (i,j), r^{(j)}_s} }{ \hat{\sigma}^2_{i,r^{(j)}_s}(\boldsymbol{\mathrm{x}}_{j}) \left( 1 - O\left(\sqrt{\frac{\log \log n^{(r^{(j)}_{s})} }{ n^{(r^{(j)}_{s})} }}\right) \right) }  \\
		& + \frac{ 2\hat{n}_{ (i_1,j), r^{(j)}_s} + 1 }{ \hat{\sigma}^2_{i_1,r^{(j)}_{s}}(\boldsymbol{\mathrm{x}}_{j}) \left( 1 - O\left(\sqrt{\frac{\log \log n^{(r^{(j)}_{s})} }{ n^{(r^{(j)}_{s})} }}\right) \right) }  \\
		=&  -\frac{\hat{n}^2_{ (i^*(\x_j),j), r^{(j)}_s}}{  \hat{\sigma}^2_{i^*(\x_j),r^{(j)}_s}(\boldsymbol{\mathrm{x}}_{j}) } \left( 1 - O\left(\sqrt{\frac{\log \log n^{(r^{(j)}_{s})} }{ n^{(r^{(j)}_{s})} }}\right) \right)  + \sum_{i \ne i^{*} (\x_{j})} \frac{ \hat{n}^2_{ (i,j), r^{(j)}_s} }{ \hat{\sigma}^2_{i,r^{(j)}_s}(\boldsymbol{\mathrm{x}}_{j})  } \left( 1 + O\left(\sqrt{\frac{\log \log n^{(r^{(j)}_{s})} }{ n^{(r^{(j)}_{s})} }}\right) \right) \\
		&+ \frac{ 2\hat{n}_{ (i_1,j), r^{(j)}_s} + 1 }{ \hat{\sigma}^2_{i_1,r^{(j)}_{s}}(\boldsymbol{\mathrm{x}}_{j}) } \left( 1 + O\left(\sqrt{\frac{\log \log n^{(r^{(j)}_{s})} }{ n^{(r^{(j)}_{s})} }}\right) \right) \\
		=& -\frac{\hat{n}^2_{ (i^*(\x_j),j), r^{(j)}_s}}{  \hat{\sigma}^2_{i^*(\x_j),r^{(j)}_s}(\boldsymbol{\mathrm{x}}_{j}) }  + \sum_{i \ne i^{*} (\x_{j})} \frac{ \hat{n}^2_{ (i,j), r^{(j)}_s} }{ \hat{\sigma}^2_{i,r^{(j)}_s}(\boldsymbol{\mathrm{x}}_{j})  } + O\left( n^{(r^{(j)}_{s})} \sqrt{ n^{(r^{(j)}_{s})} \log \log n^{(r^{(j)}_{s})} }\right) \\
		\le & O\left( n^{(r^{(j)}_{s})} \sqrt{ n^{(r^{(j)}_{s})} \log \log n^{(r^{(j)}_{s})} } \right),
	\end{align*}
	where the last inequality holds because $(i^{r^{(j)}_s},j^{r^{(j)}_s}) = (i_1,j)$ and $i_1 \ne i^*(\x_j)$ so that $$-\hat{n}^2_{ (i^*(\x_j),j), r^{(j)}_s} /  \hat{\sigma}^2_{i^*(\x_j),r^{(j)}_s}(\boldsymbol{\mathrm{x}}_{j})  + \sum_{i \ne i^{*} (\x_{j})}  \hat{n}^2_{ (i,j), r^{(j)}_s} / \hat{\sigma}^2_{i,r^{(j)}_s}(\boldsymbol{\mathrm{x}}_{j})   \le 0.$$
	Since $\hat{n}_{ (i,j), r} = \Theta( n^{(r)} )$ by Lemma \ref{lem::order1} for all $i = 1,\dots,k$ and at the same time, $\hat{n}_{ (i,j), r} =\Theta( n^{(r^{(j)}_{s})} )$ for $r^{(j)}_s+1 \le r \le r^{(j)}_{s+t}-1$, we have $n^{(r^{(j)}_{s})} = \Theta(n^{(r)})$ for $r^{(j)}_s +1\le r \le r^{(j)}_{s+t}-1$. So
	\begin{align*}
		0<&-\frac{\hat{\alpha}^2_{ (i^*(\x_j),j), r}}{  \hat{\sigma}^2_{i^*(\x_j),r}(\boldsymbol{\mathrm{x}}_{j}) } + \sum_{i \ne i^{*} (\x_{j})} \frac{ \hat{\alpha}^2_{ (i,j), r} }{ \hat{\sigma}^2_{i,r}(\boldsymbol{\mathrm{x}}_{j}) }
		= \frac{1}{(n^{(r)})^2} \left( -\frac{\hat{n}^2_{ (i^*(\x_j),j), r}}{  \hat{\sigma}^2_{i^*(\x_j),r}(\boldsymbol{\mathrm{x}}_{j}) } + \sum_{i \ne i^{*} (\x_{j})} \frac{ \hat{n}^2_{ (i,j), r} }{ \hat{\sigma}^2_{i,r}(\boldsymbol{\mathrm{x}}_{j}) }  \right)  \\
		\le& \frac{1}{(n^{(r)})^2} O\left( n^{(r^{(j)}_{s})} \sqrt{ n^{(r^{(j)}_{s})} \log \log n^{(r^{(j)}_{s})} } \right) = O\left( \sqrt{ \frac{ \log \log n^{(r)} }{n^{(r)}}} \right).
	\end{align*}
	
	Similarly, suppose $(i^{r^{(j)}_s},j^{r^{(j)}_s}) = (i^*(\x_j),j)$ at iteration $r^{(j)}_s$ where $s \ge 1$ is large enough, $(i^{r^{(j)}_{s+h}},j^{r^{(j)}_{s+h}}) = (i_{(h)},j)$ at the successive iterations $r^{(j)}_{s+h}$ where $1 \le h \le t-1$ and $i_{(h)} \ne i^*(\x_j)$, and $(i^{r^{(j)}_{s+t}},j^{r^{(j)}_{s+t}}) = (i^*(\x_j),j)$ at iteration $r^{(j)}_{s+t}$. We would have
	$
	0>-\frac{\hat{\alpha}^2_{ (i^*(\x_j),j), r}}{  \hat{\sigma}^2_{i^*(\x_j),r}(\boldsymbol{\mathrm{x}}_{j}) } + \sum_{i \ne i^{*} (\x_{j})} \frac{ \hat{\alpha}^2_{ (i,j), r} }{ \hat{\sigma}^2_{i,r}(\boldsymbol{\mathrm{x}}_{j}) } \ge -O\left( \sqrt{ \frac{ \log \log n^{(r)} }{n^{(r)}}} \right)
	$
	for $r^{(j)}_s+1 \le r \le r^{(j)}_{s+t}-1$. Therefore,
	\begin{align*}
		&\left| -\frac{\hat{\alpha}^2_{ (i^*(\x_j),j), r}}{  \sigma^2_{i^*(\x_j)}(\boldsymbol{\mathrm{x}}_{j}) } + \sum_{i \ne i^{*} (\x_{j})} \frac{ \hat{\alpha}^2_{ (i,j), r} }{ \sigma^2_{i}(\boldsymbol{\mathrm{x}}_{j}) } \right|  \\
		\le & \left| -\frac{\hat{\alpha}^2_{ (i^*(\x_j),j), r}}{  \sigma^2_{i^*(\x_j)}(\boldsymbol{\mathrm{x}}_{j}) } + \frac{\hat{\alpha}^2_{ (i^*(\x_j),j), r}}{  \hat{\sigma}^2_{i^*(\x_j),r}(\boldsymbol{\mathrm{x}}_{j}) } \right| + \left| -\frac{\hat{\alpha}^2_{ (i^*(\x_j),j), r}}{  \hat{\sigma}^2_{i^*(\x_j),r}(\boldsymbol{\mathrm{x}}_{j}) } + \sum_{i \ne i^{*} (\x_{j})} \frac{ \hat{\alpha}^2_{ (i,j), r} }{ \hat{\sigma}^2_{i,r}(\boldsymbol{\mathrm{x}}_{j}) } \right| \\
		&+ \left| -\sum_{i \ne i^{*} (\x_{j})} \frac{ \hat{\alpha}^2_{ (i,j), r} }{ \hat{\sigma}^2_{i,r}(\boldsymbol{\mathrm{x}}_{j}) } + \sum_{i \ne i^{*} (\x_{j})} \frac{ \hat{\alpha}^2_{ (i,j), r} }{ \sigma^2_{i}(\x_{j})} \right|  \\
		= &O\left(\sqrt{\frac{ \log\log \hat{n}_{ (i^*(\x_j),j), r} }{ \hat{n}_{ (i^*(\x_j),j), r} }}\right) + O\left( \sqrt{ \frac{ \log \log n^{(r)} }{n^{(r)}}} \right) + \sum_{i\ne i^{*} (\x_{j})} O\left(\sqrt{\frac{ \log\log \hat{n}_{ (i,j), r} }{ \hat{n}_{ (i,j), r} }}\right)  \\
		=& O\left( \sqrt{ \frac{ \log \log n^{(r)} }{n^{(r)}}} \right),
	\end{align*}
	where the last equality holds because $\hat{n}_{ (i,j), r} = \Theta(n^{(r)})$ for all $i$ and $\x_j$ by Lemma \ref{lem::order1}.
\end{proof}

\begin{remark}
	The proof relies on the necessary condition for the CR$\&$S Algorithm 1 to sample non-best treatments under context $\x_j$:
	$
	\frac{\hat{n}_{(\hat{i}_r^*(\boldsymbol{\mathrm{x}}_j),j),r}^2}{\hat{\sigma}^2_{\hat{i}_r^*(\boldsymbol{\mathrm{x}}_j),r}(\boldsymbol{\mathrm{x}}_j)} \ge \sum_{i \ne \hat{i}_r^*(\boldsymbol{\mathrm{x}}_j)} \frac{\hat{n}_{(i,j),r}^2}{\hat{\sigma}_{i,r}^2(\boldsymbol{\mathrm{x}}_j)}.
	$
	We have frequently used the following tricks (can be proved by the Taylor expansion)
	$
	\left( 1 + O\left(\sqrt{\frac{\log \log n^{(r^{(j)}_{s})} }{ n^{(r^{(j)}_{s})} }}\right) \right)^{-1} =  1 - O\left(\sqrt{\frac{\log \log n^{(r^{(j)}_{s})} }{ n^{(r^{(j)}_{s})} }}\right), $ and $
	\left( 1 - O\left(\sqrt{\frac{\log \log n^{(r^{(j)}_{s})} }{ n^{(r^{(j)}_{s})} }}\right) \right)^{-1} =  1 + O\left(\sqrt{\frac{\log \log n^{(r^{(j)}_{s})} }{ n^{(r^{(j)}_{s})} }}\right).
	$
	During the proof here and below, keep in mind that
	$
	\hat{n}_{(i,j),r_1} \le \hat{n}_{(i,j),r} \le \hat{n}_{(i,j),r_2}, \text{ for } r_1 \le r \le r_2, \ i=1,\dots,k, \  j=1,\dots,m.
	$
\end{remark}

\begin{lemma}\label{lem::supple}
	For any context $\x_j$, a non-best treatment $i_1$ of $\x_j$, and any positive constant $c_2$, suppose
	{\small
		\begin{align*}
			&(i_1,j) = \mathop{\arg\min}_{j\in\{1,...,m\},i\in\{1,...,k\}\setminus\{i^*(\x_j)\}}\hat{\Vcal}_{(i,j),r} ,  \text{ and }
			s = \mathop{\arg\min}_l \left\{ l > 0: (i_1,j) = \mathop{\arg\min}_{j\in\{1,...,m\},i\in\{1,...,k\}\setminus\{i^*(\x_j)\}}\hat{\Vcal}_{(i,j),r+l} \right\}.
	\end{align*}}
	If $\hat{n}_{ (i^*(\x_j),j), r+s} - \hat{n}_{ (i^*(\x_j),j), r} > c_1 \sqrt{n^{(r)} \log \log n^{(r)}}$ where $c_1$ is a large enough value that may depend on $c_2$ but is independent of $r$, then there exists a non-best treatment $i_2$ of $\x_j$ such that
	$
	\frac{\hat{n}_{ (i_2,j), r+s}}{\hat{n}_{ (i^*(\x_j),j), r+s}} > \frac{\hat{n}_{ (i_2,j), r}}{\hat{n}_{ (i^*(\x_j),j), r}} \left( 1+c_2 \sqrt{ \frac{\log \log n^{(r)}}{n^{(r)}} }  \right)
	$
	almost surely.
\end{lemma}
\begin{proof}
	For a fixed sample path, suppose there exists $c_2>0$ such that
	\begin{align}\label{ineq::nonbd}
		\frac{\hat{n}_{ (i,j), r+s}}{\hat{n}_{ (i^*(\x_j),j), r+s}} \le \frac{\hat{n}_{ (i,j), r}}{\hat{n}_{ (i^*(\x_j),j), r}} \left( 1+c_2 \sqrt{ \frac{\log \log n^{(r)}}{n^{(r)}} }  \right)
	\end{align}
	for all $i \ne i^*(\x_j)$, no matter how large $c_1$ is. Let $c_1$ be a large positive constant whose value is to be decided and $s_1 = \mathop{\arg\max}_l \left\{ l < s: (i^{(r+l)},j^{(r+l)}) = (i^*(\x_j),j) \right\}$. Then $\hat{n}_{ (i^*(\x_j),j), r+s} = \hat{n}_{ (i^*(\x_j),j), r+s_1}+1$ because $r+s_1$ is the last iteration before $r+s$ where we sample $(i^*(\x_j),j)$. We will show that $(i^{(r+s_1)},j^{(r+s_1)}) = (i^*(\x_j),j)$ is impossible when $c_1$ is large enough and thus leads to contradiction. Before that, we need to bound some quantities.
	
	Let $r$ be large enough. By the definition of $s$, $\hat{n}_{ (i_1,j), r+s} \le \hat{n}_{ (i_1,j), r}+1$ because the only possible iteration between $r$ and $r+s-1$ where we sample $(i_1,j)$ is iteration $r$. By Lemma \ref{lem::order1}, $\hat{n}_{ (i_1,j), r+s} = \Theta(n^{(r+s)})$ and $\hat{n}_{ (i_1,j), r} = \Theta(n^{(r)})$, thus $n^{(r+s)} = \Theta(n^{(r)})$. Since $n^{(r)} \le n^{(r+s_1)} \le n^{(r+s)}$, we have $\hat{n}_{ (i^*(\x_j),j), r+s_1} = \Theta( n^{(r+s_1)} ) = \Theta( n^{(r)} )$. Let $\hat{n}_{ (i^*(\x_j),j), r+s_1} \le c_3 n^{(r)}$. By \eqref{ineq::nonbd},
	\begin{align}\label{ineq::contrad1}
		&\frac{ \hat{n}^2_{ (i,j), r+s} }{  \hat{n}^2_{ (i^*(\x_j),j), r+s} }
		< \frac{ \hat{n}^2_{ (i,j), r} }{  \hat{n}^2_{ (i^*(\x_j),j), r} }  \left( 1+ (2c_2+c_2^2c_4) \sqrt{ \frac{\log \log n^{(r)}}{n^{(r)}} }  \right),  \nonumber  \\
		&\frac{ \hat{n}^2_{ (i,j), r+s_1} }{  \hat{n}^2_{ (i^*(\x_j),j), r+s_1} } < \frac{ \hat{n}^2_{ (i,j), r+s} }{  (\hat{n}_{ (i^*(\x_j),j), r+s} - 1 )^2 } = \frac{ \hat{n}^2_{ (i,j), r+s} }{  \hat{n}_{ (i^*(\x_j),j), r+s}^2 } \frac{ \hat{n}^2_{ (i^*(\x_j),j), r+s} }{  (\hat{n}_{ (i^*(\x_j),j), r+s} - 1 )^2 }  \nonumber  \\
		&\phantom{\frac{ \hat{n}^2_{ (i,j), r+s_1} }{  \hat{n}^2_{ (i^*(\x_j),j), r+s_1} }}  <\frac{ \hat{n}^2_{ (i,j), r} }{  \hat{n}^2_{ (i^*(\x_j),j), r} }  \left( 1+ (2c_2+c_2^2c_4) \sqrt{ \frac{\log \log n^{(r)}}{n^{(r)}} }  \right) \left( 1+ c_5 \sqrt{ \frac{\log \log n^{(r)}}{n^{(r)}} }  \right)  \nonumber  \\
		&\phantom{\frac{ \hat{n}^2_{ (i,j), r+s_1} }{  \hat{n}^2_{ (i^*(\x_j),j), r+s_1} }} < \frac{ \hat{n}^2_{ (i,j), r} }{  \hat{n}^2_{ (i^*(\x_j),j), r} } \left( 1+ h(c_2,c_4,c_5) \sqrt{ \frac{\log \log n^{(r)}}{n^{(r)}} }  \right),
	\end{align}
	where $c_4$ is a small positive constant such that $c_4 \sqrt{ \log \log n^{(r)} / n^{(r)} } > \log \log n^{(r)} / n^{(r)}$ and $c_5$ is also a small positive constant. The $h(c_2,c_4,c_5)$ is a polynomial of $c_2,c_4,c_5$.
	
	At the same time, let $c_6 \sqrt{ n^{(r)} \log \log n^{(r)}  }  > 1$. Since $\hat{n}_{ (i^*(\x_j),j), r+s} - \hat{n}_{ (i^*(\x_j),j), r} > c_1 \sqrt{n^{(r)} \log \log n^{(r)}}$ and $\left(  \hat{n}_{ (i^*(\x_j),j), r+s_1} + \hat{n}_{ (i^*(\x_j),j), r}  \right) / \hat{n}_{ (i^*(\x_j),j), r+s_1} > 1 $,  we can further bound $\frac{\hat{n}^2_{ (i_1,j), r+s_1}}{ \hat{n}^2_{ (i^*(\x_j),j), r+s_1} }$ as follows.
	\begin{align}\label{ineq::bestbd}
		&\frac{\hat{n}^2_{ (i^*(\x_j),j), r} }{ \hat{n}^2_{ (i^*(\x_j),j), r+s_1} } = 1 - \frac{\hat{n}_{ (i^*(\x_j),j), r+s_1} - \hat{n}_{ (i^*(\x_j),j), r} }{ \hat{n}_{ (i^*(\x_j),j), r+s_1} } \frac{\hat{n}_{ (i^*(\x_j),j), r+s_1} + \hat{n}_{ (i^*(\x_j),j), r} }{ \hat{n}_{ (i^*(\x_j),j), r+s_1} } < 1- \frac{c_1-c_6}{c_3}\sqrt{ \frac{\log \log n^{(r)}}{n^{(r)}}},  \nonumber  \\
		&\frac{\hat{n}^2_{ (i_1,j), r+s_1}}{ \hat{n}^2_{ (i^*(\x_j),j), r+s_1} } \le \frac{(\hat{n}_{ (i_1,j), r}+1)^2}{ \hat{n}^2_{ (i^*(\x_j),j), r+s_1} } = \frac{\hat{n}_{ (i_1,j), r}^2}{ \hat{n}^2_{ (i^*(\x_j),j), r} }  \frac{\hat{n}^2_{ (i^*(\x_j),j), r}}{ \hat{n}^2_{ (i^*(\x_j),j), r+s_1} } \frac{(\hat{n}_{ (i_1,j), r}+1)^2}{ \hat{n}_{ (i_1,j), r}^2 } \nonumber \\
		&\phantom{\frac{\hat{n}^2_{ (i_1,j), r+s_1}}{ \hat{n}^2_{ (i^*(\x_j),j), r+s_1} }= \frac{(\hat{n}_{ (i_1,j), r}+1)^2}{ \hat{n}^2_{ (i^*(\x_j),j), r+s_1} }} < \frac{\hat{n}_{ (i_1,j), r}^2}{ \hat{n}^2_{ (i^*(\x_j),j), r} }  \left( 1 - \left( \frac{c_1-c_6}{c_3} -c_7\right) \sqrt{ \frac{\log \log n^{(r)}}{n^{(r)}}} \right) \nonumber   \\
		&  \phantom{\frac{\hat{n}^2_{ (i_1,j), r+s_1}}{ \hat{n}^2_{ (i^*(\x_j),j), r+s_1} }= \frac{(\hat{n}_{ (i_1,j), r}+1)^2}{ \hat{n}^2_{ (i^*(\x_j),j), r+s_1} }} \triangleq \frac{\hat{n}_{ (i_1,j), r}^2}{ \hat{n}^2_{ (i^*(\x_j),j), r} }  \left( 1 -  c_8 \sqrt{ \frac{\log \log n^{(r)}}{n^{(r)}}} \right),
	\end{align}
	where $c_7$ is a small positive constant independent of $c_1$ such that $c_7 \sqrt{ \log \log n^{(r)} / n^{(r)} } > (\hat{n}_{ (i_1,j), r}+1)^2/\hat{n}^2_{ (i_1,j), r} - 1$. By the law of the iterated logarithm, there exists $c_9>0$ such that
	\begin{align}\label{ineq::varbd}
		\left| \frac{ 1 }{ \hat{\sigma}^2_{i,r+s_1}(\boldsymbol{\mathrm{x}}_{j}) } / \frac{ 1 }{ \hat{\sigma}^2_{i,r}(\boldsymbol{\mathrm{x}}_{j}) } - 1\right| < c_9 \sqrt{ \frac{\log \log n^{(r)}}{n^{(r)}}}, \ \text{for all } i=1,\dots,k,
	\end{align}
	where we use an uniform bound to avoid the parameter redundancy. Next, we show the contradiction. At iteration $r$, since a non-best treatment of $\x_j$ is sampled,
	\begin{align}\label{ineq::iteratr}
		\frac{\hat{n}^2_{ (i^*(\x_j),j), r}}{  \hat{\sigma}^2_{i^*(\x_j),r}(\boldsymbol{\mathrm{x}}_{j}) } - \sum_{i \ne i^{*} (\x_{j})} \frac{ \hat{n}^2_{ (i,j), r} }{ \hat{\sigma}^2_{i,r}(\boldsymbol{\mathrm{x}}_{j}) } > 0.
	\end{align}
	At iteration $r+s_1$,
	\begin{align}
		&\frac{1}{ \hat{n}^2_{ (i^*(\x_j),j), r+s_1} }\left( \frac{\hat{n}^2_{ (i^*(\x_j),j), r+s_1}}{  \hat{\sigma}^2_{i^*(\x_j),r+s_1}(\boldsymbol{\mathrm{x}}_{j}) } - \sum_{i \ne i^{*} (\x_{j})} \frac{ \hat{n}^2_{ (i,j), r+s_1} }{ \hat{\sigma}^2_{i,r+s_1}(\boldsymbol{\mathrm{x}}_{j}) }  \right)  \nonumber  \\
		=&  \frac{1}{  \hat{\sigma}^2_{i^*(\x_j),r+s_1}(\boldsymbol{\mathrm{x}}_{j}) } - \sum_{\begin{subarray}{c}
				i \ne i^{*} (\x_{j}) \\
				i \ne i_1
		\end{subarray}} \frac{ 1 }{ \hat{\sigma}^2_{i,r+s_1}(\boldsymbol{\mathrm{x}}_{j}) } \frac{\hat{n}^2_{ (i,j), r+s_1}}{ \hat{n}^2_{ (i^*(\x_j),j), r+s_1} }  -  \frac{ 1 }{ \hat{\sigma}^2_{i_1,r+s_1}(\boldsymbol{\mathrm{x}}_{j}) } \frac{\hat{n}^2_{ (i_1,j), r+s_1}}{ \hat{n}^2_{ (i^*(\x_j),j), r+s_1} }  \nonumber \\
		>& \frac{1}{  \hat{\sigma}^2_{i^*(\x_j),r}(\boldsymbol{\mathrm{x}}_{j}) } \left(1 -c_9 \sqrt{ \frac{\log \log n^{(r)}}{n^{(r)}}} \right)  \nonumber  \\
		&- \sum_{\begin{subarray}{c}
				i \ne i^{*} (\x_{j}) \\
				i \ne i_1
		\end{subarray}} \frac{ 1 }{ \hat{\sigma}^2_{i,r}(\boldsymbol{\mathrm{x}}_{j}) }  \left(1 + c_9 \sqrt{ \frac{\log \log n^{(r)}}{n^{(r)}}} \right) \frac{\hat{n}^2_{ (i,j), r}}{ \hat{n}^2_{ (i^*(\x_j),j), r} } \left( 1+ h(c_2,c_4,c_5) \sqrt{ \frac{\log \log n^{(r)}}{n^{(r)}} }  \right)  \nonumber  \\
		&-  \frac{ 1 }{ \hat{\sigma}^2_{i_1,r}(\boldsymbol{\mathrm{x}}_{j}) } \left(1 + c_9 \sqrt{ \frac{\log \log n^{(r)}}{n^{(r)}}} \right)  \frac{\hat{n}^2_{ (i_1,j), r}}{ \hat{n}^2_{ (i^*(\x_j),j), r} } \left( 1 -  c_8 \sqrt{ \frac{\log \log n^{(r)}}{n^{(r)}}}  \right) \label{ineq::rs1} \\
		>& \frac{1}{  \hat{\sigma}^2_{i^*(\x_j),r}(\boldsymbol{\mathrm{x}}_{j}) } - \sum_{
			i \ne i^{*} (\x_{j}) } \frac{ 1 }{ \hat{\sigma}^2_{i,r}(\boldsymbol{\mathrm{x}}_{j}) }  \frac{\hat{n}^2_{ (i,j), r}}{ \hat{n}^2_{ (i^*(\x_j),j), r} } - \frac{c_9}{  \hat{\sigma}^2_{i^*(\x_j),r}(\boldsymbol{\mathrm{x}}_{j}) }  \sqrt{ \frac{\log \log n^{(r)}}{n^{(r)}}}  \nonumber  \\
		&- \sum_{\begin{subarray}{c}
				i \ne i^{*} (\x_{j}) \\
				i \ne i_1
		\end{subarray}} \frac{ 1 }{ \hat{\sigma}^2_{i,r}(\boldsymbol{\mathrm{x}}_{j}) }  \frac{\hat{n}^2_{ (i,j), r}}{ \hat{n}^2_{ (i^*(\x_j),j), r} } \left( c_9 + h(c_2,c_4,c_5) + c_{10} \right) \sqrt{ \frac{\log \log n^{(r)}}{n^{(r)}}}  \nonumber  \\
		&+ \frac{ 1 }{ \hat{\sigma}^2_{i_1,r}(\boldsymbol{\mathrm{x}}_{j}) }  \frac{\hat{n}^2_{ (i_1,j), r}}{ \hat{n}^2_{ (i^*(\x_j),j), r} } \left( c_8 - c_9 \right) \sqrt{ \frac{\log \log n^{(r)}}{n^{(r)}}}  \nonumber  \\
		>& \Bigg(  - \frac{c_9}{  \hat{\sigma}^2_{i^*(\x_j),r}(\boldsymbol{\mathrm{x}}_{j}) } -  \sum_{\begin{subarray}{c}
				i \ne i^{*} (\x_{j}) \\
				i \ne i_1
		\end{subarray}} \frac{ 1 }{ \hat{\sigma}^2_{i,r}(\boldsymbol{\mathrm{x}}_{j}) }  \frac{\hat{n}^2_{ (i,j), r}}{ \hat{n}^2_{ (i^*(\x_j),j), r} } \left( c_9 + h(c_2,c_4,c_5) + c_{10} \right)\nonumber \\
		&+\frac{ 1 }{ \hat{\sigma}^2_{i_1,r}(\boldsymbol{\mathrm{x}}_{j}) }  \frac{\hat{n}^2_{ (i_1,j), r}}{ \hat{n}^2_{ (i^*(\x_j),j), r} } \left( c_8 - c_9 \right) \Bigg) \sqrt{ \frac{\log \log n^{(r)}}{n^{(r)}}}, \label{ineq::rs2}
	\end{align}
	where \eqref{ineq::rs1} holds because of \eqref{ineq::contrad1}, \eqref{ineq::bestbd}, and \eqref{ineq::varbd}, $c_{10}$ is a small positive constant, and \eqref{ineq::rs2} holds because of \eqref{ineq::iteratr}.
	Then $c_2$, $c_3$, $c_4$, $c_5$, $c_6$, $c_7$,  $c_9$, $c_{10}$ are all positive constants independent of $c_1$ and $r$, $c_8 = \frac{c_1 - c_6}{c_3} - c_7$ is an increasing function of $c_1$, and $\frac{\hat{n}^2_{ (i,j), r}}{ \hat{n}^2_{ (i^*(\x_j),j), r}} = \Theta(1)$ for all treatment $i$ by Lemma \ref{lem::order1}. Therefore, no matter what the value of $r$ is, we can set $c_1$ large enough such that the lower bound \eqref{ineq::rs2} is strictly greater than 0, which means $(i^{(r+s_1)},j^{(r+s_1)}) = (i^*(\x_j),j)$ is impossible.
\end{proof}

\begin{remark}
	In Lemma \ref{lem::supple}, by definition, treatment $i_1$ under context $\x_j$ can receive at most one replication between iteration $r$ and iteration $r+s-1$. We aim to show that if treatment $i^*(\x_j)$ under context $\x_j$ receives too many replications (more than $c_1\sqrt{n^{(r)} \log \log n^{(r)}}$), there should be a non-best treatment under context $\x_j$ which also receives many replications.
	
	The proofs of Lemmas \ref{lem::supple} and \ref{lem::bdopttreat} frequently use the following tricks (the values of constant $c$ and $c'$ could change among different items in this remark).
	\begin{itemize}
		\item $\sqrt{ \log \log n^{(r)} / n^{(r)} }$ converges to zero much more slowly than $\log \log n^{(r)} / n^{(r)}$, so we can find a constant $c>0$ such that $c \sqrt{ \log \log n^{(r)} / n^{(r)} } > \log \log n^{(r)} / n^{(r)}$.
		\item $\left( 1+c \sqrt{ \frac{\log \log n^{(r)}}{n^{(r)}} }  \right) \left( 1 \pm c' \sqrt{ \frac{\log \log n^{(r)}}{n^{(r)}} }  \right) = 1+(c \pm c') \sqrt{ \frac{\log \log n^{(r)}}{n^{(r)}} } \pm cc' \frac{\log \log n^{(r)}}{n^{(r)}} $. By the first trick in this remark, $\frac{\log \log n^{(r)}}{n^{(r)}}$ can be further (lower or upper) bounded by $\sqrt{ \log \log n^{(r)} / n^{(r)} }$. This trick is used for many times, e.g., \eqref{ineq::contrad1}, the inequality below \eqref{ineq::rs1}, and \eqref{ineq::i1s2}.
		\item If $\frac{ \hat{n}_{ (i,j), r+s} }{ \hat{n}_{ (i,j), r} } > 1+c \sqrt{ \frac{\log \log n^{(r)}}{n^{(r)}} }  $, then $\frac{ \hat{n}_{ (i,j), r} }{ \hat{n}_{ (i,j), r+s} } <  1-\frac{c}{2} \sqrt{ \frac{\log \log n^{(r)}}{n^{(r)}} }  $ when $r$ is large enough. This trick is used in \eqref{ineq::ss}.
		\item $\frac{ \hat{n}^2_{ (i,j), r} }{  (\hat{n}_{ (i,j), r} - 1 )^2 } =  \left( 1 + \frac{ 1 }{  \hat{n}_{ (i,j), r} - 1  } \right)^2 \le \left( 1 + \frac{ 1 }{  n^{(r)}  } \right)^2 =  1 + \frac{ 2 }{  n^{(r)} } +  \frac{ 1 }{  (n^{(r)})^2 }  \le  1+ c \sqrt{ \frac{\log \log n^{(r)}}{n^{(r)}} }  $. This is because $\sqrt{ \log \log n^{(r)} / n^{(r)} }$ converges to zero much more slowly than $\frac{ 2 }{  n^{(r)} } +  \frac{ 1 }{  (n^{(r)})^2 }$. This trick is used for many times, e.g., the inequality above \eqref{ineq::bestbd}.
		\item $\left( 1+c \sqrt{ \frac{\log \log n^{(r)}}{n^{(r)}} }  \right) \left( 1-c' \frac{1}{n^{(r)}}   \right) = \left( 1+(1-c'')c \sqrt{ \frac{\log \log n^{(r)}}{n^{(r)}} }  \right)$ where $0 < c'< c'' < 1$ is small. This trick is used in \eqref{ineq::s2} and \eqref{ineq::ss2}.
	\end{itemize}
\end{remark}

\begin{lemma}\label{lem::bdopttreat}
	For any context $\x_j$ and a non-best treatment $i_1$ of $\x_j$, suppose
	{\small
		\begin{align*}
			(i_1,j) = \mathop{\arg\min}_{j\in\{1,...,m\},i\in\{1,...,k\}\setminus\{i^*(\x_j)\}}\hat{\Vcal}_{(i,j),r} \text{ and }
			s = \mathop{\arg\min}_l \left\{ l > 0: (i_1,j) = \mathop{\arg\min}_{j\in\{1,...,m\},i\in\{1,...,k\}\setminus\{i^*(\x_j)\}}\hat{\Vcal}_{(i,j),r+l} \right\}.
	\end{align*}}
	There exists $c_1 > 0$ independent of $r$ such that $\hat{n}_{ (i^*(\x_j),j), r+s} - \hat{n}_{ (i^*(\x_j),j), r} < c_1 \sqrt{n^{(r)} \log \log n^{(r)}}$
	almost surely.
\end{lemma}
\begin{proof}
	We prove this lemma by contradiction. Suppose for any $c_1$, there exists a large enough $r$ such that $\hat{n}_{ (i^*(\x_j),j), r+s} - \hat{n}_{ (i^*(\x_j),j), r} > c_1 \sqrt{n^{(r)} \log \log n^{(r)}}$. By Lemma \ref{lem::supple}, for any large constant $c_2$, we can find the $r$ and $c_1$ satisfying $\hat{n}_{ (i^*(\x_j),j), r+s} - \hat{n}_{ (i^*(\x_j),j), r} > c_1 \sqrt{n^{(r)} \log \log n^{(r)}}$ such that there exists a non-best treatment $i_2$ of $\x_j$ satisfying
	\begin{align}\label{ineq::s}
		\frac{\hat{n}_{ (i_2,j), r+s}}{\hat{n}_{ (i^*(\x_j),j), r+s}} > \frac{\hat{n}_{ (i_2,j), r}}{\hat{n}_{ (i^*(\x_j),j), r}} \left( 1+c_2 \sqrt{ \frac{\log \log n^{(r)}}{n^{(r)}} }  \right).
	\end{align}
	Then
	$
	\hat{n}_{ (i_2,j), r+s} >\hat{n}_{ (i_2,j), r}  \left( 1+c_2 \sqrt{ \frac{\log \log n^{(r)}}{n^{(r)}} }  \right)
	$
	because $ \hat{n}_{ (i^*(\x_j),j), r+s} / \hat{n}_{ (i^*(\x_j),j), r}  > 1$,
	so
	\begin{align}\label{ineq::ss}
		\frac{ \hat{n}_{ (i_2,j), r} }{ \hat{n}_{ (i_2,j), r+s} } < \left( 1-\frac{c_2}{2} \sqrt{ \frac{\log \log n^{(r)}}{n^{(r)}} }  \right)  \triangleq \left( 1-c_3 \sqrt{ \frac{\log \log n^{(r)}}{n^{(r)}} }  \right).
	\end{align}
	
	Let $s_2 = \mathop{\arg\max}_l \left\{ l < s: (i^{(r+l)},j^{(r+l)}) = (i_2,j) \right\}$. Then $\hat{n}_{ (i_2,j), r+s} = \hat{n}_{ (i_2,j), r+s_2}+1$. We will show that $(i^{(r+s_2)},j^{(r+s_2)}) = (i_2,j)$ is impossible when $c_2$ and $c_1$ are large enough (their values are to be decided) and thus reach a contradiction.
	
	Since $\hat{n}_{ (i^*(\x_j),j), r+s} = O(\hat{n}_{ (i^*(\x_j),j), r})$, we have
	\begin{align}
		&\frac{\hat{n}_{ (i_2,j), r+s_2}}{\hat{n}_{ (i^*(\x_j),j), r+s_2}} > \frac{\hat{n}_{ (i_2,j), r+s}-1}{\hat{n}_{ (i^*(\x_j),j), r+s}}  > \frac{\hat{n}_{ (i_2,j), r}}{\hat{n}_{ (i^*(\x_j),j), r}} \left( 1+(1-c_4)c_2 \sqrt{ \frac{\log \log n^{(r)}}{n^{(r)}} }  \right), \label{ineq::s2}  \\
		&\frac{ \hat{n}_{ (i_2,j), r} }{ \hat{n}_{ (i_2,j), r+s_2} } = \frac{ \hat{n}_{ (i_2,j), r} }{ \hat{n}_{ (i_2,j), r+s} -1} < \left( 1-(1-c_4)c_3 \sqrt{ \frac{\log \log n^{(r)}}{n^{(r)}} }  \right),  \label{ineq::ss2}
	\end{align}
	where $c_4 < 1$ is a small positive constant independent of $c_2$, \eqref{ineq::s2} holds due to \eqref{ineq::s}, and \eqref{ineq::ss2} holds due to \eqref{ineq::ss}. Again, by the law of the iterated logarithm, for $0 \le l \le s$, $i,i' \ne i^*(\x_j)$,
	\begin{align*}
		&\left| \frac{\hat{\delta}_{(i,j),r+l}}{ \hat{\delta}_{(i,j),r} } - 1\right| = \left| \left( \frac{ \bar{Y}_{i,r+l}(\x_j) - \bar{Y}_{i^*(\x_j),r+l } (\x_j)}{\bar{Y}_{i,r}(\x_j) - \bar{Y}_{i^*(\x_j),r} (\x_j) } \right)^2 - 1\right| < c_5 \sqrt{ \frac{\log \log n^{(r)}}{n^{(r)}} },  \\
		& \left| \frac{\hat{\delta}_{(i,j),r+l} - \hat{\delta}_{(i',j),r+l} }{ \hat{\delta}_{(i,j),r} - \hat{\delta}_{(i',j),r} } - 1\right|  < c_5 \sqrt{ \frac{\log \log n^{(r)}}{n^{(r)}} }, \ \ \left| \frac{ \hat{\sigma}^2_{i'',r+l}(\boldsymbol{\mathrm{x}}_{j}) }{\hat{\sigma}^2_{i'',r}(\boldsymbol{\mathrm{x}}_{j})} - 1\right| < c_5 \sqrt{ \frac{\log \log n^{(r)}}{n^{(r)}} },
	\end{align*}
	where we use an uniform bound to avoid the parameter redundancy. Since $(i^{(r)},j^{(r)}) = (i_1,j)$, $\hat{\tau}_{(i_1,j),r} < \hat{\tau}_{(i_2,j),r}$, which means
	\begin{align}\label{ineq::vr}
		\hat{\delta}_{(i_1,j),r} \left( \frac{ \hat{\sigma}^2_{i^*(\x_j),r}(\boldsymbol{\mathrm{x}}_{j}) }{\hat{n}_{ (i^*(\x_j),j), r}} + \frac{ \hat{\sigma}^2_{i_2,r}(\boldsymbol{\mathrm{x}}_{j}) }{\hat{n}_{ (i_2,j), r}} \right) < \hat{\delta}_{(i_2,j),r} \left( \frac{ \hat{\sigma}^2_{i^*(\x_j),r}(\boldsymbol{\mathrm{x}}_{j}) }{\hat{n}_{ (i^*(\x_j),j), r}} + \frac{ \hat{\sigma}^2_{i_1,r}(\boldsymbol{\mathrm{x}}_{j}) }{\hat{n}_{ (i_1,j), r}} \right).
	\end{align}
	At the iteration $r+s_2$,
	\begin{align*}
		\Lambda \triangleq&\hat{\delta}_{(i_1,j),r+s_2} \left( \frac{ \hat{\sigma}^2_{i^*(\x_j),r+s_2}(\boldsymbol{\mathrm{x}}_{j}) }{\hat{n}_{ (i^*(\x_j),j), r+s_2}} + \frac{ \hat{\sigma}^2_{i_2,r+s_2}(\boldsymbol{\mathrm{x}}_{j}) }{\hat{n}_{ (i_2,j), r+s_2}} \right) - \hat{\delta}_{(i_2,j),r+s_2} \left( \frac{ \hat{\sigma}^2_{i^*(\x_j),r+s_2}(\boldsymbol{\mathrm{x}}_{j}) }{\hat{n}_{ (i^*(\x_j),j), r+s_2}} + \frac{ \hat{\sigma}^2_{i_1,r+s_2}(\boldsymbol{\mathrm{x}}_{j}) }{\hat{n}_{ (i_1,j), r+s_2}} \right).
	\end{align*}
	We discuss the value of $\Lambda$ by two disjoint and collectively exhaustive cases.
	
	\textbf{CASE 1 $\boldsymbol{(y_{i_1}(\x_j) \ge y_{i_2}(\x_j))}$:} $\hat{\delta}_{(i_1,j),r} - \hat{\delta}_{(i_2,j),r} \ge 0$ for $r$ large enough. Then,
	\begin{align}
		\Lambda <&  \hat{\delta}_{(i_1,j),r+s_2}  \frac{ \hat{\sigma}^2_{i_2,r+s_2}(\boldsymbol{\mathrm{x}}_{j}) }{\hat{n}_{ (i_2,j), r+s_2}}  - \hat{\delta}_{(i_2,j),r+s_2} \frac{ \hat{\sigma}^2_{i_1,r+s_2}(\boldsymbol{\mathrm{x}}_{j}) }{\hat{n}_{ (i_1,j), r}+1} + \left(\hat{\delta}_{(i_1,j),r+s_2} - \hat{\delta}_{(i_2,j),r+s_2} \right)  \frac{ \hat{\sigma}^2_{i^*(\x_j),r+s_2}(\boldsymbol{\mathrm{x}}_{j}) }{\hat{n}_{ (i^*(\x_j),j), r}}  \nonumber  \\
		<& \hat{\delta}_{(i_1,j),r}  \frac{ \hat{\sigma}^2_{i_2,r}(\boldsymbol{\mathrm{x}}_{j}) }{\hat{n}_{ (i_2,j), r}} \left( 1+(2c_5+c_6) \sqrt{ \frac{\log \log n^{(r)}}{n^{(r)}} } \right) \left( 1- (1-c_4)c_3 \sqrt{ \frac{\log \log n^{(r)}}{n^{(r)}} } \right)   \label{ineq::applyss2}\\
		& - \hat{\delta}_{(i_2,j),r} \frac{ \hat{\sigma}^2_{i_1,r}(\boldsymbol{\mathrm{x}}_{j}) }{\hat{n}_{ (i_1,j), r}} \left( 1-(2c_5+c_7) \sqrt{ \frac{\log \log n^{(r)}}{n^{(r)}} } \right) \nonumber\\
		&+ \left(\hat{\delta}_{(i_1,j),r} - \hat{\delta}_{(i_2,j),r} \right)  \frac{ \hat{\sigma}^2_{i^*(\x_j),r}(\boldsymbol{\mathrm{x}}_{j}) }{\hat{n}_{ (i^*(\x_j),j), r}} \left( 1+(2c_5+c_6) \sqrt{ \frac{\log \log n^{(r)}}{n^{(r)}} } \right)   \nonumber  \\
		<&\hat{\delta}_{(i_1,j),r}  \frac{ \hat{\sigma}^2_{i_2,r}(\boldsymbol{\mathrm{x}}_{j}) }{\hat{n}_{ (i_2,j), r}} - \hat{\delta}_{(i_2,j),r} \frac{ \hat{\sigma}^2_{i_1,r}(\boldsymbol{\mathrm{x}}_{j}) }{\hat{n}_{ (i_1,j), r}} + \left(\hat{\delta}_{(i_1,j),r} - \hat{\delta}_{(i_2,j),r} \right)  \frac{ \hat{\sigma}^2_{i^*(\x_j),r}(\boldsymbol{\mathrm{x}}_{j}) }{\hat{n}_{ (i^*(\x_j),j), r}}  \nonumber  \\
		& + \left(c^2_{10} \frac{2c_5+c_6}{c_8} + c^2_{10} \frac{2c_5+c_7}{c_8} + c^2_{10} \frac{2c_5+c_6}{c_8} - c_9^2 (1-c_4)c_3 \right) \frac{1}{n^{(r)}}\sqrt{ \frac{\log \log n^{(r)}}{n^{(r)}} } \label{ineq::bdtriv}  \\
		<& \left(c^2_{10} \frac{2c_5+c_6}{c_8} + c^2_{10} \frac{2c_5+c_7}{c_8} + c^2_{10} \frac{2c_5+c_6}{c_8} - c_9^2 (1-c_4)c_3 \right) \frac{1}{n^{(r)}}\sqrt{ \frac{\log \log n^{(r)}}{n^{(r)}} }, \label{ineq::bdr}
	\end{align}
	where $c_6$ and $c_7$ are small positive constants and \eqref{ineq::applyss2} holds because of \eqref{ineq::ss2}. According to Lemma \ref{lem::order1}, we set $\hat{n}_{ (i,j), r} > c_8 n^{(r)}$ for all $i=1,\dots,k$. $c_{9}>0$ is an uniform lower bound of $\hat{\delta}_{(i_1,j),r}$ and  $ \hat{\sigma}^2_{i_2,r}(\boldsymbol{\mathrm{x}}_{j})$ in \eqref{ineq::bdtriv}.  $c_{10}$ is an uniform upper bound of $\hat{\delta}_{(i,j),r}$, $ \hat{\sigma}^2_{i,r}(\boldsymbol{\mathrm{x}}_{j})$, $i=1,\dots,k$, and    $\hat{\delta}_{(i_1,j),r} - \hat{\delta}_{(i_2,j),r} $ in \eqref{ineq::bdtriv}. Inequality \eqref{ineq::bdr} holds because of \eqref{ineq::vr}. Since $c_4$, $c_5$, $c_6$, $c_7$, $c_8$, $c_9$, $c_{10}$ are positive constants independent of $c_2$, the upper bound \eqref{ineq::bdr} is strictly smaller than 0 when $c_3=c_2/2$ is large enough, suggesting $\hat{\tau}_{(i_1,j),r+s_2} < \hat{\tau}_{(i_2,j),r+s_2}$. Therefore $(i^{(r+s_2)},j^{(r+s_2)}) = (i_2,j)$ is impossible when $c_1$ is large enough, contradicting the definition of $s_2$.
	
	\textbf{CASE 2 $\boldsymbol{(y_{i_1}(\x_j) < y_{i_2}(\x_j))}$:} $\hat{\delta}_{(i_1,j),r} - \hat{\delta}_{(i_2,j),r} < 0$ for $r$ large enough. We need to further bound some quantities. First,
	$
	\frac{ \hat{n}_{ (i^*(\x_j),j), r+s_2} }{\hat{n}_{ (i_1,j), r+s_2}} > \frac{ \hat{n}_{ (i^*(\x_j),j), r} }{\hat{n}_{ (i_1,j), r}+1} > \frac{ \hat{n}_{ (i^*(\x_j),j), r} }{\hat{n}_{ (i_1,j), r}} \left( 1-c_{11} \sqrt{ \frac{\log \log n^{(r)}}{n^{(r)}} } \right),
	$
	where $c_{11}$ is a positive constant such that $c_{11} \sqrt{ \frac{\log \log n^{(r)}}{n^{(r)}} } > \frac{1}{ \hat{n}_{ (i_1,j), r}+1 } $. Then, combining \eqref{ineq::s2},
	\begin{align}
		&\frac{ \hat{n}_{ (i_2,j), r+s_2} }{\hat{n}_{ (i_1,j), r+s_2}} = \frac{ \hat{n}_{ (i_2,j), r+s_2} }{\hat{n}_{ (i^*(\x_j),j), r+s_2}} \frac{ \hat{n}_{ (i^*(\x_j),j), r+s_2} }{\hat{n}_{ (i_1,j), r+s_2}}  \nonumber  \\
		&\phantom{nnn}> \frac{\hat{n}_{ (i_2,j), r}}{\hat{n}_{ (i^*(\x_j),j), r}} \left( 1+(1-c_4)c_2 \sqrt{ \frac{\log \log n^{(r)}}{n^{(r)}} }  \right)  \frac{ \hat{n}_{ (i^*(\x_j),j), r} }{\hat{n}_{ (i_1,j), r}} \left( 1-c_{11} \sqrt{ \frac{\log \log n^{(r)}}{n^{(r)}} } \right)  \nonumber  \\
		&\phantom{nnn}> \frac{\hat{n}_{ (i_2,j), r}}{\hat{n}_{ (i_1,j), r}} \left( 1+(1-c_{12})c_2  \sqrt{ \frac{\log \log n^{(r)}}{n^{(r)}} } \right), \label{ineq::i1s2}
	\end{align}
	where $c_{4} < c_{12} < 1 $ is a small positive constant independent of $c_2$. Thus,
	\begin{align*}
		&\hat{n}_{ (i_2,j), r+s_2} \Lambda < \hat{\delta}_{(i_1,j),r+s_2}  \hat{\sigma}^2_{i_2,r+s_2}(\boldsymbol{\mathrm{x}}_{j})   - \hat{\delta}_{(i_2,j),r+s_2} \hat{\sigma}^2_{i_1,r+s_2}(\boldsymbol{\mathrm{x}}_{j}) \frac{ \hat{n}_{ (i_2,j), r+s_2} }{\hat{n}_{ (i_1,j), r+s_2}}  \nonumber  \\
		&\phantom{\hat{n}_{ (i_2,j), r+s_2} \Lambda <} + \left(\hat{\delta}_{(i_1,j),r+s_2} - \hat{\delta}_{(i_2,j),r+s_2} \right)  \hat{\sigma}^2_{i^*(\x_j),r+s_2}(\boldsymbol{\mathrm{x}}_{j}) \frac{ \hat{n}_{ (i_2,j), r+s_2} }{\hat{n}_{ (i^*(\x_j),j), r+s_2}}  \nonumber \\
		<& \hat{\delta}_{(i_1,j),r}  \hat{\sigma}^2_{i_2,r}(\boldsymbol{\mathrm{x}}_{j}) \left( 1+(2c_5+c_6) \sqrt{ \frac{\log \log n^{(r)}}{n^{(r)}} } \right) \nonumber \\
		&- \hat{\delta}_{(i_2,j),r} \hat{\sigma}^2_{i_1,r}(\boldsymbol{\mathrm{x}}_{j}) \frac{\hat{n}_{ (i_2,j), r}}{\hat{n}_{ (i_1,j), r}} \left( 1-2c_5 \sqrt{ \frac{\log \log n^{(r)}}{n^{(r)}} } \right) \left( 1+(1-c_{12})c_2 \sqrt{ \frac{\log \log n^{(r)}}{n^{(r)}} } \right)  \nonumber  \\
		&+ \left(\hat{\delta}_{(i_1,j),r} - \hat{\delta}_{(i_2,j),r} \right)  \hat{\sigma}^2_{i^*(\x_j),r}(\boldsymbol{\mathrm{x}}_{j}) \frac{\hat{n}_{ (i_2,j), r}}{\hat{n}_{ (i^*(\x_j),j), r}} \left( 1-2c_5 \sqrt{ \frac{\log \log n^{(r)}}{n^{(r)}} } \right) \left( 1+(1-c_4)c_2 \sqrt{ \frac{\log \log n^{(r)}}{n^{(r)}} }  \right)  \nonumber  \\
		<& \hat{\delta}_{(i_1,j),r}  \hat{\sigma}^2_{i_2,r}(\boldsymbol{\mathrm{x}}_{j}) - \hat{\delta}_{(i_2,j),r} \hat{\sigma}^2_{i_1,r}(\boldsymbol{\mathrm{x}}_{j}) \frac{\hat{n}_{ (i_2,j), r}}{\hat{n}_{ (i_1,j), r}} + \left(\hat{\delta}_{(i_1,j),r} - \hat{\delta}_{(i_2,j),r} \right)  \hat{\sigma}^2_{i^*(\x_j),r}(\boldsymbol{\mathrm{x}}_{j}) \frac{\hat{n}_{ (i_2,j), r}}{\hat{n}_{ (i^*(\x_j),j), r}}  \nonumber  \\
		&+ \left( c^2_{10}(2c_5+c_6) -c_{13}  (1-c_{14})c_2  - c_{15} (1-c_{16})c_2  \right)  \sqrt{ \frac{\log \log n^{(r)}}{n^{(r)}} }  \nonumber  \\
		<& \left( c^2_{10}(2c_5+c_6) -c_{13}  (1-c_{14})c_2  - c_{15} (1-c_{16})c_2  \right)  \sqrt{ \frac{\log \log n^{(r)}}{n^{(r)}} },
	\end{align*}
	where $c_{13}$, $c_{12}<c_{14}<1$, $c_{15}$, and $c_{4} < c_{16} <1$ are positive constants independent of $c_2$. We can set $c_2$ large enough such that the bound of $\hat{n}_{ (i_2,j), r+s_2} \Lambda$  above is strictly smaller than 0. Then we again conclude that $(i^{(r+s_2)},j^{(r+s_2)}) = (i_2,j)$ is impossible when $c_2$ is large enough, contradicting the definition of $s_2$.
\end{proof}

\begin{proposition}\label{propos2}
	For $\hat{\alpha}_{i,j}$ generated by CR$\&$S Algorithm 1, $i=1,2,...,k$ and $j=1,2,...,m$, we have
	\begin{align*}
		&\Bigg| \frac{(y_{i} (\x_{j})-y_{i^*(\x_{j})} (\x_{j}))^2} {\sigma_{i^*(\x_{j})}^2(\x_{j})/\hat{\alpha}_{i^*(\x_{j}),j}+\sigma_{i}^2(\x_{j})/\hat{\alpha}_{i,j}}-
		\frac{(y_{i'} (\x_{j'})-y_{i^*(\x_{j'})} (\x_{j'}))^2} {\sigma_{i^*(\x_{j'})}^2(\x_{j'})/\hat{\alpha}_{i^*(\x_{j'}),j'}+\sigma_{i'}^2(\x_{j'})/\hat{\alpha}_{i',j'}} \Bigg|
		\leq O\left(\sqrt{ \frac{ \log \log n^{(r)}}{n^{(r)}} }\right),
	\end{align*}
	almost surely, for any $j,j' = 1,2,\cdots,m$, $i,i' = 1,2,\cdots,k$, $i \ne i^*(\boldsymbol{\mathrm{x}}_{j})$ and $i' \ne i^*(\boldsymbol{\mathrm{x}}_{j'})$.
\end{proposition}
\begin{proof}
	Fix a sample path and let $\{ r_l^{(i,j)}, l=1,2,\dots \}$ be the subsequence of all iterations that
	$(i,j) = \mathop{\arg\min}_{j\in\{1,2,...,m\},i\in\{1,...,k\}\setminus\{i^*(\x_j)\}}\hat{\Vcal}_{(i,j),r_l^{(i,j)}}.$
	Let $r_1 = r_{l_1}^{(i,j)}$ where $l_1 \ge 1$ is any large enough integer and $r_2 = r_{l_1+1}^{(i,j)}$. By Lemma \ref{lem::bdopttreat}, we have $\hat{n}_{ (i^*(\x_j),j), r_2} - \hat{n}_{ (i^*(\x_j),j), r_1} = O (\sqrt{n^{(r_1)} \log \log n^{(r_1)}})$. Therefore, $\hat{n}_{ (i^*(\x_j),j), r_2} = \hat{n}_{ (i^*(\x_j),j), r_1} (1+O(\sqrt{ \log \log n^{(r_1)} / n^{(r_1)} }))$.  Since $\hat{n}_{ (i,j), r_2} \le \hat{n}_{ (i,j), r_1} + 1$, $\hat{n}_{ (i,j), r_1} = \Theta(n^{(r_1)}) $, and $\hat{n}_{ (i,j), r_2} = \Theta(n^{(r_2)}) $, we have $n^{(r)} = O(n^{(r_1)})$ for all $r_1 \le r \le r_2$. By the law of the iterated logarithm, for $i=1,\dots,k$ and $j=1,\dots,m$,
	$
	\left| \hat{\delta}_{(i,j),r}/\hat{\delta}_{(i,j),r_1} - 1 \right| \le O\left(\sqrt{\frac{\log \log n^{(r_1)}}{n^{(r_1)}}}\right), \ \left| \hat{\sigma}^2_{i,r}(\boldsymbol{\mathrm{x}}_{j}) / \hat{\sigma}^2_{i,r_1}(\boldsymbol{\mathrm{x}}_{j}) - 1 \right| \le  O\left(\sqrt{\frac{\log \log n^{(r_1)}}{n^{(r_1)}}}\right).
	$
	Let $(i',j') \ne (i,j)$. For $r_1 \le r \le r_2$,
	\begin{align}
		&\hat{\tau}_{(i,j),r}-\hat{\tau}_{(i',j'),r} = \frac{ \hat{\delta}_{(i,j),r} }{\frac{ \hat{\sigma}^2_{i^*(\x_{j}),r}(\boldsymbol{\mathrm{x}}_{j}) }{\hat{n}_{ (i^*(\x_{j}),j), r}} + \frac{ \hat{\sigma}^2_{i,r}(\boldsymbol{\mathrm{x}}_{j}) }{\hat{n}_{ (i,j), r}} } - \frac{\hat{\delta}_{(i',j'),r}}{ \frac{ \hat{\sigma}^2_{i^*(\x_{j'}),r}(\boldsymbol{\mathrm{x}}_{j'}) }{\hat{n}_{ (i^*(\x_{j'}),j'), r}} + \frac{ \hat{\sigma}^2_{i',r}(\boldsymbol{\mathrm{x}}_{j'}) }{\hat{n}_{ (i',j'), r}} }  \nonumber  \\
		&\le \hat{\delta}_{(i,j),r} \left( \frac{ \hat{\sigma}^2_{i^*(\x_{j}),r}(\boldsymbol{\mathrm{x}}_{j}) }{\hat{n}_{ (i^*(\x_{j}),j), r_2}} + \frac{ \hat{\sigma}^2_{i,r}(\boldsymbol{\mathrm{x}}_{j}) }{\hat{n}_{ (i,j), r_2}} \right)^{-1} - \hat{\delta}_{(i',j'),r} \left( \frac{ \hat{\sigma}^2_{i^*(\x_{j'}),r}(\boldsymbol{\mathrm{x}}_{j'}) }{\hat{n}_{ (i^*(\x_{j'}),j'), r_1}} + \frac{ \hat{\sigma}^2_{i',r}(\boldsymbol{\mathrm{x}}_{j'}) }{\hat{n}_{ (i',j'), r_1}} \right)^{-1}  \nonumber  \\
		&\le \frac{ \hat{\delta}_{(i,j),r_1} \left( 1+O\left(\sqrt{\frac{\log \log n^{(r_1)}}{n^{(r_1)}}}\right) \right) }{  \frac{ \hat{\sigma}^2_{i^*(\x_{j}),r_1}(\boldsymbol{\mathrm{x}}_{j}) }{\hat{n}_{ (i^*(\x_{j}),j), r_1}} \frac{ 1- O\left(\sqrt{\frac{\log \log n^{(r_1)}}{n^{(r_1)}}}\right) }{ 1+ O\left(\sqrt{\frac{\log \log n^{(r_1)}}{n^{(r_1)}}}\right) } + \frac{ \hat{\sigma}^2_{i,r_1}(\boldsymbol{\mathrm{x}}_{j}) }{\hat{n}_{ (i,j), r_1}} \frac{ 1- O\left(\sqrt{\frac{\log \log n^{(r_1)}}{n^{(r_1)}}}\right) }{ 1+ O\left(\frac{1}{n^{(r_1)}}\right) } }  \nonumber  \\
		&\phantom{=} - \frac{ \hat{\delta}_{(i',j'),r_1} \left( 1- O\left(\sqrt{\frac{\log \log n^{(r_1)}}{n^{(r_1)}}}\right) \right) }{  \frac{ \hat{\sigma}^2_{i^*(\x_{j'}),r_1}(\boldsymbol{\mathrm{x}}_{j'}) }{\hat{n}_{ (i^*(\x_{j'}),j'), r_1}} \left( 1+ O\left(\sqrt{\frac{\log \log n^{(r_1)}}{n^{(r_1)}}}\right) \right) + \frac{ \hat{\sigma}^2_{i',r_1}(\boldsymbol{\mathrm{x}}_{j'}) }{\hat{n}_{ (i',j'), r_1}} \left( 1+ O\left(\sqrt{\frac{\log \log n^{(r_1)}}{n^{(r_1)}}}\right) \right) }  \nonumber  \\
		&\le \frac{ \hat{\delta}_{(i,j),r_1} \left( 1+O\left(\sqrt{\frac{\log \log n^{(r_1)}}{n^{(r_1)}}}\right) \right) }{  \frac{ \hat{\sigma}^2_{i^*(\x_{j}),r_1}(\boldsymbol{\mathrm{x}}_{j}) }{\hat{n}_{ (i^*(\x_{j}),j), r_1}}  \left( 1- O\left(\sqrt{\frac{\log \log n^{(r_1)}}{n^{(r_1)}}}\right) \right)  + \frac{ \hat{\sigma}^2_{i,r_1}(\boldsymbol{\mathrm{x}}_{j}) }{\hat{n}_{ (i,j), r_1}}  \left( 1- O\left(\sqrt{\frac{\log \log n^{(r_1)}}{n^{(r_1)}}}\right) \right)  }  \nonumber  \\
		&\phantom{=}  - \frac{ \hat{\delta}_{(i',j'),r_1} \left( 1- O\left(\sqrt{\frac{\log \log n^{(r_1)}}{n^{(r_1)}}}\right) \right) }{  \frac{ \hat{\sigma}^2_{i^*(\x_{j'}),r_1}(\boldsymbol{\mathrm{x}}_{j'}) }{\hat{n}_{ (i^*(\x_{j'}),j'), r_1}} \left( 1+ O\left(\sqrt{\frac{\log \log n^{(r_1)}}{n^{(r_1)}}}\right) \right) + \frac{ \hat{\sigma}^2_{i',r_1}(\boldsymbol{\mathrm{x}}_{j'}) }{\hat{n}_{ (i',j'), r_1}} \left( 1+ O\left(\sqrt{\frac{\log \log n^{(r_1)}}{n^{(r_1)}}}\right) \right) }  \nonumber \\
		&\le \frac{ \hat{\delta}_{(i,j),r_1}  }{  \frac{ \hat{\sigma}^2_{i^*(\x_{j}),r_1}(\boldsymbol{\mathrm{x}}_{j}) }{\hat{n}_{ (i^*(\x_{j}),j), r_1}} + \frac{ \hat{\sigma}^2_{i,r_1}(\boldsymbol{\mathrm{x}}_{j}) }{\hat{n}_{ (i,j), r_1}} }  \left( 1+O\left(\sqrt{\frac{\log \log n^{(r_1)}}{n^{(r_1)}}}\right) \right) \nonumber  \\
		&\phantom{=} - \frac{ \hat{\delta}_{(i',j'),r_1} }{  \frac{ \hat{\sigma}^2_{i^*(\x_{j'}),r_1}(\boldsymbol{\mathrm{x}}_{j'}) }{\hat{n}_{ (i^*(\x_{j'}),j'), r_1}} + \frac{ \hat{\sigma}^2_{i',r_1}(\boldsymbol{\mathrm{x}}_{j'}) }{\hat{n}_{ (i',j'), r_1}}  } \left( 1- O\left(\sqrt{\frac{\log \log n^{(r_1)}}{n^{(r_1)}}}\right) \right) \nonumber \\
		&= \frac{ \hat{\delta}_{(i,j),r_1}  }{  \frac{ \hat{\sigma}^2_{i^*(\x_{j}),r_1}(\boldsymbol{\mathrm{x}}_{j}) }{\hat{n}_{ (i^*(\x_{j}),j), r_1}} + \frac{ \hat{\sigma}^2_{i,r_1}(\boldsymbol{\mathrm{x}}_{j}) }{\hat{n}_{ (i,j), r_1}} } - \frac{ \hat{\delta}_{(i',j'),r_1} }{  \frac{ \hat{\sigma}^2_{i^*(\x_{j'}),r_1}(\boldsymbol{\mathrm{x}}_{j'}) }{\hat{n}_{ (i^*(\x_{j'}),j'), r_1}} + \frac{ \hat{\sigma}^2_{i',r_1}(\boldsymbol{\mathrm{x}}_{j'}) }{\hat{n}_{ (i',j'), r_1}}  } + O\left(\sqrt{n^{(r_1)} \log \log n^{(r_1)}}\right)  \nonumber \\
		&< O\left(\sqrt{n^{(r_1)} \log \log n^{(r_1)}}\right), \nonumber
	\end{align}
	where the last inequality holds because of the definition of $r_1$. Then,
	$
	\mathcal{\hat{V}}_{(i,j),r} - \mathcal{\hat{V}}_{(i',j'),r} = \frac{1}{n^{(r)}} ( \hat{\tau}_{(i,j),r}$ $-\hat{\tau}_{(i',j'),r} ) \le  O\left(\sqrt{\frac{\log \log n^{(r)}}{n^{(r)}}}\right)
	$
	because $n^{(r_1)} = \Theta(n^{(r)})$. By symmetry, we would have $\mathcal{\hat{V}}_{(i',j'),r} - \mathcal{\hat{V}}_{(i,j),r} \le  O\left(\sqrt{\frac{\log \log n^{(r)}}{n^{(r)}}}\right)$. Therefore, applying the law of the iterated logarithm,
	\begin{align*}
		&\Bigg| \frac{(y_{i} (\x_{j})-y_{i^*(\x_{j})} (\x_{j}))^2} {\sigma_{i^*(\x_{j})}^2(\x_{j})/\hat{\alpha}_{(i^*(\x_{j}),j),r}+\sigma_{i}^2(\x_{j})/\hat{\alpha}_{(i,j),r}}-
		\frac{(y_{i'} (\x_{j'})-y_{i^*(\x_{j'})} (\x_{j'}))^2} {\sigma_{i^*(\x_{j'})}^2(\x_{j'})/\hat{\alpha}_{(i^*(\x_{j'}),j'),r}+\sigma_{i'}^2(\x_{j'})/\hat{\alpha}_{(i',j'),r}} \Bigg|  \\
		\le& \left| \frac{(y_{i} (\x_{j})-y_{i^*(\x_{j})} (\x_{j}))^2} {\sigma_{i^*(\x_{j})}^2(\x_{j})/\hat{\alpha}_{(i^*(\x_{j}),j),r}+\sigma_{i}^2(\x_{j})/\hat{\alpha}_{(i,j),r}} - \mathcal{\hat{V}}_{(i,j),r} \right| + \left| \mathcal{\hat{V}}_{(i,j),r} -  \mathcal{\hat{V}}_{(i',j'),r} \right|  \\
		& + \left| \mathcal{\hat{V}}_{(i',j'),r} - \frac{(y_{i'} (\x_{j'})-y_{i^*(\x_{j'})} (\x_{j'}))^2} {\sigma_{i^*(\x_{j'})}^2(\x_{j'})/\hat{\alpha}_{(i^*(\x_{j'}),j'),r}+\sigma_{i'}^2(\x_{j'})/\hat{\alpha}_{(i',j'),r}} \right|
		\le O\left(\sqrt{\frac{\log \log n^{(r)}}{n^{(r)}}}\right).
	\end{align*}
\end{proof}

\subsection{Proof of Lemma 1}

Since $\bar{Y}^L_{i_1}(\x_h^\circ) = y_{i_1}(\x_h^\circ) + \frac{1}{n_{i_1,h}} \sum_{l=1}^{n_{i_1,h}} \epsilon_{i_1l} (\x_h^\circ) = \bbf(\x_h^\circ)^\top \bbeta_{i_1} + \frac{1}{n_{i_1,h}} \sum_{l=1}^{n_{i_1,h}} \epsilon_{i_1l} (\x_h^\circ)$,
we have
$
\widehat{\bbeta}_{i_1} = \left( \F^\top \F \right)^{-1} \F^\top \left( \F \bbeta_{i_1} + \overline{\bepsilon}_{i_1} \right) = \bbeta_{i_1} + \sum_{h=1}^{p} \left( \F^\top \F \right)^{-1} \bbf(\x_h^\circ) \overline{\epsilon}_{i_1}(\x_h^\circ),$ $i_1=i,i^*(\x).
$
Then the cumulant generating function of $\widehat{\bbeta}_i - \widehat{\bbeta}_{i^*(\x)}$ is
\begin{align*}
	&\Psi_n^{i,i^*(\x)}(\btheta) = \log \EE\left( e^{\btheta^\top (\widehat{\bbeta}_i - \widehat{\bbeta}_{i^*(\x)})} \right)  = \btheta^\top (\bbeta_i-\bbeta_{i^*(\x)}) + \log \EE\left( e^{\sum_{h=1}^{p} \btheta^\top \left( \F^\top \F \right)^{-1} \F^\top (\overline{\epsilon}_i(\x_h^\circ) - \overline{\epsilon}_{i^*(\x)}(\x_h^\circ) )} \right).
\end{align*}
Note that $\overline{\epsilon}_{i_1}(\x_h^\circ)$ is normally distributed as $\Ncal(0,\sigma_{i_1}^2(\x_h^\circ)/n_{i_1,h})$, $i_1=i,i^*(\x)$. Thus,
\begin{align}\label{eq:linmoment}
	\log \EE\left( e^{\pm \btheta^\top \left( \F^\top \F \right)^{-1}  \bbf(\x_h^\circ) \overline{\epsilon}_{i_1}(\x_h^\circ)} \right) =  \frac{\sigma_{i_1}^2(\x_h^\circ) \left( \btheta^\top \left( \F^\top \F \right)^{-1}  \bbf(\x_h^\circ) \right)^2}{2n_{i_1,h}}, \quad i_1=i,i^*(\x).
\end{align}	
Plugging \eqref{eq:linmoment} into the cumulant generating function $\Psi_n^{i,i^*(\x)}(\btheta)$, we have
$
\Psi_n^{i,i^*(\x)}(\btheta) = \btheta^\top (\bbeta_i-\bbeta_{i^*(\x)}) + \frac{1}{2n}   \btheta^\top \left( \F^\top \F \right)^{-1}  \F^\top \left( \bSigmap_{\epsilon,i} + \bSigmap_{\epsilon,i^*(\x)} \right) \F \left( \F^\top \F \right)^{-1}  \btheta.
$
Then, define	$\Psi^{i,i^*(\x)}(\btheta) \triangleq \lim\limits_{n \to \infty} \frac{1}{n} \Psi_n^{i,i^*(\x)}(n \btheta) $.
The Fenchel-Legendre transform of $\Psi^{i,i^*(\x)}(\btheta)$ is
\begin{equation*}
	\begin{aligned}
		&I^{i,i^*(\x)}(\boldsymbol{u}) = \sup_{\btheta} \left\{ \btheta^\top \boldsymbol{u} - \Psi^{i,i^*(\x)}(\btheta) \right\} \\
		&= \sup_{\btheta} \left\{ \btheta^\top \left(\boldsymbol{u} - (\bbeta_i-\bbeta_{i^*(\x)})\right) - \frac{1}{2}   \btheta^\top \left( \F^\top \F \right)^{-1}  \F^\top \left( \bSigmap_{\epsilon,i} + \bSigmap_{\epsilon,i^*(\x)} \right) \F \left( \F^\top \F \right)^{-1}  \btheta \right\} \\
		&= \frac{1}{2}\left(\boldsymbol{u} - (\bbeta_i-\bbeta_{i^*(\x)})\right)^\top \left( \left( \F^\top \F \right)^{-1}  \F^\top \left( \bSigmap_{\epsilon,i} + \bSigmap_{\epsilon,i^*(\x)} \right) \F \left( \F^\top \F \right)^{-1} \right)^{-1} \left(\boldsymbol{u} - (\bbeta_i-\bbeta_{i^*(\x)})\right)
	\end{aligned}
\end{equation*}
Let $A = \{ \boldsymbol{u}: \bbf(\x)^\top \boldsymbol{u} \le 0 \}$. By G\"artner-Ellis Theorem,
$
\lim\limits_{n \to \infty} \frac{1}{n} \log \PP \left( \bbf(\x)^\top (\hat{\bbeta}_i - \hat{\bbeta}_{i^*(\x)}) \le 0 \right)
=\lim\limits_{n \to \infty} \frac{1}{n} \log \PP \left( ( \hat{\bbeta}_i - \hat{\bbeta}_{i^*(\x)}) \in A  \right)
= - \inf\limits_{\boldsymbol{u} \in \{ \boldsymbol{u}: \bbf(\x)^\top \boldsymbol{u} \le 0 \}}  I^{i,i^*(\x)}(\boldsymbol{u}).
$
Since
$
\min\limits_{\boldsymbol{u}} I^{i,i^*(\x)}(\boldsymbol{u})  ~
s.t.~\bbf(\x)^\top \boldsymbol{u} \le 0
$
is a convex programming problem, we can find the optimal solution by checking the optimality condition. This lemma is proved.

\subsection{Proof of Theorem 4}

Model (8) of the main paper is equivalent to
\begin{equation}\label{eq:reprob2}
	\begin{aligned}
		\min ~ & z  \\
		s.t. ~ & \frac{1}{\Gcal_{i^*(\x_j),i,j}^{L}(\alphab)} \le z, \  i=1,2,...,k \text{ and }i\neq i^*(\x_j), j=1,2,...,m,  \\
		& \sum_{i=1}^k \sum_{h=1}^p \alpha_{i,h}=1, \  \alpha_{i,h}\geq0, ~ i=1,2,...,k, \ h=1,2,...,p.
	\end{aligned}
\end{equation}
We will analyze \eqref{eq:reprob2} instead of (8) of the main paper because $\frac{1}{\Gcal_{i^*(\x_j),i,j}^{L}(\alphab)}$ is more tractable.
The Lagrangian of \eqref{eq:reprob2} is
\begin{align*}
	L(\alphab,z,\blam,\theta) =& z + \sum_{j=1}^m \sum_{i=1, i\ne i^*(\x_j)}^k \lambda_{i,j} \left( \frac{1}{\Gcal_{i^*(\x_j),i,j}^{L}(\alphab)} - z \right) + \theta \left( \sum_{i=1}^k \sum_{h=1}^p \alpha_{i,h}-1 \right).
\end{align*}
Note that
\begin{align*}
	&\sum_{j=1}^m \sum_{i=1, i\ne i^*(\x_j)}^k \lambda_{i,j} \frac{2 \bbf(\x_j)^\top \left( \F^\top \F \right)^{-1}  \F^\top \left(\bSigmap_{\epsilon,i}+\bSigmap_{\epsilon,i^*(\x_j)}\right) \F \left( \F^\top \F \right)^{-1} \bbf(\x_j) }{\left[ \bbf(\x_j)^\top (\bbeta_i - \bbeta_{i^*(\x_j)}) \right]^2 }   \\
	=&  \sum_{j=1}^m \sum_{i=1, i\ne i^*(\x_j)}^k \frac{2\lambda_{i,j}}{ \left[ \bbf(\x_j)^\top (\bbeta_i - \bbeta_{i^*(\x_j)}) \right]^2 } \sum_{h=1}^{p} \left( \bbf(\x_{j})^\top \left( \F^\top \F \right)^{-1} \bbf(\x_{h}^\circ) \right)^2 \left( \frac{\sigma^2_i(\x_{h}^\circ)}{\alpha_{i,h}} + \frac{\sigma^2_{i^*(\x_j)}(\x_{h}^\circ)}{\alpha_{i^*(\x_j),h}} \right)  \\
	=& \sum_{i'=1}^k \Bigg( \sum_{j \in \Ccal_{i'}} \sum_{i=1,i\ne i'}^{k} \frac{2\lambda_{i,j}}{ \left[ \bbf(\x_j)^\top (\bbeta_i - \bbeta_{i'}) \right]^2 } \sum_{h=1}^{p} \left( \bbf(\x_{j})^\top \left( \F^\top \F \right)^{-1} \bbf(\x_{h}^\circ) \right)^2  \frac{\sigma^2_{i'}(\x_{h}^\circ)}{\alpha_{i',h}} \\
	& \phantom{\sum\Bigg(} + \sum_{j \notin \Ccal_{i'}} \frac{2\lambda_{i',j}}{ \left[ \bbf(\x_j)^\top (\bbeta_{i'} - \bbeta_{i^*(\x_j)}) \right]^2 } \sum_{h=1}^{p} \left( \bbf(\x_{j})^\top \left( \F^\top \F \right)^{-1} \bbf(\x_{h}^\circ) \right)^2  \frac{\sigma^2_{i'}(\x_{h}^\circ)}{\alpha_{i',h}} \Bigg)  \\
	=& \sum_{i'=1}^k \sum_{h=1}^{p} \frac{\sigma^2_{i'}(\x_{h}^\circ)}{\alpha_{i',h}} \Bigg( \sum_{j \in \Ccal_{i'}} \sum_{i=1,i\ne i'}^{k} \frac{2\lambda_{i,j}}{ \left[ \bbf(\x_j)^\top (\bbeta_i - \bbeta_{i'}) \right]^2 }  \left( \bbf(\x_{j})^\top \left( \F^\top \F \right)^{-1} \bbf(\x_{h}^\circ) \right)^2    \\
	&\phantom{\sum \sum \frac{1}{\alpha_{i',j'}}\Bigg(} + \sum_{j \notin \Ccal_{i'}} \frac{2\lambda_{i',j}}{ \left[ \bbf(\x_j)^\top (\bbeta_{i'} - \bbeta_{i^*(\x_j)}) \right]^2 } \left( \bbf(\x_{j})^\top \left( \F^\top \F \right)^{-1} \bbf(\x_{h}^\circ) \right)^2   \Bigg).
\end{align*}
Then the dual function is
\begin{align*}
	g(\blam,\theta) =& \inf_{\alphab,z} L(\alphab,z,\blam,\theta)  \\
	=& \inf_{\alphab,z} \left( 1-\sum_{j=1}^m \sum_{i=1, i\ne i^*(\x_j)}^k\lambda_{i,j} \right) z + \theta \left( \sum_{i=1}^k \sum_{h=1}^p \alpha_{i,h}-1 \right) \\
	&+ \sum_{j=1}^m \sum_{i=1, i\ne i^*(\x_j)}^k \lambda_{i,j}  \frac{2 \bbf(\x_j)^\top \left( \F^\top \F \right)^{-1}  \F^\top \left(\bSigmap_{\epsilon,i}+\bSigmap_{\epsilon,i^*(\x_j)}\right) \F \left( \F^\top \F \right)^{-1} \bbf(\x_j) }{\left[ \bbf(\x_j)^\top (\bbeta_i - \bbeta_{i^*(\x_j)}) \right]^2 }  \\
	=& \inf_{\alphab,z} \left( 1-\sum_{j=1}^m \sum_{i=1, i\ne i^*(\x_j)}^k\lambda_{i,j} \right) z + \sum_{i=1}^k \sum_{h=1}^p \left( \frac{\chi_{i,h}(\blam, \boldsymbol{y}, \boldsymbol{\sigma}^2)}{\alpha_{i,h}} + \theta \alpha_{i,h} \right) - \theta.
\end{align*}
When $\sum_{j=1}^m \sum_{i=1, i\ne i^*(\x_j)}^k\lambda_{i,j} \ne 1 $ or $\theta \le 0$, $g(\blam,\theta) = -\infty$. And when $\sum_{j=1}^m \sum_{i=1, i\ne i^*(\x_j)}^k\lambda_{i,j} = 1 $ and $\theta >0$, $\alpha_{i,h} = \sqrt{\frac{\chi_{i,h}(\blam, \boldsymbol{y}, \boldsymbol{\sigma}^2)}{\theta}}$ corresponds to the minimum point. Therefore,
\begin{align*}
	g(\blam,\theta)= \begin{cases}
		\sqrt{\theta}\sum_{i=1}^k \sum_{h=1}^p 2\sqrt{\chi_{i,h}(\blam, \boldsymbol{y}, \boldsymbol{\sigma}^2)} - \theta, & \sum_{j=1}^m \sum_{i=1, i\ne i^*(\x_j)}^k\lambda_{i,j}=1 \text{ and } \theta > 0, \\
		-\infty, & \text{otherwise.}
	\end{cases}
\end{align*}
The Lagrange dual problem of \eqref{eq:reprob2} is
\begin{equation*}
	\begin{aligned}
		\max_{\lambda,\theta} ~ & \sqrt{\theta}\sum_{i=1}^k \sum_{h=1}^p 2\sqrt{\chi_{i,h}(\blam, \boldsymbol{y}, \boldsymbol{\sigma}^2)} - \theta  \\
		s.t. ~ & \sum_{j=1}^m \sum_{i=1, \ i\ne i^*(\x_j)}^k\lambda_{i,j}=1, \lambda_{i,j} \ge 0, ~i=1,\dots,k \text{ and } i \ne i^*(\x_j), \ j = 1,\dots,m; \theta > 0.
	\end{aligned}
\end{equation*}
which can be simplified by taking $\theta = \left( \sum_{i=1}^k \sum_{h=1}^p \sqrt{\chi_{i,h}(\blam, \boldsymbol{y}, \boldsymbol{\sigma}^2)} \right)^2$ as
\begin{equation*}
	\begin{aligned}
		\max_{\lambda} ~ & \left( \sum_{i=1}^k \sum_{h=1}^p \sqrt{\chi_{i,h}(\blam, \boldsymbol{y}, \boldsymbol{\sigma}^2)} \right)^2  \\
		s.t. ~ & \sum_{j=1}^m \sum_{i=1, i\ne i^*(\x_j)}^k\lambda_{i,j}=1, \ \lambda_{i,j} \ge 0, ~i=1,\dots,k \text{ and } i \ne i^*(\x_j), \ j = 1,\dots,m.
	\end{aligned}
\end{equation*}
Since \eqref{eq:reprob2} is a convex program and satisfies the Slater's condition  \citep{boyd2004}, strong duality holds so that the optimal solution $(\alphab,z)$ to \eqref{eq:reprob2} (and also problem (8) of the main paper) is also the optimal point of   $\min_{\alphab',z} L(\alphab',z,\blam,\theta)$, where $(\blam,\theta)$ is the optimal for Lagrange dual problem (10) of the main paper.

Note that $\alpha_{i,h} = \sqrt{\frac{\chi_{i,h}(\blam, \boldsymbol{y}, \boldsymbol{\sigma}^2)}{\theta}}$ and $\theta = \left( \sum_{i=1}^k \sum_{h=1}^p \sqrt{\chi_{i,h}(\blam, \boldsymbol{y}, \boldsymbol{\sigma}^2)} \right)^2$ at the optimal solution to problem  (10). Thus, the optimal solution to \eqref{eq:reprob2} (and also problem (8) of the main paper) satisfies $\alpha_{i,h} = \sqrt{\frac{\chi_{i,h}(\blam, \boldsymbol{y}, \boldsymbol{\sigma}^2)}{\theta}} = \frac{\sqrt{\chi_{i,h}(\blam, \boldsymbol{y}, \boldsymbol{\sigma}^2)}}{\sum_{i=1}^k \sum_{h=1}^p \sqrt{\chi_{i,h}(\blam, \boldsymbol{y}, \boldsymbol{\sigma}^2)}}$ where $\blam$ is the solution to Lagrange dual problem (10) of the main paper.	

\subsection{Proof of Theorem 5}

We adapt the proofs of Theorem 5 in \cite{zhou2021} and Proposition 6.1 in \cite{lin2009decomposition} to show Theorem 5 here. Note that \cite{zhou2021} considers the classic R\&S instead of CR\&S and \cite{lin2009decomposition} solves a deterministic convex program. Thus, our proof is not a simple application of the techniques in the references. The proof of Theorem 5 requires Lemmas \ref{lem:ref} and \ref{lem:lincons}. Lemma \ref{lem:ref} provides more insights into CR\&S Algorithm 2 and Lemma \ref{lem:lincons} shows the consistency of it. Theorem 5 is then proved by showing that $a(\hat{\blam}^{(r)},\boldsymbol{y}, \boldsymbol{\sigma}^2)$ decreases with $r$ when $r$ is large enough.

For any $\blam$, define a feasible direction of $\blam$ as $\dfrak \in \mathbb{R}^{(k-1)m}$. Note that each element of $\dfrak$ corresponds to an element of $\blam$. The $\dfrak$ should satisfy $\mathbf{1}^\top \dfrak = 0$ and the element of $\dfrak$ that corresponds to $\lambda_{i,j}$ is strictly positive if $\lambda_{i,j}=0$. Let $\Dcal(\blam)$ denote the set of all the feasible directions. To show that $\blam$ is a stationary point, we should prove that $\nabla a(\blam,\boldsymbol{y}, \boldsymbol{\sigma}^2)^\top \dfrak \ge 0$ for any $\dfrak \in \Dcal(\blam)$. However,  the next lemma showed that we just need to prove $\nabla a(\blam,\boldsymbol{y}, \boldsymbol{\sigma}^2)^\top \dfrak \ge 0$ for a small subset of $\Dcal(\blam)$.
\begin{lemma}[\cite{zhou2021,lin2009decomposition}]\label{lem:ref}
	For any $\blam$, suppose $\lambda_{i,j} > 0$ and define $\Dcal^{(i,j)}(\blam) =  \{e_{i',j'} - e_{i,j}: i' \ne i \text{ or } j' \ne j\} \bigcup \{e_{i,j}-e_{i',j'}: i' \ne i \text{ or } j' \ne j, \lambda_{i',j'} > 0\}$. Then $\blam$ is a stationary point of problem (10) of the main paper if and only if $\nabla a(\blam,\boldsymbol{y}, \boldsymbol{\sigma}^2)$ is well-defined and $\nabla a(\blam,\boldsymbol{y}, \boldsymbol{\sigma}^2)^\top \dfrak \ge 0$, $\forall \dfrak \in \Dcal^{(i,j)}(\blam)$.
\end{lemma}

Based on Lemma \ref{lem:ref}, instead of considering all feasible directions for the gradient descent, we can simply find one of the $(i,j)$'s such that $\lambda_{i,j} > 0$ and consider the feasible directions in $\Dcal^{(i,j)}(\blam)$ only. In CR\&S Algorithm 2, Step 2 finds the $(i^{r*},j^{r*})$ satisfying $\hat{\lambda}_{i^{r*},j^{r*}} > \eta$ instead of $\hat{\lambda}_{i^{r*},j^{r*}} > 0$ to prevent $\hat{\lambda}_{i^{r*},j^{r*}}$ from being too small. Thus, $\eta$ is used for numerical stability.
Given the chosen $\hat{\lambda}_{i^{r*},j^{r*}}$,  Step 3 of CR\&S Algorithm 2 selects a descent direction from $\Dcal^{(i^{r*},j^{r*})}(\hat{\blam})$. As a result, the rationale of Steps 2 and 3 follows from Lemma \ref{lem:ref}.

To prove Theorem 5, we first show the consistency of CR\&S Algorithm 2. We need to show that $\widehat{\bbeta}_i \to \bbeta_i$ with probability one for all $i=1,\dots,k$. Fix a sample path $\omega$. On the sample path $\omega$, let $A$ denote the set of context-treatment pairs $(i,\x_h^\circ)$ such that $n_{i,h} \to \infty$ in CR\&S Algorithm 2.

By the strong law of large numbers,  $\bar{Y}_{i}(\x_h^\circ) \to y_i(\x_h^\circ)$ and $\hat{\sigma}_{i}^2(\x_h^\circ) \to \sigma_{i}^2(\x_h^\circ)$ as $r \to \infty$ if $(i,\x_h^\circ) \in A$. Meanwhile, there exists a large enough $r_0$ such that $(i,\x_h^\circ)$ does not receive any samples for all iteration $r > r_0$ and $(i,\x_h^\circ) \notin A$.

Let $\bar{y}_i(\x_h^\circ)$ and $\bar{\sigma}_{i}^2(\x_h^\circ)$ denote the limiting value of $\bar{Y}_{i}(\x_h^\circ) $ and $\hat{\sigma}_{i}^2(\x_h^\circ)$, $i=1,\dots,k$, $h=1,\dots,p$. That is, $\bar{Y}_{i}(\x_h^\circ) \to \bar{y}_i(\x_h^\circ)$ and $\hat{\sigma}_{i}^2(\x_h^\circ) \to \bar{\sigma}_{i}^2(\x_h^\circ)$ as $r \to \infty$. (Obviously, $\bar{y}_i(\x_h^\circ) = y_i(\x_h^\circ)$ and $\bar{\sigma}_{i}^2(\x_h^\circ) = \sigma_{i}^2(\x_h^\circ)$ if $(i,\x_h^\circ) \in A$.) Let $\bar{\sigma}^2_{\max}$ and $\bar{\sigma}^2_{\min}$ denote the upper and lower bounds of $\bar{\sigma}_{i}^2(\x_h^\circ)$ for all $i$ and $h$.

For simplicity, we define some notations. Let $\bar{\boldsymbol{y}}_i = \left(\bar{y}_i(\x_1^\circ),\ldots, \bar{y}_i(\x_p^\circ)\right)^\top$, $\bar{\boldsymbol{\sigma}}^2_i = \left( \bar{\sigma}_{i}^2(\x_1^\circ), \dots, \bar{\sigma}_{i}^2(\x_p^\circ) \right)^\top$, $\bar{\boldsymbol{y}} = \left(\bar{\boldsymbol{y}}_1^\top,\ldots,\bar{\boldsymbol{y}}_k^\top \right)^\top$, and $\bar{\boldsymbol{\sigma}}^2 = \left( \bar{\boldsymbol{\sigma}}^{2,\top}_1, \dots, \bar{\boldsymbol{\sigma}}^{2,\top}_k \right)^\top$. Note that $\widehat{\bbeta}_i= \left( \F^\top \F \right)^{-1} \F^\top \overline{\Y}_i$ converges to $\bar{\bbeta}_i = \left( \F^\top \F \right)^{-1} \F^\top \bar{\boldsymbol{y}}_i$ as $r \to \infty$. Let $\bar{i}^*(\x_j) = \arg \min_{i=1,\dots,k} \bbf(\x_j)^\top \bar{\bbeta}_i $ denote the estimated best treatment as $r \to \infty$, $j=1,\dots,m$. Let  $\bar{\Ccal}_i = \{j: \bar{i}^*(\x_j) = i\}$ and $\hat{\Ccal}_i = \{j: \hat{i}^*(\x_j) = i\}$.

Define $\psi_{i_1,j_1,i_2,h}(\boldsymbol{y}) = \frac{2 \left( \bbf(\x_{j_1})^\top \left( \F_\circ^\top \F_\circ \right)^{-1} \bbf(\x_{h}^\circ) \right)^2}{ \left[ \bbf(\x_{j_1})^\top \left(\bbeta_{i_1} - \bbeta_{i_2} \right) \right]^2 } $ if $i_2=i^*(\x_{j_1})$ and $\psi_{i_1,j_1,i_2,h}(\boldsymbol{y})$ $= 0$ if $i_2 \ne  i^*(\x_{j_1})$. Then, by (11) of the main paper, $\chi_{i,h}(\blam, \boldsymbol{y}, \boldsymbol{\sigma}^2)$ can be rewritten as
\begin{align*}
	\chi_{i,h}(\blam, \boldsymbol{y}, \boldsymbol{\sigma}^2) =& \sigma^2_{i}(\x_{h}^\circ) \left(  \sum\limits_{j' \in \Ccal_i} \sum\limits_{i'=1,i'\ne i}^{k} \lambda_{i',j'}  \psi_{i',j',i,h}(\boldsymbol{y}) + \sum\limits_{j' \notin \Ccal_i} \lambda_{i,j'} \psi_{i,j',i^*(\x_{j'}),h}(\boldsymbol{y}) \right).
\end{align*}
Similarly, we define $\psi_{i_1,j_1,i_2,h}(\bar{\boldsymbol{y}}) = \frac{2 \left( \bbf(\x_{j_1})^\top \left( \F^\top \F \right)^{-1} \bbf(\x_{h}^\circ) \right)^2}{ \left[ \bbf(\x_{j_1})^\top \left( \bar{\bbeta}_{i_1} - \bar{\bbeta}_{i_2} \right) \right]^2 } = \frac{2 \left( \bbf(\x_{j_1})^\top \left( \F^\top \F \right)^{-1} \bbf(\x_{h}^\circ) \right)^2}{ \left[ \bbf(\x_{j_1})^\top \left( \F^\top \F \right)^{-1} \F^\top \left( \bar{\boldsymbol{y}}_{i_1} - \bar{\boldsymbol{y}}_{i_2} \right) \right]^2 }$ if $i_2=\bar{i}^*(\x_{j_1})$ and $\psi_{i_1,j_1,i_2,h}(\bar{\boldsymbol{y}})= 0$ if $i_2 \ne  \bar{i}^*(\x_{j_1})$. Define $\psi_{i_1,j_1,i_2,h}(\overline{\Y})$ similarly. Let $\psi_{\text{max}}$ denote the upper bound of $\psi_{i_1,j_1,i_2,h}(\bar{\boldsymbol{y}})$ for all $i_1$, $j_1$, $i_2$ and $h$.

Let $\Ccal_{i,h}  = \{(i_1,j_1):\text{ either } i_1 \text{ or } i^*(\x_{j_1}) \text{ is } i \text{ and } \psi_{i_1,j_1,i^*(\x_{j_1}),h} (\boldsymbol{y}) \ne 0\}$ which is the set of indices $(i_1,j_1)$ such that $\psi_{i_1,j_1, i^*(\x_{j_1}),h}(\boldsymbol{y}) \ne 0$ and is used for calculating $\chi_{i,h}(\blam, \boldsymbol{y}, \boldsymbol{\sigma}^2)$. Let $\Ccal = \cup_{i=1}^k \cup_{h=1}^p \Ccal_{i,h}$.
Let $\bar{\Ccal}_{i,h}$ be the estimate of $\Ccal_{i,h}$ when plugging in $\bar{\boldsymbol{y}}$  and let $\hat{\Ccal}_{i,h}$ be the estimate of $\Ccal_{i,h}$ when plugging in $\overline{\Y}$. Let $\bar{\Ccal} = \cup_{i=1}^k \cup_{h=1}^p \bar{\Ccal}_{i,h}$ and $\hat{\Ccal} = \cup_{i=1}^k \cup_{h=1}^p \hat{\Ccal}_{i,h}$. $\bar{\Ccal}$ indicates the index of the strictly positive $\psi_{i_1,j_1,i_2,h}(\bar{\boldsymbol{y}})$'s  (or equivalently, $\psi_{i_1,j_1,\bar{i}^*(\x_{j_1}),h}(\bar{\boldsymbol{y}})$'s, by noting that $\psi_{i_1,j_1,i_2,h}(\bar{\boldsymbol{y}})=0$ if $i_2 \ne \bar{i}^*(\x_{j_1})$). Let $\psi_{\text{min}}$  denote the minimum of  $\psi_{i_1,j_1,\bar{i}^*(\x_{j_1}),h}(\bar{\boldsymbol{y}})$ where $(i_1,j_1) \in \bar{\Ccal}$.
For  $(i_1,j_1) \in \bar{\Ccal}$,
we can show by continuity that $\left| \psi_{i_1,j_1,\bar{i}^*(\x_{j_1}),h}(\overline{\Y}) - \psi_{i_1,j_1,\bar{i}^*(\x_{j_1}),h}(\bar{\boldsymbol{y}}) \right| \le C_{a0} \lVert \overline{\Y} - \bar{\boldsymbol{y}} \rVert $.

%
%

Define the derivative of $\sqrt{ \chi_{i,h}(\blam, \boldsymbol{y}, \boldsymbol{\sigma}^2)  }$ with respect to $\lambda_{i_1,j_1}$ as $\varsigma_{i_1,j_1,i,h}(\blam,\boldsymbol{y}, \boldsymbol{\sigma}^2) = \frac{\partial \sqrt{ \chi_{i,h}(\blam, \boldsymbol{y}, \boldsymbol{\sigma}^2)  }}{ \partial  \lambda_{i_1,j_1}}$. Then,  $\varsigma_{i_1,j_1,i,h}(\blam,\boldsymbol{y}, \boldsymbol{\sigma}^2) = \sigma_{i}^2(\x_h^\circ) \frac{\psi_{i_1,j_1,i^*(\x_{j_1}),h}(\boldsymbol{y})}{2\sqrt{ \chi_{i,h}(\blam, \boldsymbol{y}, \boldsymbol{\sigma}^2)  }}$ if $(i_1,j_1) \in \Ccal_{i,h}$;  $\varsigma_{i_1,j_1,i,h}(\blam,$ $\boldsymbol{y}, \boldsymbol{\sigma}^2) =0$ if $(i_1,j_1) \notin \Ccal_{i,h}$.
Let $\Ccal_{(i,j)} = \{(i',h'): \varsigma_{i,j,i',h'}(\blam,\boldsymbol{y}, \boldsymbol{\sigma}^2) \ne 0\}$. $\bar{\Ccal}_{(i,j)}$ and $\hat{\Ccal}_{(i,j)}$ are obtained when $(\bar{\boldsymbol{y}}, \bar{\boldsymbol{\sigma}}^2)$ and $(\overline{\Y}, \hat{\boldsymbol{\sigma}}^2)$ are plugged in respectively. Note that $\Ccal_{(i,j)}$ is defined for the  treatment-context pair $(i,\x_j)$, while $\Ccal_{i,h}$ is defined for the treatment-context pair $(i,\x_h^\circ)$.

In the following, we always assume that $r>r_0$ is large enough so that for a $\varepsilon_0$ small enough, all estimates will not deviate from their limits by more than $\varepsilon_0$. For example, when $r>r_0$, $\left| \bar{Y}_i(\x_{h}^\circ) - \bar{y}_i(\x_{h}^\circ) \right| \le \varepsilon_0$, $\left| \hat{\sigma}_i(\x_{h}^\circ) - \bar{\sigma}_i(\x_{h}^\circ) \right| \le \varepsilon_0$, and $\left|\psi_{i_1,j_1,\hat{i}^*(\x_{j_1}),h}(\overline{\Y}) - \psi_{i_1,j_1,\bar{i}^*(\x_{j_1}),h}(\bar{\boldsymbol{y}})\right| \le \epsilon_0$. Since  $\hat{i}^*(\x_j) = \bar{i}^*(\x_j)$, $j=1,\dots,m$, when $r>r_0$, we can show that $ \hat{\Ccal}_{i,h} = \bar{\Ccal}_{i,h}$ and $\bar{\Ccal}_{(i,j)}=\hat{\Ccal}_{(i,j)}$ for $r>r_0$. Again, we append subscript/superscript $r$ to notations in CR\&S Algorithm 2 to indicate the iteration number in the subsequent proof.


\begin{lemma}\label{lem:lincons}
	Under CR\&S Algorithm 2, $\hat{\alpha}_{i,h} = \Theta(1)$ almost surely for all $i=1,\dots,k$, $h=1,\dots,p$.
\end{lemma}

\begin{proof}
	It is sufficient to show
	\begin{align}\label{ineq:lowbdalgo2}
		\mathop{\lim \inf}_{r \to \infty} \chi_{i,h}(\hat{\blam}^{(r)}, \overline{\Y}^{(r)}, \hat{\boldsymbol{\sigma}}^{2,(r)})
		=   \mathop{\lim \inf}_{r \to \infty} \hat{\sigma}_{i,r}^{2}(\x_h^\circ) \sum\limits_{(i_1,j_1) \in \bar{\Ccal}_{i,h}}  \hat{\lambda}_{(i_1,j_1),r}  \psi_{i_1,j_1,\bar{i}^*(\x_{j_1}),h}(\overline{\Y}^{(r)})  > 0.
	\end{align}
	The following inequalities will be used in this proof. First, a general upper bound is
	{\small \begin{align}
			&\chi_{i,h}(\hat{\blam}^{(r)}, \overline{\Y}^{(r)}, \hat{\boldsymbol{\sigma}}^{2,(r)})  = \hat{\sigma}_{i,r}^{2}(\x_{h}^\circ)\sum\limits_{(i_1,j_1) \in \bar{\Ccal}_{i,h}}  \hat{\lambda}_{(i_1,j_1),r}  \psi_{i_1,j_1,\bar{i}^*(\x_{j_1}),h}(\overline{\Y}^{(r)})  \nonumber \\
			\le&    k(m-1) (\psi_{\text{max}} + \varepsilon_0) (\bar{\sigma}^2_{\max} + \varepsilon_0)  \max_{(i_1,j_1) \in \bar{\Ccal}_{i,h}} \hat{\lambda}_{(i_1,j_1),r}
			\le  k(m-1) (\psi_{\text{max}} + \varepsilon_0) (\bar{\sigma}^2_{\max} + \varepsilon_0) \triangleq C_1 . \label{ineq:cons1}
	\end{align}}
	The second inequality is, for any $(i_1,j_1) \in \bar{\Ccal}_{i_0,h_0}$, $i_0=1,\dots,k$, $h_0=1,\dots,p$,
	{\small \begin{align}
			&\left[ \nabla a(\hat{\blam}^{(r)},\overline{\Y}^{(r)}, \hat{\boldsymbol{\sigma}}^{2,(r)}) \right]_{(i_1,j_1)}
			= -\sum_{i=1}^k \sum_{h=1}^p \varsigma_{i_1,j_1,i,h} (\hat{\blam}^{(r)},\overline{\Y}^{(r)}, \hat{\boldsymbol{\sigma}}^{2,(r)})
			\le - \frac{ \hat{\sigma}_{i,r}^{2}(\x_{h}^\circ) \psi_{i_1, j_1,\bar{i}^*(\x_{j_1}),h_0}(\overline{\Y}^{(r)})}{2\sqrt{ \chi_{i_0,h_0}(\hat{\blam}^{(r)},\overline{\Y}^{(r)}, \hat{\boldsymbol{\sigma}}^{2,(r)})  }}  \nonumber \\
			\le& - \frac{ (\bar{\sigma}^2_{\text{min} } - \varepsilon_0) (\psi_{\text{min}}-\varepsilon_0)}{2 \sqrt{k(m-1) (\psi_{\text{max}} + \varepsilon_0) (\bar{\sigma}^2_{\max} + \varepsilon_0)  \max_{(i,j) \in \bar{\Ccal}_{i_0,h_0}} \hat{\lambda}_{(i,j),r}}}
			\triangleq -\frac{C_2}{\sqrt{\max_{(i,j) \in \bar{\Ccal}_{i_0,h_0}} \hat{\lambda}_{(i,j),r}}}. \label{ineq:cons11}
	\end{align}}
	Third,  for any $(i_1,j_1)$, $i_1=1,\dots,k$, $j_1=1,\dots,m$,
	{\small \begin{align}
			0\ge&\left[ \nabla a(\hat{\blam}^{(r)},\overline{\Y}^{(r)}, \hat{\boldsymbol{\sigma}}^{2,(r)}) \right]_{(i_1, j_1)}
			= -\sum_{i=1}^k \sum_{h=1}^p \varsigma_{i_1, j_1, i, h } (\hat{\blam}^{(r)},\overline{\Y}^{(r)}, \hat{\boldsymbol{\sigma}}^{2,(r)}) = -\sum_{(i,h) \in \bar{\Ccal}_{(i_1, j_1)}} \varsigma_{i_1, j_1, i, h } (\hat{\blam}^{(r)},\overline{\Y}^{(r)}, \hat{\boldsymbol{\sigma}}^{2,(r)})  \nonumber \\
			\ge& -\sum_{(i,h) \in \bar{\Ccal}_{(i_1, j_1)}}  \frac{ (\bar{\sigma}^2_{\text{max}} + \varepsilon_0) \psi_{i_1, j_1,\bar{i}^*(\x_{j_1}),h}(\overline{\Y}^{(r)})}{2\sqrt{ (\bar{\sigma}^2_{\text{min}} - \varepsilon_0) \hat{\lambda}_{(i_1, j_1),r} \psi_{i_1, j_1,\bar{i}^*(\x_{j_1}),h}(\overline{\Y}^{(r)}) }}
			= -\frac{\bar{\sigma}^2_{\text{max}} + \varepsilon_0}{2} \sum_{(i,h) \in \bar{\Ccal}_{(i_1, j_1)}} \sqrt{ \frac{\psi_{i_1, j_1,\bar{i}^*(\x_{j_1}),h}(\overline{\Y}^{(r)})}{ (\bar{\sigma}^2_{\text{min}} - \varepsilon_0) \hat{\lambda}_{(i_1, j_1),r} } } \nonumber \\
			\ge& -\frac{kp (\bar{\sigma}^2_{\text{max}} + \varepsilon_0) }{2} \sqrt{\frac{\psi_{\text{max}}}{\bar{\sigma}^2_{\text{min}} - \varepsilon_0}} \frac{1}{\sqrt{\hat{\lambda}_{(i_1, j_1),r}}} \triangleq  -\frac{C_3}{\sqrt{\hat{\lambda}_{(i_1, j_1),r}}}. \label{ineq:cons2}
	\end{align}}
	
	Suppose there exists a pair $(i_0,h_0)$ with $\mathop{\lim \inf}_{r \to \infty} \chi_{i_0,h_0}(\hat{\blam}^{(r)}, \overline{\Y}^{(r)}, \hat{\boldsymbol{\sigma}}^{2,(r)}) = 0$. Since $\chi_{i_0,h_0}(\hat{\blam}^{(r)},$ $ \overline{\Y}^{(r)}, \hat{\boldsymbol{\sigma}}^{2,(r)}) = \hat{\sigma}_{i_0,r}^{2}(\x_{h_0}^\circ) \sum\limits_{(i_1,j_1) \in \bar{\Ccal}_{i_0,h_0}}  \hat{\lambda}_{(i_1,j_1),r}  \psi_{i_1,j_1,\bar{i}^*(\x_{j_1}),h_0}(\overline{\Y}^{(r)})$ and  $\psi_{i_1,j_1,\bar{i}^*(\x_{j_1}),h_0}(\overline{\Y}^{(r)})$ is lower bounded by $\psi_{\text{min}} -\varepsilon_0 > 0$, we have $ \mathop{\lim \inf}_{r \to \infty} \sup_{(i_1,j_1) \in \bar{\Ccal}_{i_0,h_0}} \hat{\lambda}_{(i_1,j_1),r} \to 0$.
	
	Let $C_0 = \min \{ 2/3,(C_2/C_3)^2  \} \eta$ and
	{\small
		\begin{align}\label{ineq:defeps}
			\varepsilon \le  \min\left\{ \frac{C_0}{3}, C_2^2\left( \frac{C_3(1+\sfrak_2)}{\sqrt{\eta}} + \sfrak_2 \frac{C_3}{\sqrt{C_0}}  \right)^{-2}, C_2^2 \left( C_3 \sqrt{\frac{3}{2  \eta}} + \frac{3\sfrak_2kp\sqrt{C_1}}{2\sfrak_1 \eta} \right)^{-2}, \frac{1}{k(m-1)} \right\}.
	\end{align}}
	Since $ \mathop{\lim \inf}_{r \to \infty} \sup_{(i_1,j_1) \in \bar{\Ccal}_{i_0,h_0}} \hat{\lambda}_{(i_1,j_1),r} \to 0$ by assumption, there exists $r_t$ such that $\hat{\lambda}_{(i_1,j_1),r_t} < \varepsilon$ for all $(i_1,j_1) \in \bar{\Ccal}_{i_0,h_0}$ while $\hat{\lambda}_{(i_0',j_0'),r_t-1} \ge \varepsilon$ for some $(i_0',j_0') \in \bar{\Ccal}_{i_0,h_0}$ because the initial value $\hat{\lambda}_{(i_0',j_0'),0} = \frac{1}{k(m-1)} \ge \varepsilon$.  Since  $\hat{\lambda}_{(i_1,j_1),r_t} < \varepsilon$ for all $(i_1,j_1) \in \bar{\Ccal}_{i_0,h_0}$, the value of $\hat{\lambda}_{(i_1,j_1),r_t-1}$ must fall in one of the following three cases.
	\begin{enumerate}
		\item[(1)] There exists $(i'_0,j'_0) \in \bar{\Ccal}_{i_0,h_0}$ such that $ \varepsilon \le \hat{\lambda}_{(i'_0,j'_0),r_{t}-1} \le C_0$, while $\hat{\lambda}_{(i_1,j_1),r_{t}-1} \le \varepsilon$ for $(i_1,j_1) \ne (i'_0,j'_0)$ and $(i_1,j_1) \in \bar{\Ccal}_{i_0,h_0}$. 
		\item[(2)] There exists $(i'_0,j'_0)\in \bar{\Ccal}_{i_0,h_0}$ such that $C_0 < \hat{\lambda}_{(i'_0,j'_0),r_{t}-1} < \eta$, while $\hat{\lambda}_{(i_1,j_1),r_{t}-1} \le \varepsilon$ for $(i_1,j_1) \ne (i'_0,j'_0)$ and $(i_1,j_1) \in \bar{\Ccal}_{i_0,h_0}$.
		\item[(3)] There exists $(i'_0,j'_0)\in \bar{\Ccal}_{i_0,h_0}$ such that $ \hat{\lambda}_{(i'_0,j'_0),r_{t}-1} \ge \eta$, while $\hat{\lambda}_{(i_1,j_1),r_{t}-1} \le \varepsilon$ for $(i_1,j_1) \ne (i'_0,j'_0)$ and $(i_1,j_1) \in \bar{\Ccal}_{i_0,h_0}$.
	\end{enumerate}
	
	In case (1), since $\hat{\lambda}_{(i_1,j_1),r_{t}-1}  \le C_0 \le \eta$ for all $(i_1,j_1)\in \bar{\Ccal}_{i_0,h_0}$, the selected pair $(i^{r_{t}-1*}, j^{r_{t}-1*})$ in Step 2 of CR\&S Algorithm 2 should satisfy   $(i^{r_{t}-1*}, j^{r_{t}-1*}) \notin \bar{\Ccal}_{i_0,h_0}$ because $\hat{\lambda}_{(i^{r_{t}-1*}, j^{r_{t}-1*}), r_t-1} \ge \eta$. By \eqref{ineq:cons2}, we have
	$
	\left[ \nabla a(\hat{\blam}^{(r_{t}-1)},\overline{\Y}^{(r_{t}-1)}, \hat{\boldsymbol{\sigma}}^{2,(r_{t}-1)}) \right]_{(i^{r_{t}-1*}, j^{r_{t}-1*})}
	\ge -\frac{C_3}{\sqrt{\hat{\lambda}_{(i^{r_{t}-1*}, j^{r_{t}-1*}),r_t-1}}} \ge -\frac{C_3}{\sqrt{\eta}}.
	$
	Meanwhile, for $(i_0',j_0') \in \bar{\Ccal}_{i_0,h_0}$, by \eqref{ineq:cons11},
	\begin{align*}
		\left[ \nabla a(\hat{\blam}^{(r_{t}-1)},\overline{\Y}^{(r_{t}-1)}, \hat{\boldsymbol{\sigma}}^{2,(r_{t}-1)}) \right]_{(i_0',j_0')}  \le& - \frac{C_2}{\sqrt{\max_{(i,j) \in \bar{\Ccal}_{i_0,h_0}} \hat{\lambda}_{(i_1,j_1),r_t-1}}}
		\le -\frac{C_2}{\sqrt{C_0}} \\
		\le& - \frac{C_3}{\sqrt{\eta}}  \le \left[ \nabla a(\hat{\blam}^{(r_t-1)},\overline{\Y}^{(r_t-1)}, \hat{\boldsymbol{\sigma}}^{2,(r_t-1)}) \right]_{(i^{r_{t}-1*}, j^{r_{t}-1*})}.
	\end{align*}
	Then, the direction $\dfrak^{(r)}$ chosen in Step 3 cannot be $e_{i^{r_{t}-1*}, j^{r_{t}-1*}} - e_{i'_0,j'_0}$ because $\nabla a(\hat{\blam}^{(r_t-1)},\overline{\Y}^{(r_t-1)},$ $ \hat{\boldsymbol{\sigma}}^{2,(r_t-1)})^\top (e_{i^{r_{t}-1*}, j^{r_{t}-1*}} - e_{i'_0,j'_0}) \ge 0$. Then $\hat{\lambda}_{(i'_0,j'_0),r_{t}} \ge \hat{\lambda}_{(i'_0,j'_0),r_{t}-1} > \varepsilon$, contradicting $r_t$'s definition.
	
	In case (2), since $\hat{\lambda}_{(i_1,j_1),r_{t}-1} \le \eta$ for all $(i_1,j_1) \in \bar{\Ccal}_{i_0,h_0}$, the selected pair $(i^{r_{t}-1*}, j^{r_{t}-1*})$ in Step 2 of CR\&S Algorithm 2 should satisfy $(i^{r_{t}-1*}, j^{r_{t}-1*}) \notin \bar{\Ccal}_{i_0,h_0}$  because $\hat{\lambda}_{(i^{r_{t}-1*}, j^{r_{t}-1*}),r_t-1} \ge \eta$. To have $\hat{\lambda}_{(i_0',j_0'),r_{t}} \le \varepsilon$, the direction $\dfrak^{(r+1)}$ chosen in Step 2 should be $e_{i^{r_{t}-1*}, j^{r_{t}-1*}} - e_{i'_0,j'_0}$. Moreover, the chosen stepsize $\sfrak^{(r_t)} $ should satisfy $\sfrak^{(r_t)} \ge C_0 - \varepsilon$ and the condition (14) of the main paper:
	{\small \begin{align}\label{eq:lincond8}
			\nabla a(\hat{\blam}^{(r_{t}-1)}+\sfrak^{(r_t)}\cdot \dfrak^{(r_t)},\overline{\Y}^{(r_{t}-1)}, \hat{\boldsymbol{\sigma}}^{2,(r_{t}-1)})^\top \dfrak^{(r_t)}  \le \sfrak_2 |\nabla a(\hat{\blam}^{(r_{t}-1)},\overline{\Y}^{(r_{t}-1)}, \hat{\boldsymbol{\sigma}}^{2,(r_{t}-1)})^\top \dfrak^{(r_t)}|.
	\end{align}}
	Let $\hat{\blam}^{(r_{t})} = \hat{\blam}^{(r_{t}-1)}+\sfrak^{(r_t)}\cdot \dfrak^{(r_t)}$. Since $\dfrak^{(r_t)}= e_{i^{r_{t}-1*}, j^{r_{t}-1*}} - e_{i'_0,j'_0}$, we have $\hat{\lambda}_{(i^{r_{t}-1*}, j^{r_{t}-1*}),r_t} \ge \hat{\lambda}_{(i^{r_{t}-1*}, j^{r_{t}-1*}),r_{t}-1} \ge \eta$. Again, by \eqref{ineq:cons2}, we have
	$
	\left[ \nabla a(\hat{\blam}^{(r_{t}-1)},\overline{\Y}^{(r_{t}-1)}, \hat{\boldsymbol{\sigma}}^{2,(r_{t}-1)}) \right]_{(i^{r_{t}-1*}, j^{r_{t}-1*})}  \ge -\frac{C_3}{\sqrt{\eta}}$ and  $ \left[ \nabla a(\hat{\blam}^{(r_t)},\overline{\Y}^{(r_{t}-1)}, \hat{\boldsymbol{\sigma}}^{2,(r_{t}-1)}) \right]_{(i^{r_{t}-1*}, j^{r_{t}-1*})}   \ge -\frac{C_3}{\sqrt{\eta}}.
	$
	Similarly, since $\hat{\lambda}_{(i'_0,j'_0),r_{t}-1} > C_0$,
	we have by \eqref{ineq:cons2} that $
	\left[ \nabla a(\hat{\blam}^{(r_{t}-1)},\overline{\Y}^{(r_{t}-1)}, \hat{\boldsymbol{\sigma}}^{2,(r_{t}-1)}) \right]_{(i'_0,j'_0)}   \ge -\frac{C_3}{\sqrt{C_0}}.
	$
	By \eqref{eq:lincond8},
	\begin{align}
		&\left[ \nabla a(\hat{\blam}^{(r_t)},\overline{\Y}^{(r_t-1)}, \hat{\boldsymbol{\sigma}}^{2,(r_t-1)}) \right]_{(i'_0,j'_0)}
		\ge \left[ \nabla a(\hat{\blam}^{(r_t)},\overline{\Y}^{(r_t-1)}, \hat{\boldsymbol{\sigma}}^{2,(r_t-1)}) \right]_{(i^{r_{t}-1*}, j^{r_{t}-1*})}  \nonumber \\
		&- \sfrak_2 \left| \left[ \nabla a(\hat{\blam}^{(r_{t}-1)},\overline{\Y}^{(r_{t}-1)}, \hat{\boldsymbol{\sigma}}^{2,(r_{t}-1)}) \right]_{(i^{r_{t}-1*}, j^{r_{t}-1*})} - \left[ \nabla a(\hat{\blam}^{(r_{t}-1)},\overline{\Y}^{(r_{t}-1)}, \hat{\boldsymbol{\sigma}}^{2,(r_{t}-1)}) \right]_{(i'_0,j'_0)} \right|  \nonumber \\
		\ge& -\frac{C_3}{\sqrt{\eta}} - \sfrak_2 \left( \frac{C_3}{\sqrt{\eta}} + \frac{C_3}{\sqrt{C_0}} \right)
		= - \frac{C_3(1+\sfrak_2)}{\sqrt{\eta}}  - \sfrak_2 \frac{C_3}{\sqrt{C_0}}. \label{eq:case2contra}
	\end{align}
	Since $\hat{\lambda}_{(i_1,j_1),r_t} \le \varepsilon$ for all $(i_1,j_1) \in \bar{\Ccal}_{i_0,h_0}$ by $r_t$'s definition, we have by \eqref{ineq:cons11} that
	\begin{align}
		\left[ \nabla a(\hat{\blam}^{(r_t)},\overline{\Y}^{(r_t-1)}, \hat{\boldsymbol{\sigma}}^{2,(r_t-1)}) \right]_{(i'_0,j'_0)}
		&\le - \frac{C_2}{ \sqrt{\max\limits_{(i_1,j_1) \in \bar{\Ccal}_{i_0,h_0}} \hat{\lambda}_{(i_1,j_1),r_t}} } \le  -\frac{C_2}{ \sqrt{\varepsilon} }.  \label{eq:case2contra2}
	\end{align}
	However, $-\frac{C_2}{ \sqrt{\varepsilon} } < - \frac{C_3(1+\sfrak_2)}{\sqrt{\eta}}  - \sfrak_2 \frac{C_3}{\sqrt{C_0}}$ by the definition of $\varepsilon$. \eqref{eq:case2contra} and \eqref{eq:case2contra2} are contradictory.
	
	In case (3), if the selected pair $(i^{r_{t}-1*}, j^{r_{t}-1*})$ in Step 2 of CR\&S Algorithm 2 satisfies $(i^{r_{t}-1*}, j^{r_{t}-1*})$ $\notin \bar{\Ccal}_{i_0,h_0}$,
	we can analyze similarly to case (2) and yield the contradiction. Now suppose the selected pair $(i^{r_{t}-1*}, j^{r_{t}-1*})$ in Step 2 of CR\&S Algorithm 2 satisfies $(i^{r_{t}-1*}, j^{r_{t}-1*}) = (i'_0,j'_0)$. Since $\hat{\lambda}_{(i'_0,j'_0),r_{t}-1} \ge \eta$, we have by \eqref{ineq:cons2} that
	$
	\left[ \nabla a(\hat{\blam}^{(r_{t}-1)},\overline{\Y}^{(r_{t}-1)}, \hat{\boldsymbol{\sigma}}^{2,(r_{t}-1)}) \right]_{(i'_0,j'_0)}   \ge -\frac{C_3}{\sqrt{\eta}}.
	$
	By $r_t$'s definition, $\hat{\lambda}_{(i'_0,j'_0),r_{t}} < \varepsilon$. Thus, $\dfrak^{(r_{t})} = e_{i^\dag,j^\dag} - e_{i'_0,j'_0}$ and $\sfrak^{(r_{t})} \ge \eta - \varepsilon$ because $\hat{\blam}^{(r_{t})} = \hat{\blam}^{(r_{t}-1)}+\sfrak^{(r_t)}\cdot \dfrak^{(r_t)}$ and $\hat{\lambda}_{(i'_0,j'_0),r_{t}-1} \ge \eta$. Moreover, $(i^\dag,j^\dag) \notin \bar{\Ccal}_{i_0,h_0}$ because $\hat{\lambda}_{(i^\dag,j^\dag),r_{t}} \ge \sfrak^{(r_{t})} \ge \eta - \varepsilon \ge \frac{2 \eta}{3} > \varepsilon$. By condition (13) of the main paper,
	\begin{align*}
		&a(\hat{\blam}^{(r_t)},\overline{\Y}^{(r_t-1)}, \hat{\boldsymbol{\sigma}}^{2,(r_t-1)}) - a(\hat{\blam}^{(r_t-1)},\overline{\Y}^{(r_t-1)}, \hat{\boldsymbol{\sigma}}^{2,(r_t-1)})
		\le  \sfrak_1 \sfrak^{(r_t)} \nabla a(\hat{\blam}^{(r_t-1)},\overline{\Y}^{(r_t-1)}, \hat{\boldsymbol{\sigma}}^{2,(r_t-1)})^\top \dfrak^{(r_t)}  \\
		=& \sfrak_1 \sfrak^{(r_t)} \left( \left[ \nabla a(\hat{\blam}^{(r_t-1)},\overline{\Y}^{(r_t-1)}, \hat{\boldsymbol{\sigma}}^{2,(r_t-1)}) \right]_{(i^\dag,j^\dag)} - \left[ \nabla a(\hat{\blam}^{(r_t-1)},\overline{\Y}^{(r_t-1)}, \hat{\boldsymbol{\sigma}}^{2,(r_t-1)}) \right]_{(i'_0,j'_0)} \right).
	\end{align*}
	Then,
	$
	\left[ \nabla a(\hat{\blam}^{(r_t-1)},\overline{\Y}^{(r_t-1)}, \hat{\boldsymbol{\sigma}}^{2,(r_t-1)}) \right]_{(i^\dag,j^\dag)} - \left[ \nabla a(\hat{\blam}^{(r_t-1)},\overline{\Y}^{(r_t-1)}, \hat{\boldsymbol{\sigma}}^{2,(r_t-1)}) \right]_{(i'_0,j'_0)} \\
	\ge \frac{1}{\sfrak_1 \sfrak^{(r_t)}} \left( a(\hat{\blam}^{(r_t)},\overline{\Y}^{(r_t-1)}, \hat{\boldsymbol{\sigma}}^{2,(r_t-1)}) - a(\hat{\blam}^{(r_t-1)},\overline{\Y}^{(r_t-1)}, \hat{\boldsymbol{\sigma}}^{2,(r_t-1)}) \right)
	\ge - \frac{kp\sqrt{C_1}}{\sfrak_1 \sfrak^{(r_t)}}
	\ge - \frac{3kp\sqrt{C_1}}{2\sfrak_1 \eta},
	$
	where the second inequality holds by \eqref{ineq:cons1} and the last inequality holds because $\sfrak^{(r_t)} \ge \frac{2 \eta}{3}$. Since $\hat{\lambda}_{(i^\dag,j^\dag),r_{t}} \ge \frac{2 \eta}{3}$, we have by \eqref{ineq:cons2} that
	$
	\left[ \nabla a(\hat{\blam}^{(r_t)},\overline{\Y}^{(r_t-1)}, \hat{\boldsymbol{\sigma}}^{2,(r_t-1)}) \right]_{(i^\dag,j^\dag)} \ge - C_3 \sqrt{\frac{3}{2  \eta}}.
	$
	By  condition (14) of the main paper, we have
	\begin{align}
		&\left[ \nabla a(\hat{\blam}^{(r_t)},\overline{\Y}^{(r_t-1)}, \hat{\boldsymbol{\sigma}}^{2,(r_t-1)}) \right]_{(i'_0,j'_0)}  \nonumber
		\ge \left[ \nabla a(\hat{\blam}^{(r_t)},\overline{\Y}^{(r_t-1)}, \hat{\boldsymbol{\sigma}}^{2,(r_t-1)}) \right]_{(i^\dag,j^\dag)}  \nonumber \\
		& - \sfrak_2 \left| \left[ \nabla a(\hat{\blam}^{(r_{t}-1)},\overline{\Y}^{(r_{t}-1)}, \hat{\boldsymbol{\sigma}}^{2,(r_{t}-1)}) \right]_{(i^\dag, j^\dag)} - \left[ \nabla a(\hat{\blam}^{(r_{t}-1)},\overline{\Y}^{(r_{t}-1)}, \hat{\boldsymbol{\sigma}}^{2,(r_{t}-1)}) \right]_{(i'_0,j'_0)} \right|  \nonumber \\
		\ge& - C_3 \sqrt{\frac{3}{2  \eta}} - \frac{3\sfrak_2kp\sqrt{C_1}}{2\sfrak_1 \eta}. \label{ineq:case3con1}
	\end{align}
	Similarly to \eqref{eq:case2contra2}, since $\lambda_{(i_1,j_1),r_t} \le \varepsilon$ for all $(i_1,j_1) \in \bar{\Ccal}_{i_0,h_0}$ by $r_t$'s definition, we have by \eqref{ineq:cons11} that
	\begin{align}\label{ineq:case3con2}
		\left[ \nabla a(\hat{\blam}^{(r_t)},\overline{\Y}^{(r_t-1)}, \hat{\boldsymbol{\sigma}}^{2,(r_t-1)}) \right]_{(i'_0,j'_0)} \le - \frac{C_2}{\sqrt{ \varepsilon  } }
		< - C_3 \sqrt{\frac{3}{2  \eta}} - \frac{3\sfrak_2kp\sqrt{C_1}}{2\sfrak_1 \eta}.
	\end{align}
	Again, \eqref{ineq:case3con1} and \eqref{ineq:case3con2} are contradictory. Thus, it is impossible to have $\hat{\lambda}_{(i'_0,j'_0),r_t} \le \varepsilon$. We have $\mathop{\lim \inf}_{r \to \infty} \chi_{i,h}(\hat{\blam}^{(r)}, \overline{\Y}^{(r)}, \hat{\boldsymbol{\sigma}}^{2,(r)}) > 0$ and thus $\mathop{\lim \inf}_{r \to \infty} \hat{\alpha}_{i,h}>0$, which is equivalently to $\hat{\alpha}_{i,h} = \Theta(1)$, almost surely.
\end{proof}

Next, we begin to show Theorem 5 of the main paper. Since $n_{i,h} \to \infty$ in CR\&S Algorithm 2 for all $i$ and $h$, we have $\widehat{\bbeta}_i \to \bbeta_i$,  $\bar{Y}_{i}(\x_h^\circ) \to y_i(\x_h^\circ)$ and $\hat{\sigma}_{i}^2(\x_h^\circ) \to \sigma_{i}^2(\x_h^\circ)$ as $r \to \infty$. In the following, we always assume $r \ge r_0$ is large enough such that all estimates will not deviate from their true values by more than $\varepsilon_0$. Note that given $(\boldsymbol{y}, \boldsymbol{\sigma}^2)$ (or $(\overline{\Y}^{(r-1)}, \hat{\boldsymbol{\sigma}}^{2,(r-1)})$), we can show that $a(\blam, \boldsymbol{y}, \boldsymbol{\sigma}^2)$ (or $a(\blam, \overline{\Y}^{(r-1)}, \hat{\boldsymbol{\sigma}}^{2,(r-1)})$) is convex because for $0 \le p_1 \le 1$, we have $\Big(\sqrt{\chi_{i,h}\big(p_1 \blam_1 + (1-p_1) \blam_2, \boldsymbol{y}, \boldsymbol{\sigma}^2\big)}\Big)^2 \ge \Big(p_1\sqrt{\chi_{i,h}( \blam_1, \boldsymbol{y}, \boldsymbol{\sigma}^2)} + (1-p_1) \sqrt{\chi_{i,h}( \blam_2, \boldsymbol{y}, \boldsymbol{\sigma}^2)}\Big)^2$. Moreover, if $\blam_1$ and $\blam_2$ are two different stationary points to (10) of the main paper, then
\begin{align}\label{eq:chiopt}
	\chi_{i,h}( \blam_1, \boldsymbol{y}, \boldsymbol{\sigma}^2) = \chi_{i,h}( \blam_2, \boldsymbol{y}, \boldsymbol{\sigma}^2)
\end{align}  
for $i=1,\dots,k$ and $h=1,\dots,p$. The reason is as follows. Suppose there exist $i_0$ and $h_0$ such that $\chi_{i_0,h_0}( \blam_1, \boldsymbol{y}, \boldsymbol{\sigma}^2) \ne \chi_{i_0,h_0}( \blam_2, \boldsymbol{y}, \boldsymbol{\sigma}^2)$. Then for $0 < p_1 < 1$,
{\small\begin{align*}
		&\Big(\sqrt{\chi_{i_0,h_0}\big(p_1 \blam_1 + (1-p_1) \blam_2, \boldsymbol{y}, \boldsymbol{\sigma}^2\big)}\Big)^2 - \Big(p_1\sqrt{\chi_{i_0,h_0}( \blam_1, \boldsymbol{y}, \boldsymbol{\sigma}^2)} + (1-p_1) \sqrt{\chi_{i_0,h_0}( \blam_2, \boldsymbol{y}, \boldsymbol{\sigma}^2)}\Big)^2  \\
		=& p_1 \chi_{i_0,h_0}\big( \blam_1 , \boldsymbol{y}, \boldsymbol{\sigma}^2\big) + (1-p_1) \chi_{i_0,h_0}\big(   \blam_2, \boldsymbol{y}, \boldsymbol{\sigma}^2\big) - p_1^2 \chi_{i_0,h_0}\big( \blam_1 , \boldsymbol{y}, \boldsymbol{\sigma}^2\big) - (1-p_1)^2 \chi_{i_0,h_0}\big(   \blam_2, \boldsymbol{y}, \boldsymbol{\sigma}^2\big) \\
		&- 2 p_1 (1-p_1) \sqrt{\chi_{i_0,h_0}( \blam_1, \boldsymbol{y}, \boldsymbol{\sigma}^2)}  \sqrt{\chi_{i_0,h_0}( \blam_2, \boldsymbol{y}, \boldsymbol{\sigma}^2)}\\
		=& p_1(1-p_1) \left( \sqrt{\chi_{i_0,h_0}\big( \blam_1 , \boldsymbol{y}, \boldsymbol{\sigma}^2\big)} - \sqrt{\chi_{i_0,h_0}( \blam_2, \boldsymbol{y}, \boldsymbol{\sigma}^2)} \right)^2 
		> 0,
\end{align*}}
which means 
\begin{align*}
	&a(p_1 \blam_1 + (1-p_1) \blam_2, \boldsymbol{y}, \boldsymbol{\sigma}^2) =  -\sum_{i=1}^k \sum_{h=1}^p \sqrt{ \chi_{i,h}(p_1 \blam_1 + (1-p_1) \blam_2, \boldsymbol{y}, \boldsymbol{\sigma}^2)  }  \\
	&< -p_1 \sum_{i=1}^k \sum_{h=1}^p \sqrt{ \chi_{i,h}(\blam_1 , \boldsymbol{y}, \boldsymbol{\sigma}^2)  } -(1-p_1) \sum_{i=1}^k \sum_{h=1}^p \sqrt{ \chi_{i,h}(\blam_2, \boldsymbol{y}, \boldsymbol{\sigma}^2)  }  \\
	&= p_1 a(\blam_1, \boldsymbol{y}, \boldsymbol{\sigma}^2) + (1-p_1) a( \blam_2, \boldsymbol{y}, \boldsymbol{\sigma}^2)  
	= a(\blam_1, \boldsymbol{y}, \boldsymbol{\sigma}^2)
\end{align*}
where the last equality holds because $\blam_1$ and $\blam_2$ are both stationary points to the convex program (10) of the main paper such that $a(\blam_1, \boldsymbol{y}, \boldsymbol{\sigma}^2) = a(\blam_2, \boldsymbol{y}, \boldsymbol{\sigma}^2)$. However, $a(p_1 \blam_1 + (1-p_1) \blam_2, \boldsymbol{y}, \boldsymbol{\sigma}^2) < a(\blam_1, \boldsymbol{y}, \boldsymbol{\sigma}^2)$ is contradictory to the assumption that $\blam_1$ is a stationary point to the convex program (10). 

Let $a^*$ denote the optimal value of (10) and  $\boldsymbol{\chi}^* \triangleq (\chi^*_{1,1},\dots,\chi^*_{1,p},\chi^*_{2,1},\dots,\chi^*_{2,p},\dots,\chi^*_{k,1},$ $\dots,\chi^*_{k,p})^\top$ denote the value of $\boldsymbol{\chi} (\blam_1, \boldsymbol{y}, \boldsymbol{\sigma}^2) \triangleq (\chi_{1,1}(\blam_1, \boldsymbol{y}, \boldsymbol{\sigma}^2),\dots,\chi_{1,p}(\blam_1, \boldsymbol{y}, \boldsymbol{\sigma}^2),\chi_{2,1}(\blam_1, \boldsymbol{y}, \boldsymbol{\sigma}^2),\dots,\chi_{2,p}(\blam_1, $ $\boldsymbol{y}, \boldsymbol{\sigma}^2),\dots,\chi_{k,1}(\blam_1, \boldsymbol{y}, \boldsymbol{\sigma}^2),\dots,\chi_{k,p}($ $ \blam_1, \boldsymbol{y}, \boldsymbol{\sigma}^2))^\top$ where $\blam_1$ is a stationary point to (10). Let $\Xi$ denote the value space of $\boldsymbol{\chi} (\blam, \boldsymbol{y}, \boldsymbol{\sigma}^2)$ where $\blam$ satisfies the constraints of (10). For any small enough $\varepsilon > 0$, let $\Xi_{\varepsilon} = \{ \boldsymbol{\chi} (\blam, \boldsymbol{y}, \boldsymbol{\sigma}^2) \in \Xi: \| \boldsymbol{\chi} (\blam, \boldsymbol{y}, \boldsymbol{\sigma}^2) - \boldsymbol{\chi}^* \|_{\infty} \ge \varepsilon   \}$. Note that by \eqref{eq:chiopt}, $a(\blam, \boldsymbol{y}, \boldsymbol{\sigma}^2)$ achieves the minimal value $a^*$ when $\blam$ satisfies $\boldsymbol{\chi} (\blam, \boldsymbol{y}, \boldsymbol{\sigma}^2) = \boldsymbol{\chi}^*$. If there exists a sequence of $\blam$ satisfying each  $\boldsymbol{\chi} (\blam, \boldsymbol{y}, \boldsymbol{\sigma}^2) \in \Xi_{\varepsilon}$ such that the corresponding sequence of $a(\blam, \boldsymbol{y}, \boldsymbol{\sigma}^2)$ converges to $a^*$, then there must exist a $\blam^\dag$ satisfying  $\boldsymbol{\chi} (\blam^\dag, \boldsymbol{y}, \boldsymbol{\sigma}^2) \in \Xi_{\varepsilon}$ and $a(\blam^\dag, \boldsymbol{y}, \boldsymbol{\sigma}^2) = a^*$ because $\Xi_{\varepsilon}$ is closed and bounded. This leads to contradiction because by \eqref{eq:chiopt}, $\boldsymbol{\chi} (\blam^\dag, \boldsymbol{y}, \boldsymbol{\sigma}^2) = \boldsymbol{\chi}^* $ should hold but $\boldsymbol{\chi}^* \notin \Xi_{\varepsilon}$. Thus, $a^*_{\varepsilon} > a^*$ where $a^*_{\varepsilon} = \min\{a(\blam, \boldsymbol{y}, \boldsymbol{\sigma}^2): \ \blam \text{ satisfies }  \boldsymbol{\chi} (\blam, \boldsymbol{y}, \boldsymbol{\sigma}^2) \in \Xi_{\varepsilon}\}$. We will show that for any $\varepsilon$ small enough, the $\boldsymbol{\chi} (\hat{\blam}^{(r)}, \boldsymbol{y}, \boldsymbol{\sigma}^2)$ of $\hat{\blam}^{(r)}$ of CR\&S Algorithm 2 will be in $\Xi \setminus \Xi_{\varepsilon} = \{ \boldsymbol{\chi} (\blam, \boldsymbol{y}, \boldsymbol{\sigma}^2) \in \Xi: \| \boldsymbol{\chi} (\blam, \boldsymbol{y}, \boldsymbol{\sigma}^2) - \boldsymbol{\chi}^* \|_{\infty} < \varepsilon   \}$ for $r$ large enough. To have the above result, we need two facts: a) $a(\hat{\blam}^{(r)},\boldsymbol{y}, \boldsymbol{\sigma}^2)$ decreases with $r$ when $r$ is large enough and b) there exists a subsequence of $\{\hat{\blam}^{(r)}, r=1,2,\dots\}$ that converges to a stationary point of problem (10) such that $a(\hat{\blam}^{(r)},\boldsymbol{y}, \boldsymbol{\sigma}^2) < a^*_{\varepsilon}$ for $r$ large enough.

By Step 4 of CR\&S Algorithm 2, $\hat{\blam}^{(r)}$ will be updated only when $W^{(r)}< \max\{ -\kappa_0, -(\frac{\log r}{r})^{1/4} \}$ and $\sfrak^{\max}(\dfrak^{(r)},\hat{\blam}^{(r-1)}) W^{(r)}< \max\{ -\kappa_0, -(\frac{\log r}{r})^{1/2} \}$, where we let $W^{(r)} = \nabla a(\hat{\blam}^{(r-1)}, \overline{\Y}^{(r-1)}, \hat{\boldsymbol{\sigma}}^{2,(r-1)})^\top \dfrak^{(r)}$ for notation simplicity. Suppose $\hat{\blam}^{(r)}$ is updated at iteration $r$.
By \eqref{ineq:lowbdalgo2} of Lemma \ref{lem:lincons}, we know that $\mathop{\lim \inf}_{r \to \infty} \chi_{i,h}(\hat{\blam}^{(r)},$ $ \overline{\Y}^{(r)}, \hat{\boldsymbol{\sigma}}^{2,(r)}) > 0$ for all $i=1,\dots,k$ and $h=1,\dots,p$. Let $C_{\chi} > 0$ denote the lower bound of $ \chi_{i,h}(\hat{\blam}^{(r)}, \overline{\Y}^{(r)}, \hat{\boldsymbol{\sigma}}^{2,(r)})$, $i=1,\dots,k$, $h=1,\dots,p$ for $r$ large enough. Then,
\begin{align*}
	&\left| [\nabla a(\hat{\blam}_1, \overline{\Y}^{(r-1)}, \hat{\boldsymbol{\sigma}}^{2,(r-1)})]_{i_1,j_1} - [\nabla a(\hat{\blam}_2, \overline{\Y}^{(r-1)}, \hat{\boldsymbol{\sigma}}^{2,(r-1)})]_{i_1,j_1} \right|  \\
	=& \left| -\sum_{(i,h) \in \Ccal_{(i_1, j_1)}} \varsigma_{i_1, j_1, i, h } (\hat{\blam}_1,\overline{\Y}^{(r)}, \hat{\boldsymbol{\sigma}}^{2,(r)}) + \sum_{(i,h) \in \Ccal_{(i_1, j_1)}} \varsigma_{i_1, j_1, i, h } (\hat{\blam}_2,\overline{\Y}^{(r)}, \hat{\boldsymbol{\sigma}}^{2,(r)}) \right|  \\
	\le& \sum_{(i,h) \in \Ccal_{(i_1, j_1)}} \left|  \frac{ \hat{\sigma}_{i}^2(\x_h^\circ) \psi_{i_1,j_1,i^*(\x_{j_1}),h}(\overline{\Y}^{(r)})}{2\sqrt{ \chi_{i,h}(\hat{\blam}_1, \overline{\Y}^{(r)}, \hat{\boldsymbol{\sigma}}^{2,(r)} )  }} - \frac{ \hat{\sigma}_{i}^2(\x_h^\circ) \psi_{i_1,j_1,i^*(\x_{j_1}),h}(\overline{\Y}^{(r)})}{2\sqrt{ \chi_{i,h}(\hat{\blam}_2, \overline{\Y}^{(r)}, \hat{\boldsymbol{\sigma}}^{2,(r)} )  }} \right|  \\
	\le& \sum_{(i,h) \in \Ccal_{(i_1, j_1)}} \frac{(\psi_{\max} + \varepsilon) (\bar{\sigma}^2_{\max} + \varepsilon) }{2}  \left|  \frac{ 1}{\sqrt{ \chi_{i,h}(\hat{\blam}_1, \overline{\Y}^{(r)}, \hat{\boldsymbol{\sigma}}^{2,(r)} )  }} - \frac{ 1}{\sqrt{ \chi_{i,h}(\hat{\blam}_2, \overline{\Y}^{(r)}, \hat{\boldsymbol{\sigma}}^{2,(r)} )  }} \right|  \\
	\le& \sum_{(i,h) \in \Ccal_{(i_1, j_1)}} \frac{(\psi_{\max} + \varepsilon) (\bar{\sigma}^2_{\max} + \varepsilon) }{4 C_{\chi}^{3/2}}  \left| \chi_{i,h}(\hat{\blam}_2, \overline{\Y}^{(r)}, \hat{\boldsymbol{\sigma}}^{2,(r)} )  - \chi_{i,h}(\hat{\blam}_1, \overline{\Y}^{(r)}, \hat{\boldsymbol{\sigma}}^{2,(r)} ) \right|    \\
	\le&  \sum_{(i,h) \in \Ccal_{(i_1, j_1)}} \frac{(\psi_{\max} + \varepsilon) (\bar{\sigma}^2_{\max} + \varepsilon) }{4 C_{\chi}^{3/2}} \sum\limits_{(i_1,j_1) \in \bar{\Ccal}_{i,h}} \left| \hat{\sigma}_{i,r}^{2}(\x_h^\circ)\psi_{i_1,j_1,\bar{i}^*(\x_{j_1}),h}(\overline{\Y}^{(r)}) \left(\hat{\lambda}_{(i_1,j_1),2} - \hat{\lambda}_{(i_1,j_1),1} \right)  \right|  \\
	\le& \sum_{(i,h) \in \Ccal_{(i_1, j_1)}} \frac{(\psi_{\max} + \varepsilon)^2 (\bar{\sigma}^2_{\max} + \varepsilon)^2 }{4 C_{\chi}^{3/2}} \| \hat{\blam}_1 - \hat{\blam}_2\|_1 \triangleq C_{\varsigma} \| \hat{\blam}_1 - \hat{\blam}_2\|_1.
\end{align*}
Thus, the second order derivative of $a(\hat{\blam}, \overline{\Y}^{(r-1)}, \hat{\boldsymbol{\sigma}}^{2,(r-1)})$ for $\hat{\lambda}_{i,j}$ is bounded. By Taylor's theorem, we have
$a(\hat{\blam}^{(r-1)} + \sfrak^{(r)} \dfrak^{(r)}, \overline{\Y}^{(r-1)}, \hat{\boldsymbol{\sigma}}^{2,(r-1)})
\le a(\hat{\blam}^{(r-1)} , \overline{\Y}^{(r-1)}, \hat{\boldsymbol{\sigma}}^{2,(r-1)}) + \sfrak^{(r)} W^{(r)} + \frac{ (\sfrak^{(r)})^2 C_{\varsigma} }{2} \|  \dfrak^{(r)} \|^2_2.
$
To satisfy the condition $a(\hat{\blam}^{(r-1)}+\sfrak^{(r)} \cdot \dfrak^{(r)},\overline{\Y}^{(r-1)}, \hat{\boldsymbol{\sigma}}^{2, (r-1)}) \le a(\hat{\blam}^{(r-1)},\overline{\Y}^{(r-1)}, \hat{\boldsymbol{\sigma}}^{2,(r-1)}) +\sfrak_1 \sfrak^{(r)} W^{(r)}$ of Algorithm 3 of the main paper, a sufficient condition is
\begin{align*}
	a(\hat{\blam}^{(r-1)} , \overline{\Y}^{(r-1)}, \hat{\boldsymbol{\sigma}}^{2,(r-1)}) + \sfrak^{(r)} W^{(r)} + \frac{ (\sfrak^{(r)})^2 C_{\varsigma} }{2} \|  \dfrak^{(r)} \|^2_2
	\le a(\hat{\blam}^{(r-1)},\overline{\Y}^{(r-1)}, \hat{\boldsymbol{\sigma}}^{2,(r-1)}) +\sfrak_1 \sfrak^{(r)} W^{(r)},
\end{align*}
which yields
$
(1-\sfrak_1)\sfrak^{(r)} W^{(r)} + \frac{ (\sfrak^{(r)})^2 C_{\varsigma} }{2} \|  \dfrak^{(r)} \|^2_2 \le 0.
$
Note that $\|  \dfrak^{(r)} \|^2_2 = 2$ because $\dfrak^{(r)}$ is a vector that has one element equal to one, one element equal to minus one, and other elements equal to zero. Thus, it is sufficient to have $\sfrak^{(r)} \le \frac{(\sfrak_1-1) W^{(r)} }{C_\varsigma}$.

Define $\sfrak_U^{(r)} = \frac{(\sfrak_1-1) W^{(r)} }{C_\varsigma}$ and $\sfrak_*^{(r)} = \arg\min_{\sfrak} a(\hat{\blam}^{(r-1)}+\sfrak \dfrak^{(r)},\overline{\Y}^{(r-1)}, \hat{\boldsymbol{\sigma}}^{2,(r-1)})$. Let $\sfrak_D^{(r)} = \max \{\sfrak : \nabla a(\hat{\blam}^{(r-1)}+\sfrak\cdot \dfrak^{(r)},\overline{\Y}^{(r-1)}, \hat{\boldsymbol{\sigma}}^{2,(r-1)})^\top \dfrak^{(r)}  \le \sfrak_2 |W^{(r)}|\}$. By the convexity of $a(\blam,\overline{\Y}^{(r-1)}, \hat{\boldsymbol{\sigma}}^{2,(r-1)})$, we have $\sfrak_*^{(r)} \le \sfrak_D^{(r)}$. Next, we discuss by cases.
\begin{enumerate}
	\item[(1)] If $\sfrak^{\max}(\dfrak^{(r)},\hat{\blam}^{(r-1)},\overline{\Y}^{(r-1)}, \hat{\boldsymbol{\sigma}}^{2,(r-1)}) \le \min\{ \sfrak_U^{(r)}, \sfrak_D^{(r)} \}$, we have $\sfrak^{(r)} = \sfrak^{\max}(\dfrak^{(r)},\hat{\blam}^{(r-1)})$ and by the definition of  $\sfrak_U^{(r)}$,
	$a(\hat{\blam}^{(r-1)},\overline{\Y}^{(r-1)}, \hat{\boldsymbol{\sigma}}^{2,(r-1)}) - a(\hat{\blam}^{(r)},\overline{\Y}^{(r-1)}, \hat{\boldsymbol{\sigma}}^{2,(r-1)})
	\ge -\sfrak_1 \sfrak^{\max}(\dfrak^{(r)},\hat{\blam}^{(r-1)}) W^{(r)}.
	$
	
	\item[(2)] If $\sfrak^{\max}(\dfrak^{(r)},\hat{\blam}^{(r-1)}) > \sfrak_D^{(r)} \ge \sfrak_U^{(r)}$, then $\sfrak^{(r)} \ge \tau \sfrak_U^{(r)}$ must hold. Thus,
	$
	a(\hat{\blam}^{(r-1)},\overline{\Y}^{(r-1)}, \hat{\boldsymbol{\sigma}}^{2,(r-1)}) - a(\hat{\blam}^{(r)},\overline{\Y}^{(r-1)}, \hat{\boldsymbol{\sigma}}^{2,(r-1)})
	\ge -\sfrak_1 \sfrak^{(r)}  W^{(r)}
	\ge \frac{ \tau \sfrak_1 (1-\sfrak_1) (W^{(r)})^2 }{C_\varsigma}.
	$
	
	\item[(3)] If $\sfrak^{\max}(\dfrak^{(r)},\hat{\blam}^{(r-1)}) > \sfrak_U^{(r)} \ge \sfrak_D^{(r)} $, then $\tau \sfrak_*^{(r)} \le \tau \sfrak_D^{(r)} \le \sfrak^{(r)} \le \sfrak_D^{(r)} \le \sfrak_U^{(r)}$ must hold. Note that $a(\blam,\overline{\Y}^{(r-1)}, \hat{\boldsymbol{\sigma}}^{2,(r-1)})$ is convex,
	\begin{align*}
		&a(\hat{\blam}^{(r)},\overline{\Y}^{(r-1)}, \hat{\boldsymbol{\sigma}}^{2,(r-1)}) = a(\hat{\blam}^{(r-1)}+\sfrak^{(r)} \dfrak^{(r)},\overline{\Y}^{(r-1)}, \hat{\boldsymbol{\sigma}}^{2,(r-1)})  \\
		\le& \max \left\{ a(\hat{\blam}^{(r-1)}+\tau \sfrak_*^{(r)} \dfrak^{(r)},\overline{\Y}^{(r-1)}, \hat{\boldsymbol{\sigma}}^{2,(r-1)}), a(\hat{\blam}^{(r-1)}+\sfrak_U^{(r)} \dfrak^{(r)},\overline{\Y}^{(r-1)}, \hat{\boldsymbol{\sigma}}^{2,(r-1)}) \right\}.
	\end{align*}
	By the definition of $\sfrak_U^{(r)}$,
	$ a(\hat{\blam}^{(r-1)},\overline{\Y}^{(r-1)}, \hat{\boldsymbol{\sigma}}^{2,(r-1)}) - a(\hat{\blam}^{(r-1)}+ \sfrak_U^{(r)} \dfrak^{(r)},\overline{\Y}^{(r-1)}, \hat{\boldsymbol{\sigma}}^{2,(r-1)})
	\ge -\sfrak_1 \sfrak_U^{(r)} W^{(r)}
	= \frac{\sfrak_1 (1-\sfrak_1) (W^{(r)})^2}{C_\varsigma}  .
	$
	Meanwhile, by the convexity,  $a(\hat{\blam}^{(r-1)}+\tau \sfrak_*^{(r)} \dfrak^{(r)},\overline{\Y}^{(r-1)}, \hat{\boldsymbol{\sigma}}^{2,(r-1)}) $ $\le (1-\tau) a(\hat{\blam}^{(r-1)},\overline{\Y}^{(r-1)}, \hat{\boldsymbol{\sigma}}^{2,(r-1)}) + \tau a(\hat{\blam}^{(r-1)}+ \sfrak_*^{(r)} \dfrak^{(r)},\overline{\Y}^{(r-1)}, \hat{\boldsymbol{\sigma}}^{2,(r-1)})$. Then,
	\begin{align*}
		&a(\hat{\blam}^{(r-1)},\overline{\Y}^{(r-1)}, \hat{\boldsymbol{\sigma}}^{2,(r-1)}) - a(\hat{\blam}^{(r-1)}+\tau \sfrak_*^{(r)} \dfrak^{(r)},\overline{\Y}^{(r-1)}, \hat{\boldsymbol{\sigma}}^{2,(r-1)})  \\
		\ge& \tau \left( a(\hat{\blam}^{(r-1)},\overline{\Y}^{(r-1)}, \hat{\boldsymbol{\sigma}}^{2,(r-1)}) - a(\hat{\blam}^{(r-1)}+ \sfrak_*^{(r)} \dfrak^{(r)},\overline{\Y}^{(r-1)}, \hat{\boldsymbol{\sigma}}^{2,(r-1)}) \right)  \\
		\ge& \tau \left( a(\hat{\blam}^{(r-1)},\overline{\Y}^{(r-1)}, \hat{\boldsymbol{\sigma}}^{2,(r-1)}) - a(\hat{\blam}^{(r-1)}+ \sfrak_U^{(r)} \dfrak^{(r)},\overline{\Y}^{(r-1)}, \hat{\boldsymbol{\sigma}}^{2,(r-1)}) \right)
		\ge \tau \frac{\sfrak_1 (1-\sfrak_1) (W^{(r)})^2}{C_\varsigma}
	\end{align*}
	Thus,
	$ a(\hat{\blam}^{(r-1)},\overline{\Y}^{(r-1)}, \hat{\boldsymbol{\sigma}}^{2,(r-1)}) - a(\hat{\blam}^{(r)},\overline{\Y}^{(r-1)}, \hat{\boldsymbol{\sigma}}^{2,(r-1)})
	\ge   \frac{\tau \sfrak_1 (1-\sfrak_1) (W^{(r)})^2 }{C_\varsigma}.
	$	
\end{enumerate}
Summarizing the results of the above three cases, we have
\begin{align}
	& a(\hat{\blam}^{(r-1)},\overline{\Y}^{(r-1)}, \hat{\boldsymbol{\sigma}}^{2,(r-1)}) - a(\hat{\blam}^{(r)},\overline{\Y}^{(r-1)}, \hat{\boldsymbol{\sigma}}^{2,(r-1)})  \nonumber \\
	\ge & \min \Bigg\{  \frac{\tau \sfrak_1 (1-\sfrak_1) (W^{(r)})^2 }{C_\varsigma},   -\sfrak_1 \sfrak^{\max}(\dfrak^{(r)},\hat{\blam}^{(r-1)}) W^{(r)}  \Bigg\}.  \label{ineq:lwbd_lin_conv}
\end{align}
Note that $W^{(r)} < \max\{ -\kappa_0, -(\frac{\log r}{r})^{1/4} \}$ and $\sfrak^{\max}(\dfrak^{(r)},\hat{\blam}^{(r-1)}) W^{(r)} < \max\{ -\kappa_0, -(\frac{\log r}{r})^{1/2} \}$ because we assume that $\hat{\blam}^{(r-1)}$ is updated at iteration $r$. We have by \eqref{ineq:lwbd_lin_conv} that
{\small\begin{align}\label{ineq:lwbd_lin_conv2}
		a(\hat{\blam}^{(r-1)},\overline{\Y}^{(r-1)}, \hat{\boldsymbol{\sigma}}^{2,(r-1)}) - a(\hat{\blam}^{(r)},\overline{\Y}^{(r-1)}, \hat{\boldsymbol{\sigma}}^{2,(r-1)})  \ge \min \left\{ \frac{\tau \sfrak_1 (1-\sfrak_1) (\frac{\log r}{r})^{\frac{1}{2}} }{C_\varsigma}, \sfrak_1 \left( \frac{\log r}{r} \right)^{\frac{1}{2}} \right\}.
\end{align}}

Then, we can show that $a(\hat{\blam}^{(r-1)},\boldsymbol{y}, \boldsymbol{\sigma}^2) - a(\hat{\blam}^{(r)},\boldsymbol{y}, \boldsymbol{\sigma}^2) > 0$ as follows.
\begin{align*}
	&a(\hat{\blam}^{(r-1)},\boldsymbol{y}, \boldsymbol{\sigma}^2) - a(\hat{\blam}^{(r)},\boldsymbol{y}, \boldsymbol{\sigma}^2)
	=a(\hat{\blam}^{(r-1)},\boldsymbol{y}, \boldsymbol{\sigma}^2) - a(\hat{\blam}^{(r-1)},\overline{\Y}^{(r-1)}, \hat{\boldsymbol{\sigma}}^{2,(r-1)}) \\
	&+ a(\hat{\blam}^{(r-1)},\overline{\Y}^{(r-1)}, \hat{\boldsymbol{\sigma}}^{2,(r-1)}) - a(\hat{\blam}^{(r)},\overline{\Y}^{(r-1)}, \hat{\boldsymbol{\sigma}}^{2,(r-1)}) + a(\hat{\blam}^{(r)},\overline{\Y}^{(r-1)}, \hat{\boldsymbol{\sigma}}^{2,(r-1)}) - a(\hat{\blam}^{(r)},\boldsymbol{y}, \boldsymbol{\sigma}^2).
\end{align*}
By the continuity of $a(\blam,\boldsymbol{y}, \boldsymbol{\sigma}^2)$ in $(\boldsymbol{y}, \boldsymbol{\sigma}^2)$, we have $\left| a(\blam,\boldsymbol{y}, \boldsymbol{\sigma}^2) - a(\blam,\overline{\Y}^{(r-1)}, \hat{\boldsymbol{\sigma}}^{2,(r-1)}) \right| \le C_{a} ( \|\boldsymbol{y}-\overline{\Y}^{(r-1)} \|_1 + \|\boldsymbol{\sigma}^2-\hat{\boldsymbol{\sigma}}^{2,(r-1)} \|_1 ).$ Since $\hat{\alpha}_{i,h} = \Theta(1)$, by the law of iterated logarithm, we have that $\left\|\boldsymbol{y}-\overline{\Y}^{(r-1)}\right\|_1 \le O \left( \sqrt{ (\log \log r) r^{-1} } \right)$ and $\left\|\boldsymbol{\sigma}^2-\hat{\boldsymbol{\sigma}}^{2,(r-1)}\right\|_1 \le O \left( \sqrt{ (\log \log r) r^{-1} } \right)$.
Thus, $|a(\hat{\blam}^{(r-1)},\boldsymbol{y}, \boldsymbol{\sigma}^2) - a(\hat{\blam}^{(r-1)},\overline{\Y}^{(r-1)}, \hat{\boldsymbol{\sigma}}^{2,(r-1)})|$ and $|a(\hat{\blam}^{(r)},\overline{\Y}^{(r-1)}, \hat{\boldsymbol{\sigma}}^{2,(r-1)}) - a(\hat{\blam}^{(r)},\boldsymbol{y}, \boldsymbol{\sigma}^2) |$ are of order $O \left( \sqrt{(\log \log r) r^{-1}} \right)$. Combining this order with \eqref{ineq:lwbd_lin_conv2}, we have $a(\hat{\blam}^{(r-1)},\boldsymbol{y}, \boldsymbol{\sigma}^2) - a(\hat{\blam}^{(r)},\boldsymbol{y}, \boldsymbol{\sigma}^2)$ $> 0$ for $r$ sufficiently large.

Note that $\hat{\blam}^{(r)}$ either remains unchanged or is updated by $\hat{\blam}^{(r)} = \hat{\blam}^{(r-1)} + \sfrak^{(r)} \dfrak^{(r)}$. Thus,  $a(\hat{\blam}^{(r)},\boldsymbol{y}, \boldsymbol{\sigma}^2)$ decreases with $r$. Combining this monotone property with the fact that $a(\hat{\blam}^{(r)},\boldsymbol{y}, \boldsymbol{\sigma}^2)$ is continuous in $\hat{\blam}^{(r)}$ and bounded, we have $a(\hat{\blam}^{(r)},\boldsymbol{y}, \boldsymbol{\sigma}^2)$ converges to some limiting point.

Denote the limiting point of $\hat{\blam}^{(r)}$ as $\blam^*$. Next, we show by contradiction that $\blam^*$ is a stationary point. By Lemma \ref{lem:ref}, it is sufficient to prove $\nabla a(\blam^*,\boldsymbol{y}, \boldsymbol{\sigma}^2)^\top \dfrak \ge 0$ for any feasible direction $\dfrak \in \Dcal^{(i,j)}(\blam^*)$ such that $\lambda^*_{i,j} \ge \eta$. Suppose a feasible direction $\dfrak_0 \in \Dcal^{(i,j)}(\blam^*)$ satisfies  $\nabla a(\blam^*,\boldsymbol{y}, \boldsymbol{\sigma}^2)^\top \dfrak_0 < 0$. We analyze the subsequence of $\{\hat{\blam}^{(r)}, r=1,2,\dots\}$ for which $(i,j)$ is chosen in Step 2 of CR\&S Algorithm 2 and $\hat{\blam}^{(r)}$ converges to $\blam^*$. By the continuity, there exists $c_{\dfrak} > 0$ such that $\nabla a(\hat{\blam}^{(r)},\overline{\Y}^{(r-1)}, \hat{\boldsymbol{\sigma}}^{2,(r-1)})^\top \dfrak_0 < -c_{\dfrak} <  0$ for all $r$ large enough. Moreover, since $\dfrak_0 \in \Dcal^{(i,j)}(\blam^*)$,  we have $\sfrak^{\max}(\dfrak_0,\blam^*) > 0$ and there exists a lower bound $c_{\sfrak} > 0$ such that $\sfrak^{\max}(\dfrak_0,\hat{\blam}^{(r-1)}) \ge c_{\sfrak}$ for all $r$ large enough. Thus,
\begin{align}\label{ineq:gradless}
	\sfrak^{\max}(\dfrak_0,\hat{\blam}^{(r-1)}) \nabla a(\hat{\blam}^{(r-1)},\overline{\Y}^{(r-1)}, \hat{\boldsymbol{\sigma}}^{2,(r-1)})^\top \dfrak_0 \le - c_{\sfrak} c_{\dfrak} < 0.
\end{align}
We analyze it by two cases.
\begin{itemize}
	\item[(1)]  Suppose for any $r_0 > 0$, we can find $r> r_0$ such that $(i,j)$ is chosen in Step 2 of CR\&S Algorithm 2 and $\hat{\blam}^{(r)}$ is updated. By condition (13) of Algorithm 3, $a(\hat{\blam}^{(r-1)},\overline{\Y}^{(r-1)}, \hat{\boldsymbol{\sigma}}^{2,(r-1)}) - a(\hat{\blam}^{(r)},\overline{\Y}^{(r-1)}, \hat{\boldsymbol{\sigma}}^{2,(r-1)}) \ge -\sfrak_1 \sfrak^{(r)} W^{(r)} $. By definition of $\dfrak^{(r)}$, we have $\sfrak^{\max}(\dfrak^{(r)},\hat{\blam}^{(r-1)}) W^{(r)} \le \sfrak^{\max}(\dfrak_0,\hat{\blam}^{(r-1)}) \nabla a(\hat{\blam}^{(r-1)},\overline{\Y}^{(r-1)}, \hat{\boldsymbol{\sigma}}^{2,(r-1)})^\top \dfrak_0 \le - c_{\sfrak} c_{\dfrak}$. Then $W^{(r)} \le \frac{-c_{\sfrak} c_{\dfrak}}{ \sfrak^{\max}(\dfrak^{(r)},\hat{\blam}^{(r-1)}) } \le -c_{\sfrak} c_{\dfrak}$, which yields
	$
	a(\hat{\blam}^{(r-1)},\overline{\Y}^{(r-1)}, \hat{\boldsymbol{\sigma}}^{2,(r-1)}) - a(\hat{\blam}^{(r)},\overline{\Y}^{(r-1)}, \hat{\boldsymbol{\sigma}}^{2,(r-1)}) \ge -\sfrak_1 \sfrak^{(r)} W^{(r)} \ge \sfrak_1 \sfrak^{(r)} c_{\sfrak} c_{\dfrak}$.
	Since $a(\hat{\blam}^{(r)},\boldsymbol{y}, \boldsymbol{\sigma}^2)$ converges and $a(\hat{\blam}^{(r-1)},\overline{\Y}^{(r-1)}, \hat{\boldsymbol{\sigma}}^{2,(r-1)}) - a(\hat{\blam}^{(r)},\overline{\Y}^{(r-1)},$ $ \hat{\boldsymbol{\sigma}}^{2,(r-1)})$ $\to 0$ as $r \to \infty$, we have  $\sfrak^{(r)} \to 0$ as $r \to \infty$. Note that $\sfrak^{\max}(\dfrak^{(r)},\hat{\blam}^{(r-1)}) \ge c_{\sfrak} > 0$. We have $ \sfrak^{(r)} < \sfrak^{\max}(\dfrak^{(r)},\hat{\blam}^{(r-1)})$ for all $r$ large enough.
	Then Algorithm 3 must have at least one loop and $ \hat{\blam}^{(r-1)} + \frac{\sfrak^{(r)}}{\tau} \dfrak^{(r)}$ violates at least one of conditions (13) and (14) of Algorithm 3. If condition (13) is violated, then
	$
	a(\hat{\blam}^{(r-1)} + \frac{\sfrak^{(r)}}{\tau} \dfrak^{(r)},\overline{\Y}^{(r-1)}, \hat{\boldsymbol{\sigma}}^{2,(r-1)}) - a(\hat{\blam}^{(r-1)},\overline{\Y}^{(r-1)}, \hat{\boldsymbol{\sigma}}^{2,(r-1)}) > \sfrak_1 \frac{\sfrak^{(r)}}{\tau} W^{(r)}$,
	and thus
	\begin{align*}
		\frac{a(\hat{\blam}^{(r-1)} + \frac{\sfrak^{(r)}}{\tau} \dfrak^{(r)},\overline{\Y}^{(r-1)}, \hat{\boldsymbol{\sigma}}^{2,(r-1)}) - a(\hat{\blam}^{(r-1)},\overline{\Y}^{(r-1)}, \hat{\boldsymbol{\sigma}}^{2,(r-1)})}{ \frac{\sfrak^{(r)}}{\tau} } > \sfrak_1  W^{(r)}.
	\end{align*}
	Letting $r \to \infty$, we have $\lim\inf_{r \to \infty} (1-\sfrak_1) \nabla a(\blam^{(r-1)},\boldsymbol{y}, \boldsymbol{\sigma}^2)^\top \dfrak^{(r)} \ge 0$. (Similar arguments can be found in the literature, e.g., Proposition 1.2.1 in \cite{bertsekas1999}.) If condition (14) is violated, then $\nabla a(\hat{\blam}^{(r-1)}+\frac{\sfrak^{(r)}}{\tau}  \dfrak^{(r)},\overline{\Y}^{(r-1)}, \hat{\boldsymbol{\sigma}}^{2,(r-1)})^\top \dfrak^{(r)} > \sfrak_2 |W^{(r)}| \ge 0$, which yields $ \lim\inf_{r \to \infty} \nabla a(\blam^{(r-1)},\boldsymbol{y}, \boldsymbol{\sigma}^2)^\top \dfrak^{(r)} \ge 0$ by noting that $\sfrak^{(r)}$ converges to zero.
	
	\item[(2)] If there is a $r_0$ such that $\hat{\blam}^{(r)}$ is not updated when $(i,j)$ is chosen for all $r > r_0$, we have by Step 4 of CR\&S Algorithm 2 that $W^{(r)} \ge -(\frac{\log r}{r})^{1/4}$ or $W^{(r)} \ge -\frac{1}{c_{\sfrak}} (\frac{\log r}{r})^{1/4}$, which also yields $ \lim\inf_{r \to \infty} \nabla a(\blam^{(r-1)},\boldsymbol{y}, \boldsymbol{\sigma}^2)^\top \dfrak^{(r)}$ $\ge 0$.
\end{itemize}
Summarizing the results of the two cases above, we have $ \lim\inf_{r \to \infty} \nabla a(\blam^{(r-1)},\boldsymbol{y}, \boldsymbol{\sigma}^2)^\top \dfrak^{(r)} \ge 0$. By continuity, $  \lim\inf_{r \to \infty} \sfrak^{\max}(\dfrak^{(r)},\hat{\blam}^{(r-1)}) W^{(r)} = \lim\inf_{r \to \infty} \sfrak^{\max}(\dfrak^{(r)},\hat{\blam}^{(r-1)}) \nabla a(\blam^{(r-1)},\overline{\Y}^{(r-1)},$ $\hat{\boldsymbol{\sigma}}^{2,(r-1)})^\top \dfrak^{(r)} \ge 0$. By definition of $\dfrak^{(r)}$,   $$\mathop{\lim\inf}_{r \to \infty} \sfrak^{\max}(\dfrak_0,\hat{\blam}^{(r-1)}) \nabla a(\hat{\blam}^{(r-1)},\overline{\Y}^{(r-1)}, \hat{\boldsymbol{\sigma}}^{2,(r-1)})^\top \dfrak_0 \ge  \mathop{\lim\inf}_{r \to \infty} \sfrak^{\max}(\dfrak^{(r)},\hat{\blam}^{(r-1)}) W^{(r)} \ge 0,$$ which contradicts the assumption in \eqref{ineq:gradless}. Thus, the limiting point is the stationary point to the convex program (10). Since $a(\hat{\blam}^{(r)},\boldsymbol{y}, \boldsymbol{\sigma}^2)$ decreases with $r$, there exists $r_{\varepsilon}$ such that $a(\hat{\blam}^{(r)},\boldsymbol{y}, \boldsymbol{\sigma}^2) < a_{\varepsilon}$ for $r \ge r_{\varepsilon}$. Then $\boldsymbol{\chi} (\hat{\blam}^{(r)}, \boldsymbol{y}, \boldsymbol{\sigma}^2) \in \Xi \setminus \Xi_{\varepsilon}$. By continuity, there exists $\varepsilon'$ such that the $\hat{\alpha}_{i,h}^*$ calculated at Step 5 of CR\&S Algorithm 2 satisfies $| \hat{\alpha}_{i,h}^* - \alpha_{i,h} | \le \varepsilon'$, $i=1,\dots,k$, $h=1,\dots,p$, for $r \ge r_{\varepsilon}$, where $\alpha_{i,h}$ is the optimal solution of problem (8) of the main paper and $\varepsilon'$ decreases to zero as $\varepsilon$ decreases to zero. Thus, $\hat{\alpha}_{i,h}$ converges to the optimal solution of problem (8).







\bibliographystyle{poms}
\bibliography{reference}

%

\end{document}